\numberwithin{equation}{section}
\theoremstyle{plain}
\newtheorem{theorem}{Theorem}[section]
\newtheorem*{theorem*}{Theorem}
\newtheorem{lemma}[theorem]{Lemma}
\theoremstyle{definition}
\newtheorem*{definition*}{Definition}
\theoremstyle{remark}
\newtheorem{remark}[theorem]{Remark}
\DeclareRobustCommand{\sumstack}[2]{{\begingroup#1\endgroup\@@over#2}}
\newcommand{\E}{\mathbb{E}}
\renewcommand{\P}{\mathbb{P}}
\newcommand{\Q}{\mathbb{Q}}
\newcommand{\di}{\mathrm{d}}
\newcommand{\iden}{\boldsymbol{1}}
\newcommand{\half}{\dfrac{1}{2}}
\newcommand{\shalf}{\frac{1}{2}}
\newcommand{\R}{\mathbb{R}}
\newcommand{\dx}{\mathrm{d}x}
\newcommand{\F}{\mathcal{F}}
\newcommand{\Ep}[1]{\E\left[#1\right]}
\newcommand{\Eps}[2]{\E^{#1}\left[#2\right]}
\newcommand{\Prob}[1]{\P\left(#1\right)}
\newcommand{\Probc}[2]{\P\left(#1| #2\right)}
\newcommand{\tauN}[1]{\tau^N_{#1}}
\newcommand{\tauNk}{\tau^N_k}
\newcommand{\pjt}{p_{j,t}}
\newcommand{\qjt}{q_{j,t}}
\newcommand{\psub}[1]{p_{#1}}
\newcommand{\qsub}[1]{q_{#1}}
\newcommand{\ttau}{\tilde{\tau}}
\newcommand{\e}{\mathrm{e}}
\newcommand{\eps}{\varepsilon}
\newcommand{\core}{\mathrm{core}}
\newcommand{\dom}{\mathrm{dom}}
\newcommand{\cont}{\mathrm{cont}}
\begin{document}

\title{Discretisation and Duality of Optimal Skorokhod Embedding Problems}

\author{Alexander M. G. Cox}
\author{Sam M. Kinsley}
\affil{Department of Mathematical Sciences, University of Bath, U.K.}

\maketitle

\begin{abstract}
We prove a strong duality result for a linear programming problem which has the interpretation of being a discretised optimal Skorokhod embedding problem, and we recover this continuous time problem as a limit of the discrete problems. With the discrete setup we show that for a suitably chosen objective function, the optimiser takes the form of a hitting time for a random walk. In the limiting problem we then reprove the existence of the Root, Rost, and cave embedding solutions of the Skorokhod embedding problem. 

The main strength of this approach is that we can derive properties of the discrete problem more easily than in continuous time, and then prove that these properties hold in the limit. For example, the strong duality result gives dual optimisers, and our limiting arguments can be used to derive properties of the continuous time dual functions, known to represent a superhedging portfolio.
\end{abstract}

\section{Introduction}
For a Brownian motion $B$ and a centered probability distribution $\mu$ on $\R$ with finite second moment, the Skorokhod embedding problem is to find a stopping time $\tau$ such that
\begin{equation} \label{SEP}
B_{\tau}\sim\mu, \text{ and } (B_{t\wedge\tau})_{t\geq0} \text{ is UI.} \tag{SEP}
\end{equation}
The problem was first introduced, and solved, by Skorokhod \cite{Skorokhod:1961aa,Skorokhod:1965aa} and has since proved important for a range of applications in probability theory and mathematical finance. There have been numerous solutions since that of Skorokhod, and we refer the reader to \cite{Obloj:2004aa} for a survey of all solutions known at the time. Many of these solutions also have some nice optimality properties. For example, two embeddings of particular importance in this paper are the so-called Root and Rost embeddings, which are the stopping times with minimal/maximal variance respectively that solve the Skorokhod embedding problem. Both of these stopping times are actually hitting times of regions in space by a Brownian motion, and these are the types of solution we focus on here. In this paper we consider optimal Skorokhod embedding problems of the form
\begin{equation*} \label{OptSEP}\tag{OptSEP}
\sup_{\tau} \Ep{F(W_{\tau},\tau)} \quad \text{over solutions of \eqref{SEP}}
\end{equation*}
for functions $F$ where it is known that the optimiser is a certain type of hitting time. We discretise this problem to get an infinite linear programming problem which has a well-defined dual and where we are able to prove a strong duality result. For the optimal primal solution we can show that the discrete solution is also of the form of a hitting time, and then in the limit we can recover the continuous time embedding results of, for example, the Root and Rost solutions from \cite{Chacon:1985aa,Root:1969aa,Rost:1971aa,Rost:1976aa}. As well as reproving previous results, we can also look at the limit of the dual optimisers and deduce properties of the continuous time dual functions. Indeed, the inspiration for this work was to find the form of the optimal dual solution to an optimal Skorokhod embedding problem which has the financial interpretation of finding an upper bound on the price of a European call option on a leveraged exchange traded fund (LETF), see \cite{Cox:2016aa}. It is well known that the dual of such a problem corresponds to finding the minimum cost of a self-financing model-independent superhedging portfolio of the option, and we will explain in Section \ref{dualsection} how our discrete dual variables relate to this problem.

The stopping time that maximises \eqref{OptSEP} when the function $F$ is the payoff of a European call option on an LETF is a hitting time where the stopping region, called a $K$-cave barrier, introduced in \cite{Cox:2016aa}, is the combination of a Root barrier and a Rost inverse-barrier. A similar embedding is the cave embedding solution of \cite{Beiglboeck:2013aa}, and we will use this an example throughout this paper since it combines the ideas of both the Root and Rost payoffs and gives some insight into the $K$-cave problem.

In \cite{Beiglboeck:2013aa}, the authors use ideas from martingale optimal transport, which is closely tied to the Skorokhod embedding problem, to show a monotonicity principle which can be used to to prove the existence of all known optimal solutions of \eqref{SEP}. The monotonicity principle for an optimal Skorokhod embedding problem links to the idea of $c$-cyclical monotonicity in optimal transport theory and both of these are based on a notion of \emph{path-swapping}. This is also how we prove the form of the stopping times in the discrete problems in this paper, as can be seen in Section \ref{cave}.

The use of discrete systems in model-independent finance is a common theme, see for example \cite{Acciaio:2013aa, Badikov:2016aa,Bayraktar:2015aa, Beiglboeck:2013ab, Bouchard:2015aa, Hobson:2016ab, Hobson:2016aa, Neuberger:2007aa} among others, however the exact nature of our problem, and the passage from discrete to continuous setups, appears to be novel.

The paper is organised as follows: in Section 2 we set up the discrete linear programming problem by discretising \eqref{OptSEP}. The dual problem is introduced in Section 3 and we give ideas on how this relates to the continuous time dual after proving a strong duality result. In Section 4 we show that the discrete optimal stopping region exhibits the same barrier-type properties as its continuous counterpart. Section 5 focuses on the convergence of the discrete problem back to \eqref{OptSEP}. In Section 5.1 we show that we can discretise a feasible solution of \eqref{OptSEP} and then recover the same stopping time in the limit. Finally in Section 5.2 we prove that our discrete stopping region converges to a continuous stopping region with the same properties, and therefore that the a limit of the discrete optimal solutions is an optimiser of \eqref{OptSEP}.

\section{Discretisation and Primal Formulation}
Consider an atomic measure $\mu$ with bounded support, so in particular it has a highest and lowest atom, i.e. $\exists x_*,x^*$ such that $x^*$ is the smallest $x>0$ such that $\mu((x,\infty))=0$, and $x_*<0$ is the largest $x$ such that $\mu((-\infty,x_*))=0$. To embed this distribution into a Brownian motion (BM) with a uniformly integrable stopping time, we know that we will stop immediately if we hit $x^*$ or $x_*$, so we have absorbing barriers at these levels. We can split the interval $[x_*,x^*]$ into a uniform mesh $(x^N_j)_j$, for $j=\{0,1,\ldots,L(N)\}$, where $N$ is a parameter that we let go to infinity to reduce our mesh size and regain the continuous time case. 

Now run a Brownian motion, $W$, stopping every time we hit a level $x_j^N$, and consider the process formed by this. In other words, consider the process $Y_k^N=W_{\tauNk}$, where $\tauN{0}=0$ and if $W_{\tauNk}=x_j^N$ then $\tauN{k+1}=\inf \left\{ t\geq \tauNk : W_t\in \left\{x^N_{j+1}, x^N_{j-1}\right\} \right\}$ for $k\geq0$. We will later use Donsker's theorem to recover our Brownian motion, and to ensure we can do this we need to choose the correct mesh size. We let $x^N_j=\frac{j}{\sqrt{N}}$ for $j\in\mathcal{J}:=\{\lfloor x_*\sqrt{N}\rfloor, \lfloor x_*\sqrt{N}\rfloor+1, \ldots, \lfloor x^*\sqrt{N} \rfloor\}$. Let $j^N_0:=\lfloor x_*\sqrt{N}\rfloor$, $j^N_1:=\lfloor x_*\sqrt{N}\rfloor+1$, $\ldots$, $j^N_L:=\lfloor x^*\sqrt{N} \rfloor$, where $L\sim\sqrt{N}$, so $\mathcal{J}=\{j^N_0,j^N_1,\ldots,j^N_L\}$. We also define $\mathcal{J}'=\{j^N_1,\ldots,j^N_{L-1}\}$, and $\mathcal{J}''=\{j^N_2, \ldots,j^N_{L-2}\}$.

If our Brownian motion has some stopping rule $\tau$, then the discrete process also has some rule $\tilde{\tau}$, defined to be the time $t$ such that $\tau^N_{t-1}<\tau\leq \tau^N_{t}$. We consider the probabilities
\begin{align*}
\pjt^N&=\P\left(Y_t^N=x^N_j, \medspace \tilde{\tau}\geq t+1 \right)\\
\qjt^N&=\P\left(Y_t^N=x^N_j, \medspace \tilde{\tau}=t\right).
\end{align*}
If our continuous process starts at $x^N_{j^*}$, for some $j^*$, then we have that $\psub{j^*,0}=1$, $\psub{j,0}=0$ for any $j\in\mathcal{J}\setminus\{j^*\}$, and $\qsub{j,0}=0$ for all $j$. We also consider absorbing upper and lower barriers, so $p_{j^N_0,t},p_{j^N_L,t}=0$ for all $t$.

If there exists a maximiser $\tau$ of \eqref{OptSEP}, we can discretise a Brownian motion with this stopping rule to create a random walk as above. This random walk will be a martingale and will embed some distribution $\mu^N$, so the $p,q$ associated to this stopping rule will be feasible solutions to the following problem:
\begin{align*}
\mathcal{P}': \medspace \sup_{p,q} & \sum_{\substack{j\in\mathcal{J} \\ t\geq 1}} \bar{F}_{j,t}^N \qjt  \quad \text{over } (\pjt)_{\substack{j\in\mathcal{J}' \\ t\geq 1}} ,(\qjt)_{\substack{j\in\mathcal{J} \\ t\geq 1}}&& \\
\text{subject to  }& \bullet \medspace p_{j,t},q_{j,t}\geq 0, \quad \forall j,t \\
& \bullet \medspace \sum_{t=1}^\infty \qjt = \mu^N(\{x_j^N\}), \quad &&\forall j\in\mathcal{J}\\
& \bullet \medspace \pjt + \qjt = \frac{1}{2}(p_{j-1,t-1} + p_{j+1,t-1}), \quad &&\forall t\geq 2, j\in\mathcal{J}'' \\
& \bullet \medspace p_{j^N_1,t}+q_{j^N_1,t}=\frac{1}{2} p_{j^N_2,t-1}, \quad &&\forall t\geq 2 \\
& \bullet \medspace p_{j^N_{L-1},t}+q_{j^N_{L-1},t}=\frac{1}{2} p_{j^N_{L-2},t-1}, \quad &&\forall t\geq 2 \\
& \bullet \medspace p_{j^*+1,1}+q_{j^*+1,1}=\frac{1}{2}, \quad p_{j^*-1,1}+q_{j^*-1,1}=\frac{1}{2} \\
& \bullet \medspace p_{j,1}+q_{j,1}=0, \quad &&\forall j\neq j^*,j^N_0,j^N_L \\
& \bullet \medspace q_{j^N_0,1}=0=q_{j^N_L,1} \\
& \bullet \medspace q_{j^N_L,t}=\frac{1}{2} p_{j^N_{L-1},t-1}, \quad &&\forall t\geq 2 \\
& \bullet \medspace q_{j^N_0,t}=\frac{1}{2} p_{j^N_1,t-1}, \quad &&\forall t\geq 2.
\end{align*}
If $F$ is the function in \eqref{OptSEP} we are trying to maximise, then $\bar{F}$ will be a discrete version of $F$ chosen so that $\bar{F}^N\left(\lfloor \sqrt{N}x \rfloor, \lfloor Nt \rfloor\right)\rightarrow F(x,t)$.

Note that for two different paths $\omega,\hat{\omega}$, we could have for example $\tau^N_3(\omega) < \tau^N_2(\hat{\omega})$, which means that in physical time, $q_{j,3}$ could be stopping mass before $q_{j,2}$. We will return to this limiting behaviour later, but for know we think instead of $\mathcal{P}'$ as describing the dynamics of a random walk on a fixed grid $(x^N_j,t^N_n)$. The choice of our grid should be such that we can regain a Brownian motion in the limit as $N\rightarrow\infty$, and since $\Ep{\tau^N_k-\tau^N_{k-1}\big| \F_{\tau^N_{k-1}}} = \frac{1}{N}$, we can quickly verify that $t^N_n=\frac{n}{N}$ is the correct time-step choice. 

The aim of this discretisation is to allow us to appeal to primal-dual results in linear programming theory to learn something about the properties of the continuous time primal solution, and also our continuous time  \textquoteleft dual', the superhedging problem. The primal problem $\mathcal{P}'$ is an infinite problem, and so standard strong duality results do not apply. One option is to cut off our problem at some finite time $T$, use linear programming theory on this finite problem, and then recover our infinite time problem through letting $T\rightarrow \infty$. To avoid this extra limiting argument, we keep the infinite time scale, and instead work with a modified version of $\mathcal{P}'$ that will give an equivalent optimal value, but allows us to use results from infinite-dimensional programming. 

These results rely largely on the existence of interior feasible points, and so our new problem must have inequality constraints. To allow this we drop the $q$ variables from the formulation and instead just think of $q_{j,t}=\half \left(p_{j-1,t-1}+p_{j+1,t-1}\right)-p_{j,t}$. We also change the embedding condition to a potential function constraint, requiring that the potential function of the terminal distribution of our process lies above that of $\mu^N$. For Brownian motion we know that $\Ep{L_\tau^x(B)}= \Ep{\left| B_\tau - x \right|} - \Ep{ \left| B_0 - x \right|}= -U_\mu(x) + U_0(x)$ if $B_\tau\sim \mu$ and $(B_{t\wedge\tau})_t$ is uniformly integrable. In particular, if $\tau$ embeds a distribution with potential function greater than that of $\mu$, then $\Ep{L_\tau^x(B)}\leq -U_\mu(x) + U_0(x)$. The discrete time analogue of the local time accrued at $x^N_j$ is, in this case, $\sum_{t} p_{j,t}$, and so this condition becomes $\sum_{t=0}^\infty \pjt \leq U_j$, where $U_j$ is defined below. We will assume that $U_j>0$ for every $j$, and we also know that each $U_j$ is finite, which tells us immediately that $(p_{j,t})\in l^1$ when we consider it as an infinite sequence. For the conditions of strong duality we need to work with a smaller set than $l^1$, but we later show that our smaller space forms a set of stopping times which are dense in the set of stopping times from the $l^1$ problem, and we can recover the result in $l^1$. The idea is that we restrict ourselves to $(\pjt)\in l^1(\boldsymbol{\pi}^{-1}):=\left\{(\pjt)_{j,t}: \medspace \sum_{j,t} \left| \pjt \pi_{j,t}^{-1} \right|<\infty \right\}$, where $\pi_{j,t}$ are the values of $p$ we get from running the random walk and stopping only when we hit the boundaries $j=j^N_0,j^N_L$. It will be useful to know more about these probabilities $\pi_{j,t}$ so that we know how the $p_{j,t}$ must decay.

Note that there will be many points at which we must have $\pi=0$ since the random walk simply cannot visit them, for example $\pi_{j,t}=0$ whenever $|j-j^*|>t$, and also we can only visit every other point at each $x$-level, depending on whether the time is odd or even. To make sense of these in our problem we simply do not include $\pi_{j,t}$ as a variable for these points.

\begin{lemma} \thlabel{pidecay}
The sequence $(\pi_{j,t})$ of probabilities $\pi_{j,t}=\P\left(Y_t^N=x^N_j, \medspace t<H_{x^*}\wedge H_{x_*}\right)$ is in $l^1$, and in particular, there is a vector $(m_j)_j$ such that for each $j$,
\begin{equation*}
\frac{\pi_{j,t}}{\rho^t}\rightarrow m_j \quad \text{ as }t\rightarrow\infty.
\end{equation*}
\end{lemma}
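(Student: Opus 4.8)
The plan is to recognize $(\pi_{j,t})_{j}$ for fixed $t$ as the distribution of a killed random walk and to analyze its asymptotics via the Perron--Frobenius theory of the substochastic transition matrix on the interior states. Write $\mathcal{J}^{\circ} = \mathcal{J} \setminus \{j_0^N, j_L^N\} = \{j_1^N, \dots, j_{L-1}^N\}$ for the non-absorbing levels. The vector $v_t := (\pi_{j,t})_{j \in \mathcal{J}^{\circ}}$ evolves by $v_{t+1} = A v_t$, where $A$ is the $(L-1)\times(L-1)$ matrix with $A_{j,j\pm1} = \tfrac12$ for interior transitions and zeros elsewhere (mass reaching $j_0^N$ or $j_L^N$ is killed and dropped). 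This $A$ is a nonnegative, irreducible matrix (the walk can move between any two interior levels before absorption) with spectral radius $\rho < 1$, since the walk is absorbed in finite time almost surely, so $\|v_t\|_1 = \mathbf{P}(t < H_{x^*}\wedge H_{x_*}) \to 0$.

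First I would record that irreducibility of $A$ gives, by Perron--Frobenius, a simple eigenvalue $\rho \in (0,1)$ equal to the spectral radius, with a strictly positive right eigenvector $m = (m_j)_{j \in \mathcal{J}^{\circ}}$, $Am = \rho m$, and a strictly positive left eigenvector. Summability $(\pi_{j,t}) \in l^1$ is then immediate: $\sum_{j,t}\pi_{j,t} = \sum_t \|v_t\|_1 \le \sum_t \rho'^{\,t}\|A\|^{t_0}\cdots$ — more simply, $\|v_t\|_1 \le C \rho_1^{t}$ for any $\rho_1 \in (\rho,1)$ by Gelfand's formula, so the double sum converges geometrically. (One can alternatively cite finiteness of $\mathbf{E}[H_{x^*}\wedge H_{x_*}]$ for the simple random walk, which bounds $\sum_t \|v_t\|_1$ directly, avoiding spectral input for the $l^1$ claim.)

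The substantive part is the convergence $\pi_{j,t}/\rho^t \to m_j$. Because $A$ need not be aperiodic — indeed the walk has parity, so $A$ is periodic with period $2$ — I cannot simply invoke $\rho^{-t}A^t \to$ (rank-one projection). Instead I would decompose $\R^{\mathcal{J}^{\circ}}$ using the Jordan form of $A$: write $A = \rho P + R$ where $P$ is the spectral projection onto the $\rho$-eigenspace (rank one, $P = m \, \ell^{\top}/(\ell^{\top} m)$ with $\ell$ the left eigenvector) and $R = A(I-P)$ has spectral radius strictly less than $\rho$; then $\rho^{-t} A^t = P + \rho^{-t} R^t$, and $\|\rho^{-t}R^t\| \to 0$. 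Applied to $v_0$ (the initial indicator at $j^*$, suitably handled for the first step from $j^*$), this gives $\rho^{-t} v_t \to P v_0 = c\, m$ for the constant $c = \ell^\top v_0/(\ell^\top m) > 0$; absorbing $c$ into the definition of $(m_j)$ yields the claim, with $m_j = 0$ assigned at the absorbing levels and at parity-excluded points. I should also note the parity subtlety: for a fixed $j$, $\pi_{j,t}$ is zero on every other $t$, so strictly the statement means $\pi_{j,t}/\rho^t \to m_j$ along the sub-sequence of $t$ of the correct parity (on the complementary parity it is eventually identically $0$); I would state this once and then suppress it.

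The main obstacle is precisely this periodicity: getting a clean limit rather than merely a bounded-oscillation statement $\limsup \pi_{j,t}/\rho^t < \infty$. The cleanest route is to pass to $A^2$, which is block-diagonal on the even- and odd-parity interior levels, with each block irreducible and \emph{aperiodic} and having spectral radius $\rho^2$; then the standard Perron--Frobenius limit $\rho^{-2t}(A^2)^t \to$ (rank-one projection) applies to each block, and unwinding the two parities reconstitutes the limit along each parity class of $t$. I would present the argument in this $A^2$ form, as it keeps the limiting statement unconditional and the constants $m_j$ explicit. The only other thing to verify carefully is the bookkeeping at the two levels adjacent to the barriers and at the start level $j^*$, i.e. that the recursion $v_{t+1} = Av_t$ genuinely governs $(\pi_{j,t})$ for $t \ge 1$ with the correct $v_1$ — this is a direct transcription of the transition constraints in $\mathcal{P}'$ (with the $q$'s removed) and is routine.
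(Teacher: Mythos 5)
Your $A^{2}$ argument is correct and rests on essentially the same machinery the paper invokes: both approaches are Perron--Frobenius/quasi-stationary analyses of the killed tridiagonal chain on the interior levels, with the period-$2$ issue handled by working along parity classes. The paper is more citation-driven (it appeals to a Yaglom-limit theorem for periodic chains, Theorem~9 of Ferrari--Villemonais, and reads off $\rho=\cos(\pi/L)$, $m_j\propto\sin(j\pi/L)$ from the explicit diagonalisation of the tridiagonal matrix), whereas you make the spectral decomposition self-contained. For the $l^1$ part the paper uses a more elementary coupling-style estimate (a fixed block of $M$ steps kills at least a fraction $\eps$ of the surviving mass, so $\sum_t\|v_t\|_1$ sums geometrically), while you use the spectral radius and Gelfand's formula; both work, and your parenthetical remark about bounding $\sum_t\|v_t\|_1$ by $\Ep{H_{x^*}\wedge H_{x_*}}$ matches the paper's own aside that one could instead invoke the potential function. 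One thing to tighten: your intermediate Jordan-form step asserts that $R=A(I-P)$ has spectral radius strictly less than $\rho$, but for this period-$2$ chain that is false --- $-\rho$ is also an eigenvalue of $A$ (with eigenvector $(-1)^{j}\sin(j\pi/L)$), so $R$ still has spectral radius $\rho$ and $\rho^{-t}R^{t}$ does \emph{not} vanish; this is precisely why the naive rank-one projection fails, and it is a genuine error if left as is. Your subsequent move to $A^{2}$ (equivalently, projecting onto the two-dimensional $\pm\rho$ eigenspace and tracking the sign along parity classes of $t$) is the correct fix, so the proof as you would ``present'' it is sound. Your explicit note that the limit is really a limit along $t$ of the admissible parity is also worth keeping --- the paper handles this only implicitly, by declaring the unreachable $(j,t)$ not to be variables at all.
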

\begin{proof}
We can of course argue that $(\pi_{j,t})\in l^1$ since $\sum_{t\geq 0} \pi_{j,t} = U^{\pi}_j$ for $U^{\pi}$ the appropriate potential function, but we give a proof which allows us to deduce something of the form of the $\pi_{j,t}$.

Consider running the random walk with the above absorbing region up until some time $t$, where we have some distribution of the remaining mass. Paths at time $t$ leaving the centre-most point, call it $\hat{j}$, take longest to be absorbed at the barriers, but all mass leaving this point will be absorbed in a finite time almost surely. In particular, if we fix a small $\eps>0$, then there is some large $M$ such that
\begin{align*}
\P\left(\text{path leaving } (\hat{j},t) \text { will be absorbed by } t+M\right)&\geq \eps, \\
\text{or } \quad \P\left(\text{path leaving } (\hat{j},t) \text { hasn't been absorbed by } t+M\right)&\leq 1-\eps.
\end{align*}
By our choice of $\hat{j}$, for any $s\geq t+M$ we have $\sum_j \pi_{j,s} \leq (1-\eps) \sum_j \pi_{j,t}$. If we take $t=0$, then we know that $\sum_j \pi_{j,0}=1$ and $\sum_{j,0\leq r < M} \pi_{j,r} \leq (L+1) M$, so by the above reasoning,
\begin{align*}
\sum_{j,t} \pi_{j,t} &= \sum_{\substack{j \\ 0\leq r < M}} \pi_{j,r} + \sum_{\substack{j \\ M\leq r < 2M}} \pi_{j,r} +\sum_{\substack{j \\ 2M\leq r < 3M }} \pi_{j,r} +\cdots \\
&\leq (L+1) M + (L+1) M (1-\eps) + (L+1) M (1-\eps)^2 + \cdots \\
&= (L+1)M \sum_{n=0}^{\infty} (1-\eps)^n \\
&= \frac{(L+1)M}{\eps} < \infty.
\end{align*}

We now have that $(\pi_{j,t})\in l^1$, and can see that the sequence appears to have some approximate exponential decay.

We have a discrete-time Markov chain on $S\cup \{0\}$, where $0$ is our absorbing state (the barriers), and $S:=\{1,\dots,L-1\}$ is an irreducible set of transient states. We can easily find the leading eigenvalue and corresponding eigenvector of our transition matrix, and this gives us a quasi-stationary distribution for the process. From results on Yaglom limits of periodic Markov chains, see for example Theorem 9 of \cite{Ferrari:2015aa}, the q.s.d. has a Yaglom limit. We also know, from standard results, that the survival probability of the process decays exponentially, like $\rho^t$ for $0<\rho<1$ the leading eigenvalue. Combining these results shows that $\frac{\pi_{j,t}}{\rho^t}\rightarrow m_j$, for $m_j=\sqrt{\frac{2}{L}}\sin\left(\frac{j\pi}{L}\right)>0$ and $0<\rho=\cos\left(\frac{\pi}{L}\right)<1$. 

\end{proof}

With this result in mind, and for technical reasons, we restrict our $(p_{j,t})$ to \break $l^1(\lambda):=\left\{(\pjt)_{j,t} : \medspace \sum_j \left| \pjt \right| \lambda^t < \infty \right\}$, for some constant $\lambda>\rho^{-1}>1$. Note here that we are defining $l^1(\lambda)$ in such a way that for a fixed $t$, $p_{j,t}$ is multiplied by $\lambda^t$ for all $j$. Recall that we are optimising over probabilities $(p_{j,t})_{j,t}$ for $j\in\mathcal{J}'$ and $t\geq1$ our discrete time steps, each of length $\frac{1}{N}$ in continuous time. We choose a $j^{*,N}\in\mathcal{J}$ to start our random walk at such that $x_{j^*}^N=\frac{\lfloor j^{*,N}\rfloor}{N}\rightarrow0$ as $N\rightarrow\infty$. Our primal problem is then as follows:

\begin{align*}
\mathcal{P}^N(\lambda): \medspace \sup_{p} & \Bigg\{ \sum_{\substack{j\in\mathcal{J}'' \\ t\geq 2}} \bar{F}_{j,t}^N \left(\half\left(p_{j-1,t-1}+p_{j+1,t-1}\right)-p_{j,t}\right) + \sum_{t\geq2} \bar{F}_{j^N_L,t}^N \half p_{j^N_{L-1},t-1} +  \sum_{t\geq2} \bar{F}_{j^N_0,t}^N \half p_{j^N_1,t-1}  \\
& \qquad \qquad + \sum_{t\geq2} \bar{F}_{j^N_{L-1},t}^N\left( \half p_{j^N_{L-2},t-1} - p_{j^N_{L-1},t}\right) + \sum_{t\geq2} \bar{F}_{j^N_1,t}^N\left( \half p_{j^N_2,t-1} - p_{j^N_1,t}\right)  \\
& \qquad \qquad \qquad \qquad \qquad \qquad \qquad +  \bar{F}_{j^*+1,1}^N \left(\half - p_{j^*+1,1}\right) + \bar{F}_{j^*-1,1}^N \left(\half - p_{j^*-1,1}\right) \Bigg\},  \\
\end{align*}
over $(\pjt)_{\substack{j\in\mathcal{J}' \\ t\geq 1}}$ subject to
\begin{align*}
& \bullet \medspace (p_{j,t})\in l^1(\lambda) && \\
& \bullet \medspace p_{j,t}\geq 0, \quad \forall j,t && \\
& \bullet \medspace \mathbbm{1}\{j=j^*\}+\sum_{t=1}^\infty \pjt \leq \sqrt{N}\left(\sum_i |x^N_i-x^N_j| \mu^N(\{x^N_i\}) - |x^N_{j^*}-x^N_j|\right)=:U^N_j, &&\forall j\in\mathcal{J}' \\
& \bullet \medspace \pjt \leq \frac{1}{2}(p_{j-1,t-1} + p_{j+1,t-1}), \quad &&\forall t\geq 2, j\in\mathcal{J}'' \\
& \bullet \medspace p_{j^N_1,t}\leq\frac{1}{2} p_{j^N_2,t-1}, \quad &&\forall t\geq 2 \\
& \bullet \medspace p_{j^N_{L-1},t}\leq\frac{1}{2} p_{j^N_{L-2},t-1}, \quad &&\forall t\geq2  \\
& \bullet \medspace p_{j^*+1,1}\leq\frac{1}{2}, \quad p_{j^*-1,1}\leq\frac{1}{2} \\
& \bullet \medspace p_{j,1}=0, \quad &&\forall j\neq j^*\pm1. \\
\end{align*}
We leave the conditions at $t=1$ as equalities for clarity, but as with the $\pi_{j,t}$ there will be many points we do not visit at which we must have $p=q=0$. We do not include $p_{j,t}$ as a variable for these points. Denote by $\mathcal{P}^N$ the above problem without the restriction of $(p_{j,t})\in l^1(\lambda)$, i.e. we just require $p\in l^1$, so $\mathcal{P}^N=\mathcal{P}(1)^N$, and let $\mathrm{P}^N(\lambda)$, $\mathrm{P}^N=\mathrm{P}^N(1)$ be the optimal values of these two problems. We will show later that $\mathrm{P}^N=\mathrm{P}^N(\lambda)$. We will also introduce dual problems which we will denote by $\mathcal{D}^N(\lambda),\mathcal{D}^N$ with optimal values $\mathrm{D}^N(\lambda),\mathrm{D}^N$.

\begin{remark}
Note that our problem is of the form
\begin{align*}
\sup_{p} \Phi(p) \quad &\text{over } (p)\in l^1\left(\lambda\right) \\
\text{subject to}& \\
&\bullet \medspace p_{j,t}\geq 0 \quad \forall(j,t) \\
&\bullet \medspace Ap\geq B,
\end{align*}
where $\Phi$ is linear, and $A,B$ are given by
\begin{equation}\label{AB}
A=
\begin{pmatrix}
-\sum_{t=1}^\infty \pjt \\
-p_{j^*\pm1,1} \\
\frac{1}{2}(p_{j-1,t-1} + p_{j+1,t-1})-\pjt \\
\half p_{2,t-1}-p_{1,t} \\
\vdots \\
\end{pmatrix}, \qquad
B=
\begin{pmatrix}
-U_j \\
-\half \\
0 \\
0 \\
\vdots \\
\end{pmatrix}.
\end{equation}

\end{remark}

\section{Duality}
\subsection{Strong Fenchel Duality}
With our choice of primal problem $\mathcal{P}$, we construct a dual problem and show a strong Fenchel duality result using the following theorem from \cite[Theorem~4.4.3]{Borwein:2006ab}.
\begin{theorem} \thlabel{SFD}
Let $\mathsf{X}$ and $\mathsf{Y}$ be Banach spaces, let $f:\mathsf{X}\rightarrow \R\cup\{\infty\}$ and $g:\mathsf{Y}\rightarrow \R\cup\{\infty\}$ be convex functions and let $A:\mathsf{X}\rightarrow \mathsf{Y}$ be a bounded linear map. Define the primal and dual values $\mathsf{p},\mathsf{d}\in[-\infty,\infty]$ by the Fenchel problems
\begin{align*}
\mathsf{p}&=\inf_{x\in \mathsf{X}} \left\{ f(x) + g\left(Ax\right) \right\} \\
\mathsf{d}&=\sup_{y^*\in \mathsf{Y}^*}\left\{ -f^*\left(A^*y^*\right)-g^*\left(-y^*\right)\right\}.
\end{align*}
Then $\mathsf{p}=\mathsf{d}$, and the supremum in the dual problem is achieved if either of the following hold
\renewcommand{\labelenumi}{(\roman{enumi})}
\begin{enumerate}
\item $0\in \core\left(\dom(g)-A\dom(f)\right)$ and $f,g$ are lower semi-continuous 
\item $A\dom(f)\cap\cont(g)\neq\varnothing$.
\end{enumerate}
\end{theorem}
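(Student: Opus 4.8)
The statement is the classical Fenchel--Rockafellar duality theorem in the Banach-space setting; in the paper it is quoted verbatim from \cite[Theorem~4.4.3]{Borwein:2006ab}, so strictly speaking one may just cite it. For orientation, here is how I would organise a proof.

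First I would dispose of weak duality, $\mathsf{d}\le\mathsf{p}$, which requires no hypothesis at all: for any $x\in\mathsf{X}$ and $y^*\in\mathsf{Y}^*$ the Fenchel--Young inequality gives $f(x)+f^*(A^*y^*)\ge\langle y^*,Ax\rangle$ and $g(Ax)+g^*(-y^*)\ge-\langle y^*,Ax\rangle$; adding and rearranging yields $f(x)+g(Ax)\ge -f^*(A^*y^*)-g^*(-y^*)$, and taking $\inf_x$ on the left and $\sup_{y^*}$ on the right gives $\mathsf{p}\ge\mathsf{d}$. So the whole content is the reverse inequality together with attainment of the dual supremum.

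For that I would introduce the perturbation (value) function $h:\mathsf{Y}\to[-\infty,\infty]$, $h(u):=\inf_{x\in\mathsf{X}}\{f(x)+g(Ax+u)\}$. It is convex (a partial infimum of the jointly convex $(x,u)\mapsto f(x)+g(Ax+u)$), one has $h(0)=\mathsf{p}$, its effective domain is $\dom g-A\,\dom f$, and a short conjugacy computation shows $h^*(y^*)=f^*(-A^*y^*)+g^*(y^*)$, hence the dual value is the biconjugate $\mathsf{d}=h^{**}(0)$. Thus $\mathsf{p}=\mathsf{d}$ with the dual supremum attained is exactly the assertion that $h$ is subdifferentiable at the origin: if $z^*\in\partial h(0)$ then Young's equality forces $h^*(z^*)=-h(0)$, so $-h^*(z^*)=\mathsf{p}$ and, by weak duality, $y^*=-z^*$ is a dual optimiser with $\mathsf{d}=\mathsf{p}$. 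It remains to show that either hypothesis forces $\partial h(0)\neq\varnothing$. Under (ii) there is $x_0\in\dom f$ with $Ax_0\in\cont g$, so $u\mapsto f(x_0)+g(Ax_0+u)$ is finite and bounded above near $u=0$; then $h$ is a proper convex function bounded above on a neighbourhood of $0$, hence continuous there, hence subdifferentiable there. Under (i) one uses that $h$ is lower semicontinuous and $0\in\core(\dom h)$, and invokes the Banach-space fact (a Baire-category consequence) that for a proper lsc convex function the core and the interior of the domain coincide and the function is continuous on that interior, again producing a subgradient at $0$.

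The step I expect to be the genuine obstacle is precisely this last passage from the constraint qualification to subdifferentiability of $h$ at the origin --- in particular verifying the lower semicontinuity of the infimal projection $h$ under hypothesis (i), which is delicate and in \cite{Borwein:2006ab} is handled not by analysing $h$ directly but through a decoupling/sandwich lemma. For the purposes of this paper it suffices to cite the theorem as stated; the real subsequent work will be to check that the concrete problem $\mathcal{P}^N(\lambda)$ meets one of (i), (ii) --- which is exactly why the primal was reformulated with inequality constraints and strictly positive $U^N_j$, so that an interior/continuity feasible point is available.
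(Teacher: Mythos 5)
The paper does not prove \thref{SFD} at all: it is quoted verbatim from \cite[Theorem~4.4.3]{Borwein:2006ab} and then applied, so there is no in-paper argument to compare against. Your sketch via the perturbation function $h(u)=\inf_{x\in\mathsf{X}}\{f(x)+g(Ax+u)\}$ is a correct and standard route: weak duality by Fenchel--Young, the conjugacy identity $h^*(y^*)=f^*(-A^*y^*)+g^*(y^*)$ so that $\mathsf{d}=h^{**}(0)$, and the equivalence of strong duality plus dual attainment with $\partial h(0)\neq\varnothing$ are all right. Your treatment of hypothesis (ii) (local boundedness of $h$ near $0$, hence continuity, hence a subgradient) is also correct, modulo the easy observation that one may assume $\mathsf{p}>-\infty$ since otherwise weak duality already forces $\mathsf{p}=\mathsf{d}=-\infty$. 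Most importantly, your self-diagnosis of where the real work lies is accurate: under hypothesis (i) one cannot simply assert that $h$ is lower semicontinuous --- an infimal projection of a lsc convex function need not be lsc --- and that is exactly the obstruction that \cite{Borwein:2006ab} sidesteps via a decoupling/sandwich lemma rather than by analysing $h$ directly. So the outline is sound, the one genuinely unfinished step is correctly flagged as such, and since the paper merely cites the theorem, deferring to \cite{Borwein:2006ab} as you do is the appropriate level of rigour in this context.
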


We briefly explain the notation in the theorem, and then use it to prove duality. For a functional $f:\mathsf{X}\rightarrow \R\cup\{\infty\}$, its convex conjugate is the function $f^*:\mathsf{X}^*\rightarrow \R\cup\{\infty\}$ given by $f^*(x^*)=\sup_{x\in \mathsf{X}} \left\{ \langle x^*,x \rangle - f(x) \right\}$. Recall also that $\dom(f)=\{x: \medspace f(x)<\infty \}$, $\cont(g)=\{y:\medspace g \text{ is continuous at } y\}$, and for $S\subseteq \mathsf{Y}$, $s\in\core(S)$ if $\cup_{\alpha>0} \alpha(S-s)=\mathsf{Y}$.

As mentioned earlier, we consider $\mathsf{X}=l^1\left(\lambda\right)$ and we think of $\mathsf{Y}$ as being $\R^{L+1}\times l^1(\lambda)$, where the first $L+1$ variables correspond to the $U_j$ conditions. Elements of $\mathsf{Y}^*=\R^{L+1}\times l^{\infty}\left(\lambda^{-1}\right)$ will be written as $y^*=(\nu_j,\eta_{j,t})$. We take $f,g$ to be
\begin{align*}
f(p)&:=
\begin{cases}
\Phi(p)=\sum_{j,t}\left(-\bar{F}^N_{j,t}\right)\left(\half\left(p_{j-1,t-1}+p_{j+1,t-1}\right)-p_{j,t}\right), & p\geq0 \\
\infty, & \text{otherwise}
\end{cases} \\
g(y)&:=
\begin{cases}
0, & y\geq B \\
\infty, & \text{otherwise},
\end{cases}
\end{align*}
and $\Phi,A,B$ are as in \eqref{AB}. With these functions it is clear that the optimal value $\mathsf{p}$ is then $-\mathrm{P}^N(\lambda)$. Also, $f,g$ are convex and lower semi-continuous, and $A$ is a linear, bounded operator. The corresponding conjugates are
\begin{align*}
f^*(x^*)&=
\begin{cases}
\infty, & x^*_{j,t}>\bar{F}^N_{j,t}-\half\left(\bar{F}^N_{j+1,t+1}+\bar{F}^N_{j-1,t+1}\right) \text{ for any } (j,t) \\
\half\left(\bar{F}^N_{j^*+1,1}+\bar{F}^N_{j^*-1,1}\right), & \text{otherwise}
\end{cases} \\
g^*(y^*)&=
\begin{cases}
\infty, & \nu^*_j>0 \text{ or } \eta^*_{j,t}>0 \text{ for any } j,(j,t) \\
\sum_{j=1}^{L-1} \nu^*_j \left(-U_j\right) - \half\left(\eta_{j^*+1,1}+\eta_{j^*-1,1}\right), & \text{otherwise}.
\end{cases}
\end{align*}

Before showing that these functions satisfy condition $(i)$ of the theorem, we find the dual problem. The dual operator $A^*$ is the functional $A^*:\mathsf{Y}^*\rightarrow \mathsf{X}^*$ satisfying $\langle A^*y^*,x\rangle=\langle y^*,Ax \rangle$ $\forall x\in \mathsf{X}$. Now,
\begin{align*}
\langle y^*,Ax \rangle&= \sum_{j\in\mathcal{J}'} \nu_j\left(-\sum_{t\geq 1} p_{j,t} \right) +\sum_{\mathclap{\substack{1\leq i \leq L-1 \\ t\geq 2}}} \eta_{j^N_i,t} \left(\frac{1}{2}( p_{j^N_{i-1},t-1} \mathbbm{1}\{i\geq2\}+ p_{j^N_{i+1},t-1} \mathbbm{1}\{i\leq L-2\}) -p_{j^N_i,t} \right) \\
&\qquad \qquad \qquad \qquad \qquad \qquad \qquad \qquad \qquad \qquad +\eta_{j^*+1,1} \left(-p_{j^*+1,1}\right) + \eta_{j^*-1,1} \left(-p_{j^*-1,1}\right) \\
&= \sum_{\mathclap{\substack{j\in\mathcal{J}' \\ t\geq 1}}} p_{j,t} \left( \half \left(\eta_{j+1,t+1}+\eta_{j-1,t+1}\right)-\eta_{j,t}-\nu_{j}\right),
\end{align*}
and so, for $y^*\in \mathsf{Y}^*$,
\begin{equation*}
A^*y^*=\left(\half \left(\eta_{j+1,t+1}+\eta_{j-1,t+1}\right)-\eta_{j,t}-\nu_{j} \right)_{\substack{j\in\mathcal{J}' \\ t\geq 1}}.
\end{equation*}

From this, we see that our Fenchel dual is
\begin{alignat}{2}
\notag \mathsf{d}=\sup \Bigg\{ -\sum_{j\in\mathcal{J}'}& \nu_{j} U_{j} - \half\left(\eta_{j^*+1,1}+\eta_{j^*-1,1}\right) -\half\left(\bar{F}^N_{j^*+1,1}+\bar{F}^N_{j^*-1,1}\right) \Bigg\}&&\\
\notag &\text{over } (\nu_j)_{j\in\mathcal{J}'}, (\eta_{j,t})_{\substack{j\in\mathcal{J} \\ t\geq 1}}, \quad \text{where } (\nu,\eta)\in l^{\infty}(\lambda^{-1}) \\ 
\label{Fdiscretesuper} \text{subject to     } & \bullet \medspace \eta_{j,t},\nu_j\geq 0, \quad &&\forall j,t \\
\label{Fetasuper} & \bullet \medspace \half\left(\eta_{j+1,t+1}+\eta_{j-1,t+1}\right)-\eta_{j,t}-\nu_j\leq \bar{F}^N_{j,t}-\half\left(\bar{F}^N_{j+1,t+1}+\bar{F}^N_{j-1,t+1}\right), \quad && \forall j,t.
\end{alignat}

All that remains to show is that we satisfy one of the two remaining conditions of the theorem.
\begin{theorem}
With $A,f,g$ defined as above, $(i)$ of \thref{SFD} holds. In particular, there is no duality gap, and the optimiser is attained in the dual problem.
\end{theorem}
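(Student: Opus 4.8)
The plan is to verify condition $(i)$ of \thref{SFD}, which splits into two parts: checking that $f,g$ are lower semi-continuous, and establishing the core condition $0\in\core(\dom(g)-A\dom(f))$. The first part is essentially immediate: $g$ is the indicator-type function of the closed convex cone $\{y\geq B\}$ (lower semi-continuous since the cone is closed), and $f$ equals the linear functional $\Phi$ on the closed convex cone $\{p\geq 0\}\subseteq l^1(\lambda)$ and $+\infty$ off it, so $f$ is lower semi-continuous because $\Phi$ is continuous (it is a bounded linear functional: the coefficients $\bar F^N_{j,t}$ are bounded and the $\lambda^t$-weights control the growth) and $\{p\geq 0\}$ is closed. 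Both are manifestly convex.

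The substantive step is the core condition. Here $\dom(f)=\{p\in l^1(\lambda): p\geq 0\}$ and $\dom(g)=\{y\in\mathsf{Y}: y\geq B\}$, so I need to show that every $z\in\mathsf{Y}=\R^{L+1}\times l^1(\lambda)$ can be written, after scaling by some $\alpha>0$, as $(y-B)-Ap$ with $y\geq B$ and $p\geq 0$ in $l^1(\lambda)$ — equivalently, that $\bigcup_{\alpha>0}\alpha\bigl(\dom(g)-A\dom(f)\bigr)$ is all of $\mathsf{Y}$. The natural strategy is to exhibit a single interior feasible point and argue that the cone opens up around it in every direction. Concretely I would take $p=\varepsilon\,\pi$, the probabilities from running the random walk absorbed only at the boundary (scaled down by a small $\varepsilon>0$), which lies in $l^1(\lambda)$ precisely by \thref{pidecay} since $\pi_{j,t}/\rho^t\to m_j$ and $\lambda>\rho^{-1}$. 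For this $p$ the constraint inequalities $\pjt\leq\frac12(p_{j-1,t-1}+p_{j+1,t-1})$ etc. hold with \emph{equality} (the $\pi$ satisfy the random-walk recursions exactly, with no stopping mass), and the potential constraints $\sum_t \pjt\leq U^N_j$ hold strictly for $\varepsilon$ small since each $U^N_j>0$ by assumption. So $Ap$ lies strictly inside $\dom(g)$ in the $U_j$-coordinates and on the boundary in the recursion coordinates; then perturbing $p$ within $\{p\geq 0\}$ by small multiples of the basis sequences lets $Ap$ move in both directions along each recursion coordinate (since $\pi_{j,t}>0$ wherever the variable exists, each $p_{j,t}$ can be nudged up or down), and $y$ absorbs the rest. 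Summing up, given any target $z\in\mathsf{Y}$, a sufficiently small $\alpha$ makes $\alpha z$ realisable as $(y-B)-Ap$, which is the core condition.

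The main obstacle I anticipate is the bookkeeping around the two function spaces and the weighted norms: one must check that the perturbations used to cover $\mathsf{Y}$ stay in $l^1(\lambda)$ (only finitely many coordinates are moved, so this is fine, but it needs stating), that $A$ genuinely maps $l^1(\lambda)\to\R^{L+1}\times l^1(\lambda)$ boundedly (the recursion shifts indices by one in $t$, costing a factor $\lambda$, which is harmless), and — most delicately — that the boundary-absorbed chain $\pi$ really does have all its "visitable" coordinates strictly positive, so that every primal variable present in $\mathcal{P}^N(\lambda)$ can be perturbed in both directions; this is where the periodicity caveat (only every other site at each level, $|j-j^*|\leq t$) must be handled by recalling that such coordinates are simply not variables. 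I expect condition $(ii)$ could alternatively be invoked — $\cont(g)$ is the interior $\{y>B\}$ and one checks $A(\varepsilon\pi)$ lands there after relaxing to strict inequalities — but since the recursion constraints are equalities for $\pi$ this does not literally work without modification, so pushing through $(i)$ via the core computation is the cleaner route. Once the core condition is in hand, \thref{SFD} gives $\mathsf{p}=\mathsf{d}$ and attainment of the dual supremum, i.e. $\mathrm{P}^N(\lambda)=\mathrm{D}^N(\lambda)$ with the dual optimiser achieved, which is exactly the claim.
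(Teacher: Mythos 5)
You identify the right framework: lower semi-continuity of $f,g$ is immediate (same as the paper, which states it without proof), and the whole weight of the theorem rests on the core condition. The base point $p=\varepsilon\pi$ is also a sensible choice, and you correctly observe that the recursion rows of $A$ hold with equality on $\pi$, so condition $(ii)$ cannot be applied naively. However, the core argument as you sketch it does not close. Unwinding the definitions, to realise a given $z\in\mathsf{Y}$ as $z\in\alpha\left(\dom(g)-A\dom(f)\right)$ you must produce $p\geq 0$ in $l^1(\lambda)$ and $y\geq B$ with $z/\alpha=y-Ap$; in each recursion coordinate this reads $z_{j,t}/\alpha=y_{j,t}-q_{j,t}$ with $y_{j,t}\geq 0$, so wherever $z_{j,t}<0$ you are forced to have $q_{j,t}\geq -z_{j,t}/\alpha>0$. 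Since $z\in l^1(\lambda)$ can have \emph{infinitely many} negative recursion coordinates, you need a single $p\in l^1(\lambda)$ generating strictly positive stopping mass $q_{j,t}$ simultaneously at infinitely many sites, with the correct decay. Your remark that ``only finitely many coordinates are moved, so this is fine'' gives this away: a finite perturbation of $\varepsilon\pi$ can only cover $z$ with finitely many negative recursion coordinates, which is a proper subset of $\mathsf{Y}$.

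This is exactly the content of the paper's construction that your sketch omits. The paper sets
\begin{equation*}
\rho_{j,t}=\gamma\sum_{k,s}|z_{k,s}|\,\iden\{s>t\}\,\frac{\pi_{j,t}}{\pi_{k,s}},
\end{equation*}
releasing a small amount of mass from every $(k,s)$ in proportion to $|z_{k,s}|$ and letting it run like $\pi$ thereafter; one checks that this produces stopping mass
\begin{equation*}
q_{j,t}=\half\left(\rho_{j-1,t-1}+\rho_{j+1,t-1}\right)-\rho_{j,t}=\gamma\sum_{k}|z_{k,t}|\,\frac{\pi_{j,t}}{\pi_{k,t}}\geq\gamma|z_{j,t}|
\end{equation*}
at \emph{every} $(j,t)$ at once. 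The nontrivial step is then verifying $\rho\in l^1(\lambda)$, which uses the Yaglom-limit asymptotics $\pi_{j,t}/\rho^t\to m_j>0$ from \thref{pidecay} together with $\lambda>\rho^{-1}$ and $z\in l^1(\lambda)\subset l^1(\boldsymbol{\pi})$; the estimate $\sum_{j,t}\rho_{j,t}\lambda^t<\infty$ is where the choice of the weighted space $l^1(\lambda)$ earns its keep. Finally, one solves for $y_j$ and $y_{j,t}$ explicitly and picks $\gamma$ small enough that $y_j\geq -U_j$ for all $j$ (possible since each $U_j>0$). Your proposal has the right scaffolding but stops short of this construction and the accompanying $l^1(\lambda)$ estimates, which are the mathematical substance of the proof.
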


\begin{proof}
A typical element of $\dom(g)-A\dom(f)$ looks like
\begin{equation*}
\begin{array}{cc}
w=\begin{pmatrix}
y_j + \sum_{t\geq 1} \pjt \\
y_{j^*\pm1,1}+p_{j^*\pm1,1} \\
y_{j,t} + \pjt -\half(p_{j-1,t-1}+p_{j+1,t-1}) \\
y_{j^N_1,t} + p_{j^N_1,t}-\half p_{j^N_2,t-1} \\
\vdots 
\end{pmatrix} &
\begin{array}{rll}
& (p_{j,t}),(y_j,y_{j,t})\in l^1(\lambda) & \\
 & \pjt\geq 0 \quad &\forall j\in\mathcal{J}', \thinspace t\geq1\\
\text{ for }& y_j\geq -U_j \quad &\forall j\in\mathcal{J}'\\
& y_{j^*\pm1,1}\geq -\half &\\
& y_{j,t}\geq 0 \quad &\forall j\in\mathcal{J}', \thinspace t\geq2.
\end{array}
\end{array}
\end{equation*}

We are required to show that for any $z\in \R^{L+1}\times l^1(\lambda) \medspace \exists \alpha>0$ and a $w$ of the above form such that $\alpha w =z$. Take $z=(z_j,z_{j,t})\in \R^{L+1}\times l^1(\lambda)$, consider a constant, $\gamma$, and let
\begin{equation*}
\rho_{j,t}^{k,s}= \gamma \left|z_{k,s}\right|\iden\{s>t\} \frac{\pi_{j,t}}{\pi_{k,s}},
\end{equation*}
for $j,k\in\mathcal{J}$ and $s,t\geq 1$, so that
\begin{equation*}
\rho_{j,t}^{k,s}-\half\left(\rho_{j-1,t-1}^{k,s}+ \rho_{j+1,t-1}^{k,s}\right)=
\begin{cases} 0, & s\neq t \\ 
-\gamma \left|z_{k,t}\right|\frac{\pi_{j,t}}{\pi_{k,t}},& s=t
\end{cases} \quad \text{for }j\in\mathcal{J}',
\end{equation*}
and similarly for the other terms.
Then $\rho_{j,t}=\sum_{k,s} \rho_{j,t}^{k,s}\geq0$ defines a set of probabilities such that,
\begin{align}
\notag \rho_{j,t} -\half(\rho_{j-1,t-1}+\rho_{j+1,t-1})=- \gamma \sum_k |z_{k,t}| \frac{\pi_{j,t}}{\pi_{k,t}}\leq -\gamma \left| z_{j,t}\right|,\\ 
\sum_t \rho_{j,t} =\gamma \sum_{k,s,t} \iden\{s>t\}|z_{k,s}|\frac{\pi_{j,t}}{\pi_{k,s}}=\gamma \sum_{k,s}|z_{k,s}|\frac{\pi_{j,0}+\ldots + \pi_{j,s-1}}{\pi_{k,s}}\in (0,\infty), \label{sumrho}
\end{align}
since $\pi\in l^1$. Also, for any $\eps>0$ there is a $T$ such that for every $j$ and $t\geq T$, $\left| \frac{\pi_{j,t}}{\rho^t}-m_j \right|<\eps$. Then
\begin{align*}
\sum_{j,t} \rho_{j,t} \lambda^t &= \gamma \sum_{j,t,k,s} \iden\{s>t\} |z_{k,s}|\lambda^t\frac{\pi_{j,t}}{\pi_{k,s}} \\
&= \gamma \sum_{\substack{k,s\leq T \\ j,t}} \iden\{s>t\} |z_{k,s}|\lambda^t\frac{\pi_{j,t}}{\pi_{k,s}} + \gamma \sum_{j,t\leq T} \pi_{j,t}\lambda^t \sum_{k,s>T} \frac{|z_{k,s}|}{\pi_{k,s}} + \sum_{k,s>T} \frac{|z_{k,s}|}{\pi_{k,s}} \sum_{j,T<t<s} \frac{\pi_{j,t}}{\rho^t}(\lambda\rho)^t \\
&<\infty.
\end{align*}
The first sum is a finite sum of finite terms, so is finite, and the second is finite since $(z_{j,t})\in l^1(\lambda)\subset l^1(\boldsymbol{\pi})$. Let $\overline{m}=\max_j m_j$ and $\underline{m}=\min_j m_j$. Then for the final sum,
\begin{align*}
\sum_{k,s>T} \frac{|z_{k,s}|}{\pi_{k,s}} \sum_{j,T<t<s} \frac{\pi_{j,t}}{\rho^t}(\lambda\rho)^t &\leq \sum_{k,s>T} \frac{|z_{k,s}|}{\pi_{k,s}} \sum_{j,T<t<s} (\lambda\rho)^t (m_j+\eps) \\
&\leq (\overline{m}+\eps)\sum_{k,s>T} \frac{|z_{k,s}|}{\pi_{k,s}} \frac{(\lambda\rho)^s-1}{\lambda\rho-1} \\
&\leq \frac{\overline{m}+\eps}{\lambda\rho-1} \sum_{k,s>T} \frac{|z_{k,s}|\lambda^s}{m_k-\eps} - \frac{\overline{m}+\eps}{\lambda\rho-1} \sum_{k,s>T} \frac{|z_{k,s}|}{\pi_{k,s}} \\
&\leq \frac{\overline{m}+\eps}{\underline{m}-\eps} \frac{1}{\lambda\rho-1} \sum_{k,s>T} |z_{k,s}|\lambda^s -\frac{\overline{m}+\eps}{\lambda\rho-1} \sum_{k,s>T} \frac{|z_{k,s}|}{\pi_{k,s}} \\
&<\infty,
\end{align*}
again since $(z_{j,t})\in l^1(\lambda)\subset l^1(\boldsymbol{\pi})$, and this also has a finite limit as $\eps\rightarrow0$.

In particular, we have $(\rho_{j,t})\in l^1\left( \lambda \right)$. For each $j,t$ we need $y_j\geq -U_j$, $y_{j,t}\geq0$, $\gamma>0$ and $\alpha>0$ such that
\begin{align*}
z_j&=\alpha \left(y_j + \sum_{t\geq 1} \rho_{j,t} \right)=\alpha \left(y_j +\gamma \sum_{k,s,t} \iden\{s>t\}|z_{k,s}|\frac{\pi_{j,t}}{\pi_{k,s}}\right), \\
z_{j,t}&=\alpha \left( y_{j,t}+\rho_{j,t} -\half(\rho_{j-1,t-1}+\rho_{j+1,t-1})\right)=\alpha \left( y_{j,t} -\gamma \pi_{j,t} \sum_k \frac{|z_{k,t}|}{\pi_{k,t}} \right).
\end{align*}
Setting $\alpha=\gamma^{-1}$, we find that
\begin{align*}
y_j&=\gamma z_j-\gamma \sum_{k,s,t} \iden\{s>t\}|z_{k,s}|\frac{\pi_{j,t}}{\pi_{k,s}}>\half \gamma z_j -  \gamma \sum_{t} \pi_{j,t} \sum_{k,s} \frac{|z_{k,s}|}{\pi_{k,s}} \geq -U_j, \quad &&\forall j\in\mathcal{J}' \\
y_{j,t}&=\gamma z_{j,t}+ \gamma \pi_{j,t} \sum_k \frac{|z_{k,t}|}{\pi_{k,t}} \geq 0, \quad &&\forall j\in\mathcal{J}', t\geq1,
\end{align*}
for any choice of $0<\gamma \leq \min_j \left\{ \left|2U_j\left(z_j- 2\sum_{t} \pi_{j,t} \sum_{k,s} \frac{|z_{k,s}|}{\pi_{k,s}}\right)^{-1}  \right| \right\}$. Now we can see that the scaling factor was necessary to ensure that even if $y_j$ is negative, we can scale it so that it is still larger than $-U_j$. Note here that division by zero isn't an issue since $\sum_{t} \pi_{j,t} \sum_{k,s} \frac{|z_{k,s}|}{\pi_{k,s}}$ is a strict upper bound of $\sum_t \rho_{j,t}$ by \eqref{sumrho} and we can therefore choose a smaller bound if $z_j=2\sum_{t} \pi_{j,t} \sum_{k,s} \frac{|z_{k,s}|}{\pi_{k,s}}$. To see that $(y_{j,t})\in l^1(\lambda)$ note that $(z_{j,t}),(\rho_{j,t})\in l^1(\lambda)$ and $y_{j,t}=\gamma z_{j,t} + \half(\rho_{j-1,t-1}+\rho_{j+1,t-1})-\rho_{j,t}$.
\end{proof}

\subsection{Interpretation of Dual Variables} \label{dualsection}
In this section we outline the interpretation of the dual problem and its relation to the continuous time problem of minimising the cost of superhedging strategies. This work was motivated by the problem of finding a robust upper bound on the price of a European option on a leveraged exchange traded fund, and in particular in \cite{Cox:2016aa} we use results from this paper to give the form of the optimal superhedging strategy in that case. Here we give a brief overview of how the continuous time hedging strategy can be obtained from $(\nu^*,\eta^*)$, the optimisers of $\mathcal{D}^N(\lambda)$.

Our Fenchel dual problem is
\begin{flalign}
\notag &\mathcal{D}^N(\lambda): \quad \text{ minimise } \Bigg\{ \sum_{j\in\mathcal{J}'} \nu_j U_j + \half\left(\eta_{j^*+1,1}+\eta_{j^*-1,1}\right) +\half\left(\bar{F}^N_{j^*+1,1}+\bar{F}^N_{j^*-1,1}\right) \Bigg\}&\\
\notag &\text{over } (\nu_j)_{j\in\mathcal{J}'}, (\eta_{j,t})_{\substack{j\in\mathcal{J} \\ t\geq 1}} \text{ subject to } &\\
&\bullet (\nu,\eta)\in l^{\infty}(\lambda^{-1}) &\\ 
\label{Fdiscretesuper2} &\bullet \medspace \eta_{j,t},\nu_j\geq 0, \quad &&\forall j,t &\\
\label{Fetasuper2}  &\bullet \medspace \half\left(\eta_{j+1,t+1}+\eta_{j-1,t+1}\right)-\eta_{j,t}-\nu_j\leq \bar{F}^N_{j,t}-\half\left(\bar{F}^N_{j+1,t+1}+\bar{F}^N_{j-1,t+1}\right), \quad && \forall j,t. &
\end{flalign}

In Lagrangian duality we know that we have duality exactly when the complementary slackness conditions hold, and this is also true here. Fix $N$ and take the optimal dual solution $y^*=(\nu^*,\eta^*)$ and any primal feasible $p$, so $g(Ap)=0$. Then,
\begin{align*}
\mathsf{d}&=-\sum_{j\in\mathcal{J}'} \nu^*_j U_j - \half\left(\eta^*_{j^*+1,1}+\eta^*_{j^*-1,1}\right) -\half\left(\bar{F}^N_{j^*+1,1}+\bar{F}^N_{j^*-1,1}\right) \\
&\leq -\sum_{j\in\mathcal{J}'} \nu^*_j U_j - \half\left(\eta^*_{j^*+1,1}+\eta^*_{j^*-1,1}\right) -\half\left(\bar{F}^N_{j^*+1,1}+\bar{F}^N_{j^*-1,1}\right) \\
& \qquad \qquad \qquad \qquad \qquad \qquad \qquad \qquad \qquad + \sum_{\substack{j\in\mathcal{J}' \\ t\geq2}} \eta^*_{j,t}\left(\half\left(p_{j+1,t-1}+p_{j-1,t-1}\right)-p_{j,t}\right) \\
&\leq \sum_{\substack{j\in\mathcal{J}' \\ t\geq1}} p_{j,t}\left(\half\left(\eta^*_{j+1,t+1}+\eta^*_{j-1,t+1}\right)-\eta^*_{j,t}-\nu^*_j\right) -\half\left(\bar{F}^N_{j^*+1,1}+\bar{F}^N_{j^*-1,1}\right) \\
&\leq \sum_{\substack{j\in\mathcal{J}' \\ t\geq1}} p_{j,t}\left(\bar{F}^N_{j,t}-\half\left(\bar{F}^N_{j+1,t+1}+\bar{F}^N_{j-1,t+1}\right)\right) -\half\left(\bar{F}^N_{j^*+1,1}+\bar{F}^N_{j^*-1,1}\right) \\
&=f(p)+g(Ap).
\end{align*}
We have equality in the above inequalities, and $p$ is a primal optimiser, if and only if the following complementary slackness conditions hold:
\begin{align}
\label{FCS1} \pjt>0 & \implies  \frac{1}{2} \left( \eta_{j-1,t+1} + \eta_{j+1,t+1} \right)-\eta_{j,t}-\nu_j=\bar{F}^N_{j,t}-\half\left(\bar{F}^N_{j+1,t+1}+\bar{F}^N_{j-1,t+1}\right) \\
\label{FCS2} q_{j,t}>0 &\implies  \eta_{j,t}=0 \\
\label{FCS3} \nu_j>0 & \implies \sum_{t=1}^{\infty} \pjt = U_j,
\end{align}
where as always we define $q_{j,t}=\half\left(p_{j-1,t-1}+p_{j+1,t-1}\right)-p_{j,t}$.

It is well known that in the continuous time problem of finding an upper bound on the price of an exotic option that the dual problem corresponds to minimising the cost of a super-replicating portfolio, and much work has been done in finding conditions under which there is no duality gap. In \cite{Cox:2013ac} and \cite{Cox:2013ab} the authors give the form of the optimal superhedging strategy in the case where the optimal stopping time corresponds to a Root or Rost hitting time. They define functions $G(x,t)$, representing a dynamic trading strategy, and $H(x)$, corresponding to a fixed portfolio of options, in terms of the stopping time/stopping region. If $G,H$ are defined such that for a feasible stopping time $\sigma$ we have
\begin{list}{$\bullet$}{}
\item $F(x,t)\leq G(x,t)+H(x)$ everywhere
\item $G(W_t,t)$ is a supermartingale
\item $F(W_{\sigma},\sigma)=G(W_{\sigma},\sigma)+H(W_{\sigma})$
\item $G(X_{t \wedge \sigma},t \wedge \sigma)$ is a martingale,
\end{list}
then we have the optimal dual portfolio and no duality gap. The first two conditions above are dual feasibility conditions, and the final two ensure optimality, so we can think of them as complementary slackness conditions. We can then see how we might recover these functions from our dual problem.

We require functions $G,H$ such that $G+H$ is a superhedge, and $G(W_t,t)$ is a supermartingale, and these conditions correspond to \eqref{Fdiscretesuper} and \eqref{Fetasuper} respectively. If we define $\tilde{\eta}^*_{j,t}=\eta_{j,t}+\bar{F}^N_{j,t}$ then \eqref{Fdiscretesuper}, \eqref{Fetasuper} become
\begin{align*}
&\tilde{\eta}_{j,t}\geq \bar{F}^N_{j,t} \quad &&\forall(j,t) \\
&\tilde{\eta}_{j,t}-\half\left(\tilde{\eta}_{j+1,t+1}+\tilde{\eta}_{j-1,t-1}\right) +\nu_j\geq 0 \quad &&\forall(j,t),
\end{align*}
so $\tilde{\eta}$ has the superhedging and supermartingale properties we are looking for. It then makes sense to hope that we find 
\begin{equation*}
\lim_{N\rightarrow\infty} \eta^*_{\lfloor\sqrt{N}x\rfloor,\lfloor Nt\rfloor} = G(x,t) + H(x) -F(x,t).
\end{equation*}
Our complementary slackness conditions \eqref{FCS1} and \eqref{FCS2} then correspond to $G(W_t,t)$ being a martingale in the continuation region and having an exact hedge in the stopping region. From \eqref{FCS1} we have
\begin{align*}
\nu^*_j&= \half \left( \tilde{\eta}^*_{j+1,t+1}-2\tilde{\eta}^*_{j,t+1} + \tilde{\eta}^*_{j-1,t+1}\right) + \left(\tilde{\eta}^*_{j,t+1}-\tilde{\eta}^*_{j,t}\right)  \\
&=\half\left(\frac{\left(\tilde{\eta}^*_{j+1,t+1}-\tilde{\eta}^*_{j,t+1}\right)-\left(\tilde{\eta}^*_{j,t+1} - \tilde{\eta}^*_{j-1,t+1}\right)}{\frac{1}{N}}\right) \frac{1}{N} + N \left(\tilde{\eta}^*_{j,t+1}-\tilde{\eta}^*_{j,t}\right) \frac{1}{N},
\end{align*}
so $N\nu^*_{\lfloor\sqrt{N}x\rfloor}\rightarrow \half H''(x)$ if $G(X_{t\wedge\tau},t\wedge\tau)$ is a martingale and we have the suitable differentiability. Also, our dual objective function then becomes $\Ep{G(W_{\tau},\tau) + H(W_{\tau})}=G(W_0,0)+\Ep{H(W_{\tau})}$, the cost of the superhedging portfolio, since 
\begin{equation*}
\sum_j \nu^*_j U_j \rightarrow \int \half H''(x) \left(U_{\delta_0}(x)-U_{\mu}(x)\right) \dx = \Ep{H(W_{\tau})}-H(W_0) \quad \text{if } W_{\tau}\sim \mu.
\end{equation*}

\subsection{Strong Duality of $\mathcal{P}^N,\mathcal{D}^N$}
We now have strong duality in the $\lambda$ problems, and attainment in the dual problem. In this section we show that we have primal attainment in $\mathcal{P}^N$, the $l^1$ problem, and that the optimal value in this case agrees with the $l^1(\lambda)$ problem, $\mathrm{P}^N=\mathrm{P}^N(\lambda)$.

\begin{lemma} \thlabel{primalattain}
Choose $\bar{F}^N$ so that $\mathrm{P}^N<\infty$. Then $\mathrm{P}^N=\mathrm{P}^N(\lambda)$ and the supremum $\mathrm{P}^N$ is attained by some sequence $p^*\in l^1$.
\end{lemma}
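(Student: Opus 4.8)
The plan is to derive both claims from a single uniform geometric decay estimate for feasible points, then to obtain equality of the two optimal values by a truncation/density argument and attainment by an $l^1$-compactness argument. The inclusion $l^1(\lambda)\subset l^1$ is immediate and, since $\mathcal{P}^N$ and $\mathcal{P}^N(\lambda)$ share all their other constraints, the feasible set of $\mathcal{P}^N(\lambda)$ lies inside that of $\mathcal{P}^N$, so $\mathrm{P}^N(\lambda)\leq\mathrm{P}^N$; only the reverse inequality and attainment require work. Here is the engine. Writing $r_t=(p_{j,t})_{j\in\mathcal{J}'}\in\R^{L-1}$, the inequality constraints of $\mathcal{P}^N$ say precisely that $0\leq r_t\leq Q r_{t-1}$ componentwise, where $Q$ is the symmetric sub-stochastic transition matrix of the nearest-neighbour random walk on $\{j^N_1,\dots,j^N_{L-1}\}$ killed upon reaching $\{j^N_0,j^N_L\}$ — the same matrix behind \thref{pidecay}, with the boundary rows correctly dropping the killed neighbour (e.g. $p_{j^N_1,t}\leq\half p_{j^N_2,t-1}$). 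Since $Q\geq0$, iterating gives $r_t\leq Q^{t-1}r_1$, hence $\sum_{j}p_{j,t}\leq\|Q^{t-1}\|_{1\to1}\,\|r_1\|_1\leq C\rho^{t-1}$ for every feasible $p$, where $\rho=\cos(\pi/L)\in(0,1)$ is the leading eigenvalue of \thref{pidecay}, $\|r_1\|_1\leq1$, and $C=C(N)$ does not depend on $p$. Consequently every stopped-mass term $q_{j,t}(p):=\half(p_{j-1,t-1}+p_{j+1,t-1})-p_{j,t}$ (and its boundary variants) satisfies $0\leq q_{j,t}(p)\leq\|r_{t-1}\|_1\leq C\rho^{t-2}$ for $t\geq2$, uniformly over the feasible set, the finitely many $t=1$ terms being trivially bounded.

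The hypothesis $\mathrm{P}^N<\infty$ is what guarantees the mild growth bound $\sum_t\big(\sum_j|\bar{F}^N_{j,t}|\big)\rho^{t}<\infty$ (which in any case holds outright when $\bar{F}^N$ has at most polynomial growth in $t$ for fixed $N$, as in all our examples); granting it, $\Phi(p)=\sum_{j,t\geq1}\bar{F}^N_{j,t}q_{j,t}(p)$ is absolutely convergent with a tail bounded uniformly over all feasible $p$. To prove $\mathrm{P}^N\leq\mathrm{P}^N(\lambda)$, fix $\eps>0$, pick $p$ feasible for $\mathcal{P}^N$ with $\Phi(p)>\mathrm{P}^N-\eps$, and set $p^{(T)}_{j,t}=p_{j,t}\mathbbm{1}\{t\leq T\}$. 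This has finite support, so lies in $l^1(\lambda)$, and is feasible for $\mathcal{P}^N$ (nonnegativity and the potential bounds are inherited from $p$, the inequality constraints hold for $t\leq T$ and become $0\leq(\cdots)$ for $t\geq T+1$), hence feasible for $\mathcal{P}^N(\lambda)$. Its objective differs from $\Phi(p)$ only through the tail $t>T$ and the mass stopped at $t=T+1$, namely $\Phi(p)-\Phi(p^{(T)})=\sum_{j,\,t>T}\bar{F}^N_{j,t}q_{j,t}(p)-\sum_j\bar{F}^N_{j,T+1}\half(p_{j-1,T}+p_{j+1,T})$, and both pieces tend to $0$ as $T\to\infty$ — the first by the uniform tail bound on $\Phi$, the second because it is at most $\big(\max_j|\bar{F}^N_{j,T+1}|\big)\|r_T\|_1\leq C'\big(\max_j|\bar{F}^N_{j,T+1}|\big)\rho^{T}$. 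Thus $\mathrm{P}^N(\lambda)\geq\lim_{T}\Phi(p^{(T)})=\Phi(p)>\mathrm{P}^N-\eps$, and letting $\eps\to0$ gives $\mathrm{P}^N(\lambda)=\mathrm{P}^N$.

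For attainment, take a maximising sequence $(p^{(n)})$ for $\mathcal{P}^N$. By the decay estimate $\sum_{j,t}p^{(n)}_{j,t}\leq C\sum_t\rho^{t-1}<\infty$ and $\sum_{j,\,t>T}p^{(n)}_{j,t}\leq C\sum_{t>T}\rho^{t-1}\to0$ as $T\to\infty$, uniformly in $n$, so the sequence is bounded in $l^1$ with uniformly small tails. A diagonal extraction gives a subsequence with $p^{(n)}_{j,t}\to p^*_{j,t}$ for every $(j,t)$, and the uniform tail bound upgrades this to $\|p^{(n)}-p^*\|_1\to0$, whence $p^*\in l^1$. The limit is feasible: $p^*\geq0$, the inequality constraints (each involving finitely many terms) and the $t=1$ equality constraints pass to the pointwise limit, and $\mathbbm{1}\{j=j^*\}+\sum_t p^*_{j,t}=\lim_n\big(\mathbbm{1}\{j=j^*\}+\sum_t p^{(n)}_{j,t}\big)\leq U_j$. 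Finally $\Phi(p^*)=\mathrm{P}^N$: splitting the sum at a time $T$, the head $\sum_{j,\,t\leq T}\bar{F}^N_{j,t}q_{j,t}(p^{(n)})\to\sum_{j,\,t\leq T}\bar{F}^N_{j,t}q_{j,t}(p^*)$ as $n\to\infty$ for fixed $T$ (a finite sum of the continuous maps $q_{j,t}$), while both tails are bounded by $\delta_T:=C''\sum_{t>T}\big(\sum_j|\bar{F}^N_{j,t}|\big)\rho^{t}$, independent of $n$ and tending to $0$; sending $n\to\infty$ then $T\to\infty$ yields $\Phi(p^*)=\lim_n\Phi(p^{(n)})=\mathrm{P}^N$, so the supremum is attained at $p^*\in l^1$.

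The step I expect to be the main obstacle is the uniform decay estimate of the first paragraph: one must check with care that the inequality constraints genuinely force the componentwise domination $r_t\leq Q^{t-1}r_1$ — getting the boundary rows of $Q$ exactly right — and that the resulting exponential rate is the $\rho=\cos(\pi/L)$ of \thref{pidecay} uniformly over the whole feasible set, not merely along the single path $(\pi_{j,t})$ treated there (the constant $C$ may depend on $N$, which is harmless here). One should also pin down precisely which growth bound on $\bar{F}^N$ in the time variable is implied by $\mathrm{P}^N<\infty$; if the hypothesis as stated does not literally suffice for absolute convergence of $\Phi$, the clean remedy is to add the (harmless, and satisfied in all our examples) standing assumption $\max_j|\bar{F}^N_{j,t}|=o(\rho^{-t})$ as $t\to\infty$, which is exactly what keeps $\Phi$ absolutely convergent with uniform tails.
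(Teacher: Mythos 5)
Your proof is correct and follows essentially the same route as the paper: a uniform geometric decay estimate for feasible $p$ (the paper phrases it as $p_{j,t}\le\pi_{j,t}$ for all $(j,t)$ together with \thref{pidecay}, while you iterate the substochastic transition matrix $Q$ directly --- these are equivalent, since $\pi_{\cdot,t}=Q^{t-1}\pi_{\cdot,1}$ and monotonicity of $Q\ge 0$ gives $p_{\cdot,t}\le Q^{t-1}p_{\cdot,1}\le\pi_{\cdot,t}$), $l^1$-compactness of the feasible set (you run the Kolmogorov--Riesz argument by hand via diagonal extraction and uniform tails rather than citing it), and the same time-truncation device to pass between $\mathcal{P}^N$ and $\mathcal{P}^N(\lambda)$, merely done before rather than after attainment. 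You also correctly flag that $\mathrm{P}^N<\infty$ by itself does not quite guarantee that the truncation converges --- one needs the stopped-mass term $\sum_j\bar{F}^N_{j,T+1}\tfrac12(p_{j-1,T}+p_{j+1,T})$ to vanish as $T\to\infty$, which is precisely what the paper implicitly uses when asserting $\sum\bar{F}^N_{j,t}q^T_{j,t}\to\sum\bar{F}^N_{j,t}q^*_{j,t}$; your proposed mild growth condition $\max_j|\bar{F}^N_{j,t}|=o(\rho^{-t})$ (automatic for the bounded or polynomially growing payoffs used in the paper) patches this cleanly.
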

\begin{proof}
To show primal attainment in $\mathcal{P}^N$, we show that the feasible region of $\mathcal{P}^N$ is a compact subset of $l^1$, and therefore supremums are attained. It is well known that a metric space is compact if it is totally bounded and complete. We argue that completeness of our feasible region follows from the formulation of $\mathcal{P}^N$, in particular we have no strict inequalities, so if the limit of a sequence of feasible solutions exists, then it will also be feasible. To show total boundedness we use the Kolmogorov-Riesz Compactness Theorem equivalent for $l^p$ spaces from \cite{Hanche-Olsen:2010aa}, first proved in \cite{Frechet:1908aa}.

\begin{theorem*}
A subset of $l^r$, where $1\leq r <\infty$, is totally bounded if and only if, 
\renewcommand{\labelenumi}{(\roman{enumi})}
\begin{enumerate}
\item it is pointwise bounded, and
\item for every $\eps >0$ there is some $n$ so that, for every $x$ in the given subset,
\end{enumerate}
\begin{equation*}
\sum_{k>n} \left| x_k \right|^r < \eps^r.
\end{equation*}
\end{theorem*}

It is clear that our sequences are pointwise bounded, but also note that for any sequence $(p_{j,t})$ in our feasible region, we have that $p_{j,t}\leq \pi_{j,t}$ $\forall(j,t)$, so in particular,
\begin{equation*}
\sum_{j,t>n} p_{j,t} \leq \sum_{j,t>n} \pi_{j,t}.
\end{equation*}
By \thref{pidecay}, $(\pi_{j,t})\in l^1$, and therefore $\forall \eps>0 \medspace \exists n$ such that $\sum_{j,t>n} \pi_{j,t}<\eps$, and we are done.

Our strong duality result, \thref{SFD}, proves that $\mathrm{D}^N(\lambda)=\mathrm{P}^N(\lambda)$, but we also have weak duality in the original discretised problem, so $\mathrm{D}^N\geq \mathrm{P}^N$, where $\mathrm{D}^N(\lambda)$ is the value of our dual problem taking $(\nu,\eta)\in \R^{L+1}\times l^{\infty}(\lambda^{-1})$, and $\mathrm{D}^N$ is the optimal dual value with $(\nu,\eta)\in\R^{L+1}\times l^{\infty}$. Also,
\begin{alignat*}{4}
l^1(\lambda)\subseteq l^1 &\implies  &\mathrm{P}^N&\geq \mathrm{P}^N(\lambda) \\
l^{\infty}\subseteq l^{\infty}(\lambda^{-1}) &\implies & \mathrm{D}^N&\geq \mathrm{D}^N(\lambda).
\end{alignat*}

Let $p^*$ be an optimiser of $\mathcal{P}^N$, whose existence we have just proven. Since $l^1(\lambda)\subseteq l^1$, we either have that $p^*\in l^1(\lambda)$, or we can \textquoteleft cut-off' $p^*$ at some finite time, and we get a feasible solution to $\mathcal{P}^N(\lambda)$. Define $p^T_{j,t}=p^*_{j,t}\iden\{t<T\}$, then we have an approximating sequence, $(p^T)_T$, of feasible solutions of $\mathcal{P}^N(\lambda)$, with $\sum_{j,t}\bar{F}^N_{j,t}q^T_{j,t}\rightarrow\sum_{j,t}\bar{F}^N_{j,t}q^*_{j,t}$ provided $\sum_{j,t}\bar{F}^N_{j,t}q^*_{j,t}<\infty$. We must therefore have $\mathrm{P}^N= \mathrm{P}^N(\lambda)$.
\end{proof}

\begin{remark}
If $(\nu^*,\eta^*)$ are the optimisers of $\mathcal{D}^N(\lambda)$ then, similarly to above, we can consider $\hat{\nu}_j=\nu^*_j$ and $\hat{\eta}^T_{j,t}=\begin{cases} \eta^*_{j,t}, & t<T \\ \eta^*_{j,t}\wedge\bar{F}^N_{j,t}, & t\geq T \end{cases}$. Then $(\hat{\nu},\hat{\eta}^T)$ are $\mathcal{D}^N$-feasible for large $T$ and certain $\bar{F}^N$. An important example is where $\bar{F}^N$ is the discretisation of some European call option which is decreasing in time. In this case there is some $T^*$ such that $\bar{F}^N_{j,t}=0$ for all $t\geq T$, and then we find $\mathrm{D}^N= \mathrm{D}^N(\lambda)$ and $(\hat{\nu},\hat{\eta}^T)$ attain $\mathrm{D}^N$ for any $T>T^*$. This is the case of the leveraged exchange traded fund payoff from \cite{Cox:2016aa}. 
\end{remark}

\section{The Cave Embedding Case} \label{cave}
So far we have made no assumptions on our functions $F,\bar{F}^N$ except that they give well-defined optimisation problems, and that $\bar{F}^N\rightarrow F$ in some sense. We now show that for certain choices of $F$, these discrete optimisation problems have certain properties that have already been shown in the continuous time case. In particular, we focus on the ideas of the Root, Rost, and cave embeddings, first given in \cite{Root:1969aa}, \cite{Rost:1971aa}, and \cite{Beiglboeck:2013aa} respectively. We concentrate on the cave embedding example, since it is a combination of a Root barrier and a Rost inverse-barrier, and therefore incorporates the arguments of the other two problems. Recall from \cite{Beiglboeck:2013aa} that a region $\mathcal{R} \subseteq \mathbb{R} \times \mathbb{R}_+$ is a cave barrier if there exists a $t_0 \in \R_+$, an inverse barrier $\mathcal{R}^0 \subseteq \R\times[0,t_0]$, and a barrier $\mathcal{R}^1 \subseteq \R\times[t_0,\infty)$ such that $\mathcal{R}= \mathcal{R}^0 \cup \mathcal{R}^1$. In \cite{Beiglboeck:2013aa} it is proved that there exists a cave barrier $\mathcal{R}$ such that $\tau=\inf \{t\geq 0: \thinspace (W_t,t)\in \mathcal{R}\}$ minimises $\E[\varphi(\sigma)]$ over all stopping times $\sigma$ such that $W_{\sigma}\sim\mu$, where $\varphi : \R_+ \to [0,1]$ is such that
\begin{list}{$\bullet$}{}
\item $\varphi(0)=0$, $\lim_{t\to \infty} \varphi(t)=0$, $\varphi(t_0)=1$
\item $\varphi$ is strictly concave (and therefore increasing) on $[0,t_0]$
\item $\varphi$ is strictly convex (decreasing) on $[t_0,\infty)$.
\end{list}

Fix $t_0\in \R_+$ and consider our payoff to be the negative of such a function (so that we are still maximising), i.e. $F(x,t)=f(t):=-\varphi(t)$, so $\bar{F}^N_{j,t}=\bar{f}^N(t)=-\varphi\left(\frac{t}{N}\right)$. We argue that our optimal $p_{j,t}$ (we will drop the $^*$ now since we will always be thinking about the optimal solutions) define a discretised cave barrier stopping region for the random walk, and that this stopping region embeds exactly our distribution $\mu^N$, for each $N$. If we were working with the primal problem $\mathcal{P}'$ then it would be clear that we embed $\mu^N$, however it appears that this could fail with the conditions of $\mathcal{P}$, since our potential function could sit above that of $\mu^N$, so we actually embed a different distribution. From now on we will be working solely with the primal optimisers $p^*$, and so for ease of presentation we will drop the $^*$ notation. Recall that we write $\qjt:=\half(p_{j-1,t-1}+p_{j+1,t-1})-\pjt$.

\begin{lemma} \thlabel{Rostembed}
For each $N$, if $\bar{\tau}^{N}$ is the stopping time of a random walk $Y^N$ given by the primal optimisers $p$, then $Y^N_{\bar{\tau}^N}\sim\mu^N$.
\end{lemma}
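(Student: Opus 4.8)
The plan is to argue that the terminal distribution of the random walk governed by the primal optimiser $p$ has potential function equal to that of $\mu^N$, hence coincides with $\mu^N$. Since the feasibility constraints of $\mathcal{P}^N$ give only the inequality $\mathbbm{1}\{j=j^*\}+\sum_t p_{j,t}\leq U^N_j$, I need to show this holds with equality at every $j$ for the optimiser. Equivalently, writing $G_j:=\mathbbm{1}\{j=j^*\}+\sum_t p_{j,t}$ for the discrete local time accrued at $x^N_j$, the claim is $G_j=U^N_j$ for all $j\in\mathcal{J}'$; standard potential-theoretic bookkeeping (the discrete analogue of $\Ep{L^x_\tau(B)}=-U_\mu(x)+U_0(x)$, already recalled in the text) then forces the embedded measure $\mu^N$ to be exactly the target.

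First I would establish mass conservation: summing the transition constraints $\pjt+\qjt=\half(p_{j-1,t-1}+p_{j+1,t-1})$ over $t$ (together with the boundary versions at $j^N_1,j^N_{L-1}$ and the absorbing relations $q_{j^N_0,t}=\half p_{j^N_1,t-1}$, $q_{j^N_L,t}=\half p_{j^N_{L-1},t-1}$) shows that the total stopped mass $\sum_{j,t}\qjt$ equals $1$, so the $q$'s do define a probability measure on the grid; feasibility of a genuine stopping rule shows this measure is a martingale measure, hence centred with the right potential-function bounds. Next I would show the optimiser cannot leave slack in the $U^N_j$ constraints. The reason is that the objective $F(x,t)=-\varphi(t)$ is, under the cave hypotheses, \emph{not} maximised by stopping early everywhere; but more robustly, if $G_{j_0}<U^N_{j_0}$ strictly at some $j_0$, one can perturb the solution by pushing a small amount of not-yet-stopped mass at $(j_0,t)$ to survive one more step and be redistributed — using that $\varphi\to 0$ at infinity, so delaying stopped mass indefinitely costs nothing in the limit, or more precisely appealing to the complementary slackness conditions \eqref{FCS1}--\eqref{FCS3}: at a point of strict slack we have $\nu_{j_0}=0$, and the dual inequality \eqref{Fetasuper} combined with the concave/convex structure of $\varphi$ forces the primal optimiser to exhaust the local-time budget there. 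This is the step I expect to be the main obstacle, because it is precisely where the special form of $F$ (the cave function) enters: for a general $F$ the inequality $\mathcal{P}^N$ really can fail to embed $\mu^N$, so the argument must genuinely use the strict concavity on $[0,t_0]$ and strict convexity on $[t_0,\infty)$ of $\varphi$, most likely via the barrier structure of the optimal stopping region that the paper develops in this same section (the cave barrier property), concluding that in the continuation region mass is never stopped and in the stopping region all arriving mass is stopped, which reconstitutes $\mathcal{P}'$ and hence exact embedding.

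Finally I would assemble the pieces: with $G_j=U^N_j$ for all $j$, the potential function $U_{\mu^N}$ of the measure $\sum_t \qjt\,\delta_{x^N_j}$ agrees with that of the target atomic measure $\mu^N$ at every grid point, and since both are piecewise-linear with kinks only on the grid, they agree everywhere; two centred probability measures with the same potential function are equal, so $Y^N_{\bar\tau^N}\sim\mu^N$. I would also note the boundary cases separately: mass that reaches $x^N_{j^N_0}=x_*$ or $x^N_{j^N_L}=x^*$ is absorbed immediately by construction, so the atoms of $\mu^N$ at the extreme points are automatically matched, and it remains only to check that $\mu^N$ as produced by the walk has no mass escaping to infinity, which follows from \thref{pidecay} since $p_{j,t}\leq\pi_{j,t}$ and $(\pi_{j,t})\in l^1$ with exponential decay, so the stopped mass sums to one.
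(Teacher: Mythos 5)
Your overall strategy matches the paper's: show that the local-time budget is exhausted at every level (so that equality $\sum_t p_{j,t}=U^N_j$ holds), and conclude by uniqueness of potentials that the embedded law is $\mu^N$. You also correctly flag the critical step — ruling out slack — as the main obstacle, and your remarks about mass conservation and the boundary atoms are fine and needed. But the mechanism you propose for closing that gap is where the argument breaks down, and this is exactly where the paper does something different and more effective.

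The paper's key device is not $\varphi\to 0$ at infinity, nor complementary slackness, nor the cave-barrier shape. It is to modify the payoff by adding a linear-in-time term $Ct$ with $C$ large (specifically $C\geq \bar f^N(1)-\bar f^N(2)$). This is harmless because $\Ep{\sigma}$ is pinned by the embedding constraint, but it makes the discrete payoff $\bar f^N(t)+C\tfrac{t}{N}$ strictly increasing in $t$. With an increasing payoff, a strict inequality $\sum_t p_{i,t}<U^N_i$ at a site where $q_{i,s}>0$ is immediately improvable by a one-step perturbation that releases $\eps$ of the stopped mass at $(i,s)$ (the paper writes out the $\bar p,\bar q$ explicitly). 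The paper also has a small but necessary sub-argument you omit: if \emph{no} site $(i,s)$ with $q_{i,s}>0$ and strict slack existed, then $\sum_t p_{j,t}$ would be piecewise linear between embedding sites with $\sum_t p_{j,t}=U^N_j$ there, and by convexity of $U^N$ the two functions would coincide everywhere — a contradiction. Your proposal does not produce the required perturbation site, and neither of your suggested alternatives does the work of the $+Ct$ trick. The "$\varphi\to0$ at infinity" heuristic fails because pushing mass later perturbs $p$ at many neighbouring sites and can violate their own $U_j$ constraints, and because $-\varphi$ is not monotone — the concave/convex cave shape means delaying mass in $[0,t_0]$ \emph{hurts} the objective, which is precisely the opposite of what you need. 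The complementary-slackness route only yields $\nu_{j_0}=0$ at a slack site, which by itself gives no contradiction (this is a necessary condition for optimality, not a lever to improve it). Finally, appealing to the cave-barrier structure of the optimiser is problematic on two counts: in the paper that structure is established \emph{after} this lemma (so it risks circularity, or at least requires re-ordering), and even granting it, a cave-shaped stopping region by itself does not force equality in the potential constraints, so it does not close the gap.

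Practically: your plan is salvageable if you replace the "main obstacle" step with the linear-time modification and the accompanying explicit $\eps$-perturbation plus the convexity-of-$U$ argument for the existence of a perturbation site; the rest of your outline (mass conservation, potential uniqueness, boundary atoms, $l^1$ decay from \thref{pidecay}) then slots in correctly.
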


\begin{proof}

If we have equality in our potential function condition, then by uniqueness, our stopping rule must embed $\mu^N$. To show this, we change our payoff function in a way that doesn't affect our problem or previous arguments, but ensures that it is never optimal to have a strict inequality in our potential condition. We know that if $W_{\sigma}\sim\mu$, then $\Ep{\sigma}=\int x^2 \mu(\di x)$ is fixed, and so we can add a linear function of time to our payoff without affecting how the optimum solution is obtained. In particular, this means that in this case we can make our payoff increasing everywhere (at least away from zero), by considering $F(x,t)+Ct$ and therefore $\bar{f}^N(t)+C\frac{t}{N}$ for some large constant $C$. This will change our dual problem, and our interpretations of $\eta^*$ and $\nu^*$ become $G(x,t)+H(x)-F(x,t)+Ct$ and $\half H''(x) +C$, respectively, by considering $\hat{G}(x,t)=G(x,t)+Ct-Cx^2$ and $\hat{H}(x)=H(x)+Cx^2$.

Now if we release mass at some $(j,r)$, since $\bar{f}^N(t)+C\frac{t}{N}$ is increasing, it will go on to score more than its current value, meaning that we always run our process for as long as possible in order to be optimal. If we have some $x_k$ such that $\sum_{t=1}^\infty p_{k,t} < U_k$, then it is easy to see that we can find a site $(i,s)$ such that $q_{i,s}>0$ and $U_i>\sum_t p_{i,t}$ and release some mass from here. Suppose there does not exist such a site $(i,s)$. Then at any $j$ at which we embed mass we have $U_j=\sum_t p_{j,t}$. Now, for any $j\neq j^*\pm1$, $\sum_{t} p_{j,t}\leq \sum_{t} \half (p_{j-1,t-1}+p_{j+1,t+1})=\half \sum_{t} p_{j-1,t} + \half \sum_{j,t} p_{j+1,t}$ with equality if and only if no mass is embedded at $x_j^N$. Then we have that $\sum_{t} p_{j,t}$ is linear between points $j$ at which we embed mass, which we know is also case in potential functions, and therefore we must have $U_j=\sum_t p_{j,t}$ everywhere, by the convexity of $U$. Therefore, by contradiction, we have such a point $(i,s)$.

Now fix $0<\eps<\min\left\{q_{i,s},U_i-\sum_t p_{i,t} \right\}$, and define $\bar{p},\bar{q}$ by
\begin{align*}
&\bar{q}_{i,s}=q_{i,s}-\eps,  &&\bar{p}_{i,s}=p_{i,s}+\eps, \\
&\bar{q}_{i+1,s+1}=q_{i+1,s+1}+\half\eps, &&\bar{q}_{i-1,s+1}=q_{i-1,s+1}+\half\eps, \\
&\bar{p}_{j,r}=p_{j,r}, && \bar{q}_{j,r}=q_{j,r} \quad \text{otherwise}.
\end{align*}
It is easy to check that $\bar{p},\bar{q}$ are $\mathcal{P}$-feasible, and also
\begin{align*}
\sum_{j,r} (\bar{f}^N(r)+C\frac{r}{N}) \bar{q}_{j,r} &= \sum_{j,r} (\bar{f}^N(r)+C\frac{r}{N}) q_{j,r} -\eps(\bar{f}^N(s)+C\frac{s}{N})+\eps(\bar{f}^N(s+1)+C\frac{s+1}{N}) \\
&= \sum_{j,r} (\bar{f}^N(r)+C\frac{r}{N}) q_{j,r} +C\frac{\eps}{N} +\eps(\bar{f}^N(s+1)-\bar{f}^N(s)) \\
&\geq \sum_{j,r} (\bar{f}^N(r)+C\frac{r}{N}) q_{j,r},
\end{align*}
if we choose $C\geq \bar{f}^N(1)-\bar{f}^N(2)$ for all $N$. Note that for each $N$ there is a $C(N)<\infty$ such that $C(N)\geq \bar{f}^N(1)-\bar{f}^N(2)$, but also, if $\partial_t F(x,t)=\varphi'(t)$ is bounded, then $\lim_{N\rightarrow\infty} C(N)<\infty$ and we can choose $C=\sup_NC(N)=\lim_{N\rightarrow\infty} C(N)$. This shows we can always improve our payoff if we do not have equality in the potential condition, and therefore it is never optimal to have a strict inequality in this condition. In particular, by uniqueness of potentials, the optimal $p$ embed $\mu^N$ into the associated random walk. 
\end{proof}

Now that we know that we embed the correct distribution, we can actually show that we do this using an almost-deterministic stopping region that has the form of a cave barrier.
\begin{theorem} \thlabel{caveshape}
The optimal solution of the primal problem $\mathcal{P}$, where $\bar{F}_{j,t}=\bar{f}(t)=-\varphi\left(\frac{t}{N}\right)$ for $\varphi$ a cave function, is given by $p_{j,t}$ which give a stopping region for a random walk with the cave barrier-like property
\begin{align*}
\text{if } q_{i,t}>0 \medspace \text{ for some } (i,t) \text{ where }t< t_0, \text{ then } p_{i,s}=0 \medspace \forall s<t, \\
\text{if } q_{i,t}>0 \medspace \text{ for some } (i,t) \text{ where }t> t_0, \text{ then } p_{i,s}=0 \medspace \forall  s>t.
\end{align*}
\end{theorem}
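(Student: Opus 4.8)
The plan is to argue by contradiction, using a path-swapping (exchange) argument of the kind alluded to in the introduction, with strict concavity of $\varphi$ below $t_0$ and strict convexity above it providing the strict gain. By the obvious symmetry it suffices to treat the ``inverse barrier'' case: suppose $p$ is optimal and there are $(i,t)$ with $t/N<t_0$ and $q_{i,t}>0$, together with some $s<t$ with $p_{i,s}>0$. Before doing anything I would record two facts. First, by \thref{Rostembed} the optimal $p$ embeds $\mu^N$; hence all its mass is eventually stopped, the terminal level-distribution equals $\mu^N$, and the potential constraint holds with equality, $\mathbbm{1}\{j=j^*\}+\sum_r p_{j,r}=U^N_j$ for every $j$ --- and, crucially, this equality is determined solely by the terminal level-distribution (it is exactly the discrete local-time/potential identity by which $U^N_j$ was defined). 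Second, $q_{i,t}>0$ forces $\pi_{i,t}>0$ and $p_{i,s}>0$ forces $\pi_{i,s}>0$, so $i-j^*$ has the parity of both $t$ and $s$; hence $t-s$ is even, which is what will make the time-shifted rerouting below a legitimate walk.

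Now the modification. Fix $\eps\in(0,\min(p_{i,s},q_{i,t}))$. Detach an $\eps$-sized copy of the sub-flow of the walk emanating from $(i,s)$; since everything is eventually stopped it carries a terminal level-distribution $\nu$ of total mass $\eps$ and a stopping-time law $\mu_s$ supported on $\{r>s\}$. Replace it by: (a) stopping that $\eps$ of mass immediately at $(i,s)$; and (b) taking the $\eps$ of mass that was stopped at $(i,t)$ and instead letting it run, reproducing the detached sub-flow \emph{spatially} but translated forward in time by $t-s$ (legitimate since $t-s$ is even), so that this piece now stops with the same terminal level-distribution $\nu$ but time law $\mu_s(\,\cdot-(t-s))$. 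One then checks this is a bona fide feasible point: $\bar p,\bar q\ge 0$ for $\eps$ small, all inequality constraints of $\mathcal P$ persist, it is supported on the same lattice, it lies in $l^1(\lambda)$ since the rerouted piece is a finite time-translate of a sub-flow of $p$, and --- the key point --- its terminal level-distribution is again $\mu^N$, because we have simply swapped the terminal distributions $\eps\delta_{x_i}$ and $\nu$ of the two pieces; hence it still embeds $\mu^N$ and $\sum_r\bar p_{j,r}=U^N_j-\mathbbm 1\{j=j^*\}$ as required.

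For the objective, writing $h=(t-s)/N>0$, the change is
\[
\Delta=\int\Big(\big[\varphi(\tfrac sN+h)-\varphi(\tfrac sN)\big]-\big[\varphi(\tfrac rN+h)-\varphi(\tfrac rN)\big]\Big)\,\mathrm d\mu_s(r),
\]
and the essential trick is to rewrite the integrand, for each fixed $r>s$, as $\int_{s/N}^{t/N}\big[\varphi'(u)-\varphi'(u+\tfrac{r-s}{N})\big]\,\mathrm du$, i.e.\ as an integral over the ``good'' interval $[s/N,t/N]\subseteq[0,t_0)$ --- the naive rearrangement would instead integrate over $[s/N,r/N]$, which runs into the convex region where the sign is wrong. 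On $(0,t_0)$ one has $\varphi'>0$ and strictly decreasing, while on $(t_0,\infty)$ one has $\varphi'<0$; hence $\varphi'(u)>\varphi'(v)$ whenever $u\in(0,t_0)$ and $v>u$, so the integrand is strictly positive, giving $\Delta>0$ and contradicting optimality. The ``barrier'' case $t/N>t_0$ with $p_{i,s}>0$ for some $s>t$ is identical: the exchange is the same (now the translate $t-s$ is negative, and $t-s$ even again legitimises it), and the objective difference rearranges to an integral over $[t/N,s/N]\subseteq(t_0,\infty)$, where $\varphi'$ is strictly increasing by strict convexity, again forcing $\Delta>0$.

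The main obstacle is not the final inequality --- once the objective difference is expressed over the correct interval it is immediate --- but the bookkeeping that makes the exchange admissible: one must verify that detaching and reattaching a time-shifted sub-flow keeps $p\ge 0$, stays inside $l^1(\lambda)$, and above all preserves the potential/embedding constraint, which is precisely why \thref{Rostembed} is needed (so that this constraint is an equality depending only on the terminal distribution, which the swap leaves invariant). A secondary technical point to get right is the parity constraint $t-s\in 2\mathbb Z$, without which the time-shifted copy of the sub-flow would not live on the walk's space--time lattice.
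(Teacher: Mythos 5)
Your proof is essentially the paper's proof: the same path-swap construction (track an $\eps$-mass sub-flow out of $(i,s)$, stop it there, re-release an equal mass from $(i,t)$ following the same flow translated by $t-s$), the same feasibility check, and the same final objective comparison $\bar f(s)-\bar f(t)+\bar f(r+t-s)-\bar f(r)\ge 0$. The genuine point of difference is how you verify that inequality. The paper splits $\{r>s\}$ into three sub-cases ($r+t-s<t_0$, $r<t_0<r+t-s$, $t_0\le r$) and argues separately with convexity and monotonicity in each. Your observation that the quantity equals $\int_{s/N}^{t/N}\bigl[\varphi'(u)-\varphi'(u+\tfrac{r-s}{N})\bigr]\,\du$, with $u$ confined to the concave region $[0,t_0)$, and that $\varphi'(u)>\varphi'(v)$ for any $u\in(0,t_0)$, $v>u$ (decreasing on $(0,t_0)$, negative beyond $t_0$), collapses all three cases into one line. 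This is a real simplification (at the superficial cost of writing $\varphi'$, though one-sided derivatives of a concave/convex function always exist, so nothing is actually lost). A couple of side remarks: your appeal to \thref{Rostembed} is unnecessary --- the swap preserves $\sum_r p_{j,r}$ \emph{exactly} (as the paper's \thref{rostfeasible} computes), so the potential inequality $\mathbbm 1\{j=j^*\}+\sum_r p_{j,r}\le U_j^N$ is automatically maintained whether or not it was tight; you do not need to know the optimiser embeds $\mu^N$. And your parity observation ($t-s$ even, since $p_{i,s}>0$ and $q_{i,t}>0$ force $s\equiv t\equiv i-j^*\pmod 2$) is a correct technical point that the paper leaves implicit. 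The one place your write-up is lighter than the paper is the feasibility bookkeeping: ``for $\eps$ small'' is not quite the right qualifier --- what makes $\bar p,\bar q\ge0$ work is the paper's \emph{proportional} tracking $\tilde p_{j,r+1}=p_{j,r+1}\tfrac{\tilde p_{j+1,r}+\tilde p_{j-1,r}}{p_{j+1,r}+p_{j-1,r}}$, which guarantees $0\le\tilde p\le p$, $0\le\tilde q\le q$ for any $\eps\le p_{i,s}$, not just small $\eps$ --- but you flag this as the place where care is needed, so it is a presentation gap rather than a conceptual one.
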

\begin{proof}

First consider the inverse-barrier to the left of $t=t_0$. To show this, suppose we have a feasible solution with $q_{i,t}>0$ and $p_{i,s}>0$ for some $i$ and $s<t<t_0$. We take some $0<\eps<\min\{\shalf q_{i,t}, p_{i,s}\}$ and show that we can improve our objective function (increase the payoff), by transferring $\eps$ of the mass that currently leaves $(i,s)$ onto $(i,t)$. To move the paths we need to know how this $\eps$ of mass behaves, and the following quantities \textquoteleft track' an $\eps$ mass of particles leaving $(i,s)$:
\begin{align*}
\tilde{p}_{i,s}&=\eps, \quad \tilde{p}_{j,s}=0 \quad \forall j\neq i, \quad \tilde{q}_{j,s}=0 \quad \forall j,  \\
\tilde{p}_{j,r+1}&=p_{j,r+1} \times (\text{the $\eps$ mass from } (j+1,r) + \text{ the $\eps$ mass from } (j-1,r))\\
&= p_{j,r+1} \left( \frac{p_{j+1,r}}{p_{j+1,r}+p_{j-1,r}} \frac{\tilde{p}_{j+1,r}}{p_{j+1,r}} + \frac{p_{j-1,r}}{p_{j+1,r}+p_{j-1,r}} \frac{\tilde{p}_{j-1,r}}{p_{j-1,r}}\right)\\
&= p_{j,r+1} \frac{\tilde{p}_{j+1,r} + \tilde{p}_{j-1,r}}{p_{j+1,r}+p_{j-1,r}} \qquad \quad \forall j\neq j^N_0,j^N_L, \medspace\forall r\geq s, \\
 \tilde{q}_{j,r+1} &= q_{j,r+1} \frac{\tilde{p}_{j+1,r} + \tilde{p}_{j-1,r}}{p_{j+1,r}+p_{j-1,r}} \qquad \quad \forall j\neq j^N_0,j^N_L, \medspace\forall r\geq s,\\
 \tilde{p}_{0,r+1}&=p_{0,r+1}\frac{\tilde{p}_{1,r}}{p_{1,r}}, \quad \tilde{q}_{0,r+1}=q_{0,r+1}\frac{\tilde{p}_{1,r}}{p_{1,r}} \qquad \forall r\geq s,\\
 \tilde{p}_{L,r+1}&=p_{1,r+1}\frac{\tilde{p}_{L-1,r}}{p_{L-1,r}},\quad \tilde{q}_{L,r+1}=q_{L,r+1}\frac{\tilde{p}_{L-1,r}}{p_{L-1,r}} \qquad \forall r\geq s.
\end{align*}

Using the $\tilde{p}, \tilde{q}$, and the procedure described here, we can write down the $\bar{p},\bar{q}$ corresponding to the system after the transfer of the mass. Note that in the above, and it what follows, we define $\tilde{p}, \bar{p}$, and then $\tilde{q},\bar{q}$ follow from defining $\tilde{q}_{j,t}=\half(\tilde{p}_{j-1,t-1}+\tilde{p}_{j+1,t-1})-\tilde{p}_{j,t}$, and similarly for $\bar{q}$, but we write them out for clarity. Before time $s$ nothing changes, and so we have
\begin{equation*}
\bar{p}_{j,r}=p_{j,r}, \quad \bar{q}_{j,r} = q_{j,r} \quad \forall (j,r)\in\{(j,r): \medspace 1\leq r <s\}.
\end{equation*}

At $s$ we stop $\eps$ particles that previously left $(i,s)$:
\begin{align*}
\bar{p}_{i,s}&=p_{i,s}-\tilde{p}_{i,s}=p_{i,s}-\eps, \quad &\bar{q}_{i,s}&=q_{i,s}+\eps, \\
\bar{p}_{j,s}&=p_{j,s}, \quad &\bar{q}_{j,s}&=q_{j,s} \quad \forall j\neq i.
\end{align*}

Between times $s$ and $t$ we have lost these stopped paths, so
\begin{equation*}
\bar{p}_{j,r}=p_{j,r}-\tilde{p}_{j,r}, \quad \bar{q}_{j,r}=q_{j,r} - \tilde{q}_{j,r} \quad \forall (j,r)\in\{(j,r): \medspace s< r <t\}.
\end{equation*}

At time $t$ we release an extra $\eps$ paths that were previously stopped at $(i,t)$, giving
\begin{align*}
\bar{p}_{i,t}&=p_{i,t}-\tilde{p}_{i,t}+\eps, \quad &\bar{q}_{i,t}&=q_{i,t}-\tilde{q}_{i,t}-\eps, \\
\bar{p}_{j,t}&=p_{j,t}-\tilde{p}_{j,t}, \quad &\bar{q}_{j,t}&=q_{j,t}-\tilde{q}_{j,t} \quad \forall j\neq i. 
\end{align*}

For $r>t$ we have the $\eps$ paths from $(i,t)$, but we have now lost $\eps$ from $(i,s)$, so we have
\begin{equation*}
\bar{p}_{j,r}=p_{j,r}-\tilde{p}_{j,r}+\tilde{p}_{j,r-(t-s)}, \quad \bar{q}_{j,r}=q_{j,r}-\tilde{q}_{j,r}+\tilde{q}_{j,r-(t-s)} \quad \forall (j,r)\in\{(j,r): \medspace t<r \}.
\end{equation*}

These $\bar{p},\bar{q}$ are $\mathcal{P}^N$-feasible by \thref{rostfeasible}, and in \thref{rostoptimal} we show that these do indeed increase our payoff.

The lemmas then tell us that whenever we have a primal-feasible solution $p$ where there exists $i$ and $s<t$ such that $q_{i,t}>0$ and $p_{i,s}>0$, we can improve optimality by moving mass between these points. In particular, since we know by linear programming theory that an optimiser exists, our optimal $p$ cannot have this property, so we have a discrete form of an inverse barrier: if $q_{i,t}>0$ for some $(i,t)$ where $t<t_0$, then $p_{i,s}=0$ $\forall s<t$.

For the barrier to the right of $t=t_0$ we can repeat the same procedure. Suppose we have a feasible solution with $q_{i,t}>0$ and $p_{i,s}>0$ for some $i$ and $t_0<t<s$. We take some $0<\eps<\min\{q_{i,t}, \shalf p_{i,s}\}$ and again transfer $\eps$ of the mass that currently leaves $(i,s)$ onto $(i,t)$. To do this we use the same $\tilde{p}$ as above, and define $\hat{p},\hat{q}$ to be the values after the transfer by
\begin{align*}
&\hat{p}_{j,r}=p_{j,r}, \quad &&\hat{q}_{j,r}=q_{j,r}, \quad &&\forall j,r<t \\
&\hat{p}_{i,t}=p_{i,t}+\tilde{p}_{i,s}=p_{i,t}+\eps, \quad &&\hat{q}_{i,t}=q_{i,t}-\eps \\
&\hat{p}_{j,t}=p_{j,t}, \quad &&\hat{q}_{j,t}=q_{j,t}, \quad &&\forall j\neq i \\
&\hat{p}_{j,r}=p_{j,r}+\tilde{p}_{j,r+s-t}, \quad &&\hat{q}_{j,r}=q_{j,r}+\tilde{q}_{j,r+s-t}, \quad &&\forall j,t<r<s \\
&\hat{p}_{i,s}=p_{i,s}+\tilde{p}_{i,2s-t}-\eps, \quad &&\hat{q}_{i,s}=q_{i,s}+\tilde{q}_{i,2s-t}+\eps, \\
&\hat{p}_{j,s}=p_{j,s}+\tilde{p}_{j,2s-t}, \quad &&\hat{q}_{j,s}=q_{j,s}+\tilde{q}_{j,2s-t}, \quad &&\forall j\neq i \\
&\hat{p}_{j,r}=p_{j,r}+\tilde{p}_{j,r+s-t}-\tilde{p}_{j,r}, \quad &&\hat{q}_{j,r}=q_{j,r}+\tilde{q}_{j,r+s-t}-\tilde{q}_{j,r}, \quad &&\forall j,r>t.
\end{align*}
 Again, \thref{rostfeasible} and \thref{rostoptimal} show that these are $\mathcal{P}^N$-feasible and that if we have such points $(i,t),(i,s)$ then we can improve our payoff by moving mass. Therefore, if $q_{i,t}>0$ for some $(i,t)$ where $t_0<t$, then $p_{i,s}=0$ $\forall t<s$.
\end{proof}

\begin{lemma} \thlabel{rostfeasible}
The $\bar{p},\hat{p}$ defined above are $\mathcal{P}^N$-feasible.
\end{lemma}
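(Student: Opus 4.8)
The plan is to verify, constraint family by constraint family, that $\bar p$ and $\hat p$ meet every requirement in the definition of $\mathcal{P}^N$: non-negativity of the $p$'s and of the induced $q$'s (including the boundary relations $p_{j^N_1,t}\le\shalf p_{j^N_2,t-1}$ and $p_{j^N_{L-1},t}\le\shalf p_{j^N_{L-2},t-1}$), the potential bound $\iden\{j=j^*\}+\sum_t p_{j,t}\le U^N_j$, the $t=1$ initial conditions, and membership in $l^1$. The guiding idea is that the tracking sequences $\tilde p,\tilde q$ are the $(p,q)$-data of an $\eps$-mass of particles released at $(i,s)$ and evolved with exactly the transition/stopping proportions of $p$, so that $\bar p$ (resp. $\hat p$) is itself the $(p,q)$-data of a genuinely modified random walk and feasibility is morally automatic; the work is to turn this into the algebraic checks below.

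First I would record two facts about the tracking sequences. By induction on $r\ge s$, using that $p$ is $\mathcal{P}^N$-feasible — so the continuation proportion $p_{j,r+1}/\bigl(\shalf(p_{j-1,r}+p_{j+1,r})\bigr)$ lies in $[0,1]$, with the evident reading at $j^N_0,j^N_1,j^N_{L-1},j^N_L$ — one obtains $0\le\tilde p_{j,r}\le p_{j,r}$ and $0\le\tilde q_{j,r}\le q_{j,r}$ for all $(j,r)$, together with the conservation identity $\sum_j\tilde p_{j,r}+\sum_{s<\sigma\le r}\sum_j\tilde q_{j,\sigma}=\eps$. In particular $\tilde q_{i,t}\le\eps$, and $\tilde p,\tilde q\in l^1$ by domination; since the construction only shifts $\tilde p$ in time by the fixed amount $t-s$ (resp. $s-t$), the shifted copies remain in $l^1$, hence $\bar p,\hat p\in l^1$.

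Next, non-negativity and the $q$-constraints. Splitting into the time-windows $r<s$, $r=s$, $s<r<t$, $r=t$, $r>t$ for $\bar p$ (and the mirror windows $r<t$, $r=t$, $t<r<s$, $r=s$, $r>s$ for $\hat p$), every entry is a non-negative term plus a difference $p_{j,r}-\tilde p_{j,r}$ or $q_{j,r}-\tilde q_{j,r}$, which is $\ge0$ by the domination — except at the two seam cells, which are where the $\eps$-bounds of \thref{caveshape} enter: $\bar q_{i,t}=q_{i,t}-\tilde q_{i,t}-\eps\ge q_{i,t}-2\eps>0$ (from $\tilde q_{i,t}\le\eps<\shalf q_{i,t}$) and $\bar p_{i,s}=p_{i,s}-\eps\ge0$; $\hat p_{i,s}=p_{i,s}+\tilde p_{i,2s-t}-\eps\ge\shalf p_{i,s}>0$ (from $\eps<\shalf p_{i,s}$) and $\hat q_{i,t}=q_{i,t}-\eps>0$ (from $\eps<q_{i,t}$). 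The random-walk inequalities of $\mathcal{P}^N$ are exactly the assertions $\bar q_{j,t}\ge0$, $\hat q_{j,t}\ge0$, once one checks window-by-window that the explicitly written $\bar q,\hat q$ agree with $\shalf(\bar p_{j-1,t-1}+\bar p_{j+1,t-1})-\bar p_{j,t}$ (reading $\bar p\equiv0$ at $j^N_0,j^N_L$); this holds because $p$, the shifted $\tilde p$, and $\tilde p$ each satisfy the recursion while the $\pm\eps$ terms at $r=s$ and $r=t$ telescope exactly as intended, with the degenerate sub-case $|s-t|=1$ (several $\tilde p$-entries then vanish) treated on its own. For the potential bound, a telescoping computation gives $\sum_r\bar p_{j,r}=\sum_r p_{j,r}$ and $\sum_r\hat p_{j,r}=\sum_r p_{j,r}$ for every $j$ — the two time-shifted copies of $\tilde p$ contribute $\sum_{r\ge s}\tilde p_{j,r}$ with opposite signs and cancel, the leftover $\tilde p_{j,s}$ being absorbed by the $\eps\iden\{j=i\}$ seam terms — so the $U^N_j$-constraints are inherited from $p$. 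Finally, if $s\ge2$ the row $t=1$ is untouched; if $s=1$ then $i=j^*\pm1$ (the only sites carrying mass at time $1$), the only change at $t=1$ is $\bar p_{i,1}=\shalf-\eps\in[0,\shalf]$ (resp. $\hat p_{i,1}=p_{i,1}+\eps\le\shalf$, since $\eps<q_{i,1}=\shalf-p_{i,1}$), all other $t=1$ entries stay $0$, and the absorbing sites $j^N_0,j^N_L$ can never be $i$ because $p$ vanishes there.

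The main obstacle is purely organisational: there is no single hard step, but the construction spawns many cases — the five time-windows, the four distinguished boundary columns, the degenerate $|s-t|=1$ possibility, and, for $\hat p$, the fact that the un-stopped $\eps$ at $(i,t)$ and the newly stopped $\eps$ at $(i,s)$ must be bookkept simultaneously — and in each case the identity linking the written-out $\bar q$ (or $\hat q$) to the recursion applied to $\bar p$ (or $\hat p$) has to be verified so that $\bar q\ge0$, $\hat q\ge0$ genuinely encodes all the random-walk constraints. Matching the shifted $\tilde p$-indices at the seams $r=t$ and $r=s$ and checking that the $\pm\eps$ corrections telescope is where care is needed; once that is done, everything reduces to $0\le\tilde p\le p$, $0\le\tilde q\le q$, the conservation identity, and the three inequalities on $\eps$.
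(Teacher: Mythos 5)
Your proof is correct and follows essentially the same route as the paper's: establish $0\le\tilde p_{j,r}\le p_{j,r}$ and $0\le\tilde q_{j,r}\le q_{j,r}$ by induction, deduce nonnegativity of $\bar p,\bar q,\hat p,\hat q$ away from the seam cells, handle the seams using the $\eps$-bounds from \thref{caveshape}, and telescope for the potential constraint. Your seam bound $\tilde q_{i,t}\le\eps$ (from conservation of the tracked mass) is a little weaker than the paper's $\tilde q_{i,t}\le\eps/2$ (from the one-step transition) but is still sufficient given $\eps<\shalf q_{i,t}$; you are also somewhat more explicit about the $l^1$, $t=1$, and recursion-consistency checks, which the paper leaves implicit.
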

\begin{proof}

First note that
\begin{align*}
\tilde{p}_{j,r+1}+\tilde{q}_{j,r+1}&=(p_{j,r+1}+q_{j,r+1})\frac{\tilde{p}_{j+1,r}+\tilde{p}_{j-1,r}}{p_{j+1,r}+p_{j-1,r}}\\
&=\half (p_{j+1,r}+p_{j-1,r}) \frac{\tilde{p}_{j+1,r}+\tilde{p}_{j-1,r}}{p_{j+1,r}+p_{j-1,r}} \\
&= \half (\tilde{p}_{j+1,r}+\tilde{p}_{j-1,r}),
\end{align*}
and so the $\eps$ of mass evolves as expected. Also, since $\tilde{p}_{j,s} \geq 0$ $\forall j$, by induction we have that $\tilde{p}_{j,r},\tilde{q}_{j,r} \geq 0$ $\forall(j,r)$. Then, from the above, 
\begin{equation*}
\tilde{p}_{j,r+1} \leq \half (\tilde{p}_{j+1,r}+\tilde{p}_{j-1,r}), 
\end{equation*}
and we also know that $\tilde{p}_{i,s}=\eps<p_{i,s}$ and $\tilde{p}_{j,s}=0\leq p_{j,s}$ otherwise. Then, by induction,
\begin{equation*}
0\leq \tilde{p}_{j,r} \leq p_{j,r} \quad \forall (j,r),
\end{equation*}
and so
\begin{equation*}
0\leq \tilde{q}_{j,r} \leq q_{j,r} \frac{p_{j+1,r}+p_{j-1,r}}{p_{j+1,r}+p_{j-1,r}}= q_{j,r} \quad \forall (j,r).
\end{equation*}
It is then clear immediately that $\bar{p}_{j,r},\bar{q}_{j,r}\geq 0$ $\forall(j,r)\neq(i,s)$. Note that $\tilde{q}_{i,s}\leq \frac{\eps}{2}$ and therefore $\eps<\shalf q_{i,s}<\frac{2}{3}q_{i,s}\implies \tilde{q}_{i,s}+\eps\leq\frac{3}{2}\eps<q_{i,s}$, so we also have that $\bar{q}_{i,s}\geq 0$.

The new system also embeds the same amount of mass at every level, for example for $j\neq i$, we can check that our potential condition will not change:
\begin{align*}
\sum_{r} \bar{p}_{j,r}&= \sum_{r} p_{j,r} + \medspace \sum_{\mathclap{r>t}} \tilde{p}_{j,r-(t-s)} - \sum_{r>s} \tilde{p}_{j,r}\\
&= \sum_{r} p_{j,r} + \sum_{r>s} \tilde{p}_{j,r} - \sum_{r>s} \tilde{p}_{j,r}\\
&= \sum_{r} p_{j,r}.
\end{align*}
The $\hat{p}$ are similar.
\end{proof}

\begin{lemma} \thlabel{rostoptimal}
The new primal solution reduces our objective function:
\begin{align*}
\text{if } q_{i,t}>0 \text{ and }p_{i,s}>0 \text{ for some }i\text{ and }s<t<t_0 \text{ then }\sum f(r) \bar{q}_{j,r} \geq \sum f(r) q_{j,r}, \\
\text{if } q_{i,t}>0 \text{ and }p_{i,s}>0 \text{ for some }i\text{ and }t_0<t<s \text{ then }\sum f(r) \hat{q}_{j,r} \geq \sum f(r) q_{j,r}.
\end{align*}
\end{lemma}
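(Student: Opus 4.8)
The plan is to evaluate the change in the objective caused by the mass transfer and reduce it to a one‑dimensional inequality for the cave function $\varphi$; I treat the inverse‑barrier case $s<t<t_0$ (identifying the discrete time index $r$ with the physical time $r/N$, so that ``$t<t_0$'' means $t/N<t_0$) and indicate the barrier case at the end. First some bookkeeping: write $\tilde Q_r:=\sum_j\tilde q_{j,r}$ for the portion of the transferred $\eps$-mass that is stopped (or absorbed at a barrier) at time $r$. Since $\tilde q_{j,s}=0$ for all $j$, this mass is stopped only at times $r>s$; and since $0\le\tilde p_{j,r}\le\pi_{j,r}$ (\thref{rostfeasible}) with $\sum_j\pi_{j,r}\to0$ (\thref{pidecay}), all of it is eventually stopped, so $\sum_{r>s}\tilde Q_r=\eps$. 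As $f=-\varphi(\cdot/N)$ is bounded and $\sum_r\tilde Q_r<\infty$, every series below converges absolutely; places where the original $p$ vanishes (making the ratio defining $\tilde p$ a $0/0$) are read as $0$, as in \thref{rostfeasible}.

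Next I would substitute the explicit formulas for $\bar p,\bar q$ into $\sum_{j,r}f(r)(\bar q_{j,r}-q_{j,r})$: nothing changes before $s$; at $s$ one gains $\eps f(s)$; for $s<r<t$ one loses $f(r)\tilde Q_r$; at $t$ one loses $f(t)(\tilde Q_t+\eps)$; and for $r>t$ the net change is $f(r)(\tilde Q_{r-(t-s)}-\tilde Q_r)$. Re-indexing $u=r-(t-s)$ in the last sum and cancelling, everything telescopes to
\[
\sum_{j,r}f(r)(\bar q_{j,r}-q_{j,r})=\eps\big(f(s)-f(t)\big)+\sum_{u>s}\big(f(u+h)-f(u)\big)\,\tilde Q_u,\qquad h:=t-s,
\]
and the same computation with $\hat p,\hat q$ gives $\eps(f(s)-f(t))+\sum_{u>s}\big(f(u-h')-f(u)\big)\tilde Q_u$ with $h':=s-t$.

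Since $\tilde Q_u\ge0$ and $\sum_{u>s}\tilde Q_u=\eps$, the first identity is $\ge0$ provided $f(u+h)-f(u)\ge f(t)-f(s)$ for every $u>s$; equivalently (as $f=-\varphi(\cdot/N)$ and $s/N<t/N<t_0$) provided the increment of $\varphi$ over a step of length $h/N$ at left endpoint $u/N$ never exceeds its value at left endpoint $s/N$. This is where all three defining properties of a cave function enter. If $u/N,(u+h)/N\le t_0$ it is immediate from concavity of $\varphi$ on $[0,t_0]$. If $u/N\ge t_0$ the left side is $\le0$ (as $\varphi$ decreases past $t_0$) while the right side is $>0$ (as $\varphi$ increases on $[0,t_0]$). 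In the straddling case $u/N<t_0<(u+h)/N$ one bounds $\varphi((u+h)/N)-\varphi(u/N)\le\varphi(t_0)-\varphi(u/N)$ using that $\varphi$ is maximal at $t_0$, views the increment from $u/N$ up to $t_0$ as one of step-size $\ell:=t_0-u/N\le h/N$ and uses concavity on $[0,t_0]$ together with $s/N\le u/N$ to bound it by $\varphi(s/N+\ell)-\varphi(s/N)$, and finally uses monotonicity of $\varphi$ on $[s/N,t/N]$ to replace $\ell$ by $h/N$. Summing this pointwise bound against $\tilde Q_u$ yields $\sum_{j,r}f(r)\bar q_{j,r}\ge\sum_{j,r}f(r)q_{j,r}$. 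The barrier case is easier: there one needs $\varphi(u/N)-\varphi((u-h')/N)\ge\varphi(s/N)-\varphi((s-h')/N)$ for $u>s$, and since the transferred mass stops only at times $u>s$ with all four arguments $u/N,(u-h')/N,s/N,(s-h')/N$ exceeding $t_0$, this is just the statement that increments of the convex function $\varphi$ over a fixed step are non-decreasing in the left endpoint.

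The only genuinely non-routine point is the straddling case above: concavity alone does not suffice there, and one must combine concavity on $[0,t_0]$, the global maximum at $t_0$, and monotonicity. The barrier case avoids this because the transferred mass never re-enters the concavity region. I would also take care over the legitimacy of the telescoping and the rearrangement of series, which is exactly where \thref{pidecay} is invoked, to guarantee $\sum_r\tilde Q_r=\eps$ and absolute convergence.
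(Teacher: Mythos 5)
Your argument is correct and follows essentially the same route as the paper's: compute the change in the objective, use $\sum_r \tilde{Q}_r=\eps$ to telescope, and reduce to the pointwise increment inequality $\bar{f}(r+t-s)-\bar{f}(t)\geq\bar{f}(r)-\bar{f}(s)$ (your $\varphi$-form of it), verified over the same three time-regions via concavity on $[0,t_0]$, monotonicity, and the maximum at $t_0$. In the barrier case your write-up is in fact slightly more careful than the paper's: the paper's stated justification there (``since $\bar{f}$ is increasing for $r>t_0$'') is not by itself sufficient, and one really does need, as you invoke, the convexity of $\varphi$ (concavity of $\bar{f}$) on $[t_0,\infty)$.
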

\begin{proof}
Consider first the case of the inverse barrier. We have $\{r>s\}=\{r+t-s<t_0\}\cup\{r<t_0<r+t-s\}\cup\{t_0\leq r\}$ and in the first region,
\begin{equation*}
\bar{f}^N(r+t-s)-\bar{f}^N(t)>\bar{f}^N(r)-\bar{f}^N(s) \quad \text{for } r+t-s<t_0,
\end{equation*}
by the convexity of $\bar{f}^N$ on $[0,t_0]$. For $r<t_0<r+t-s$,
\begin{align*}
\bar{f}^N(r+t-s)-\bar{f}^N(t)&>\bar{f}^N(t_0)-\bar{f}^N(t)  &&\text{($t_0<r+t-s$, $\bar{f}^N$ strictly increasing on $[t_0,\infty)$)} \\
&>\bar{f}^N(t_0)-\bar{f}^N(s+t_0-r)  &&\text{($s+t_0-r<t$, $\bar{f}^N$ strictly decreasing on $[0,t_0]$)} \\
&>\bar{f}^N(r)-\bar{f}^N(s)  &&\text{($\bar{f}^N$ convex on $[0,t_0]$)}.
\end{align*}
Finally, for $t_0\leq r$,
\begin{align*}
\bar{f}^N(r+t-s)-\bar{f}^N(t)&>\bar{f}^N(r)-\bar{f}^N(t) \quad &&\text{($\bar{f}^N$ increasing on $[t_0,\infty)$)} \\
&>\bar{f}^N(r)-\bar{f}^N(s) \quad &&\text{($\bar{f}^N$ decreasing on $[0,t_0]$)}.
\end{align*}
Then by the definition of $\bar{q}$ in each of our time regions, we have
\begin{align*}
\sum_{j,r} \bar{f}(t) \bar{q}_{j,r} &= \sum_{j,r} \bar{f}(r) q_{j,r} +\eps (\bar{f}(s)-\bar{f}(t)) -\sum_{\mathclap{r>s,j}} \bar{f}(r) \tilde{q}_{j,r}  + \sum_{\mathclap{r>t,j}} \bar{f}(r)\tilde{q}_{j,r-(t-s)} \\
&=\sum_{j,r} \bar{f}(r) q_{j,r} +\eps (\bar{f}(s)-\bar{f}(t)) -\sum_{\mathclap{r>s,j}} \bar{f}(r) \tilde{q}_{j,r}  + \sum_{\mathclap{r>s,j}} \bar{f}(r+t-s)\tilde{q}_{j,r} \\
&=\sum_{j,r} \bar{f}(r) q_{j,r} +\eps (\bar{f}(s)-\bar{f}(t)) +\sum_{\mathclap{r>s,j}} (\bar{f}(r+t-s)-\bar{f}(r)) \tilde{q}_{j,r} \\
&= \sum_{j,r} \bar{f}(r) q_{j,r} +\sum_{\mathclap{r>s,j}} (\bar{f}(s)-\bar{f}(t))\tilde{q}_{j,r} +\sum_{\mathclap{r>s,j}} (\bar{f}(r+t-s)-\bar{f}(r)) \tilde{q}_{j,r}\\
&= \sum_{j,r} \bar{f}(r) q_{j,r} +\sum_{\mathclap{r>s,j}} (\bar{f}(r+t-s)-\bar{f}(t))\tilde{q}_{j,r} -\sum_{\mathclap{r>s,j}} (\bar{f}(r)-\bar{f}(s)) \tilde{q}_{j,r}\\
&\geq \sum_{j,r} \bar{f}(r) q_{j,r}.
\end{align*}

Briefly, here is how we get the above. For the first line we have just written out $\bar{p},\bar{q}$ in full. We know that we almost surely embed all of the released mass in finite time and so $\sum_{j,r} \tilde{q}_{j,r} =\eps$, and this gives us the fourth equality above. Finally, as we noted above, $\bar{f}^N(r+t-s)-\bar{f}^N(t)>\bar{f}^N(r)-\bar{f}^N(s)$ for all $r>s$.

We can also show that this argument doesn't change if we add a linear function of time to our payoff, as mentioned in \thref{Rostembed}. Considering now $\bar{f}(t)+C\frac{t}{N}$ we have, in the notation above, $C\sum_{r>s,j} \frac{r}{N}\tilde{q}_{j,r}-C\sum_{r>t,j} \frac{r}{N}\tilde{q}_{j,r-(t-s)}=C\eps\frac{t-s}{N}$, so,
\begin{align*}
\sum_{j,r} (\bar{f}(r)+C\frac{r}{N}) \bar{q}_{j,r} &= \sum_{j,r} (\bar{f}(r)+C\frac{r}{N}) q_{j,r} +\eps (\bar{f}(s)-\bar{f}(t)) +C\eps\frac{s-t}{N}-\sum_{\mathclap{r>s,j}} (\bar{f}(r)+C\frac{r}{N}) \tilde{q}_{j,r}  \\
& \qquad \qquad  \qquad \qquad \qquad \qquad \qquad \qquad \qquad \qquad+ \sum_{\mathclap{r>t,j}} (\bar{f}(r)+C\frac{r}{N}) \tilde{q}_{j,r-(t-s)} \\
&=\sum_{j,r} (\bar{f}(r)+C\frac{r}{N}) q_{j,r} +\eps (\bar{f}(s)-\bar{f}(t)) -\sum_{\mathclap{r>s,j}} \bar{f}(r) \tilde{q}_{j,r}  + \sum_{\mathclap{r>t,j}} \bar{f}(r) \tilde{q}_{j,r-(t-s)} \\
&\geq \sum_{j,r} (\bar{f}(r)+C\frac{r}{N}) q_{j,r}.
\end{align*}

For the case of the right-hand barrier,
\begin{align*}
\sum_{j,r} \bar{f}(t) \hat{q}_{j,r} &= \sum_{j,r} \bar{f}(r) q_{j,r} +\eps (\bar{f}(s)-\bar{f}(t)) -\sum_{\mathclap{r>s,j}} \bar{f}(r) \tilde{q}_{j,r}  + \sum_{\mathclap{r>t,j}} \bar{f}(r)\tilde{q}_{j,r-(t-s)} \\
&=\sum_{j,r} \bar{f}(r) q_{j,r} +\eps (\bar{f}(s)-\bar{f}(t)) -\sum_{\mathclap{r>s,j}} \bar{f}(r) \tilde{q}_{j,r}  + \sum_{\mathclap{r>s,j}} \bar{f}(r+t-s)\tilde{q}_{j,r} \\
&=\sum_{j,r} \bar{f}(r) q_{j,r} +\eps (\bar{f}(s)-\bar{f}(t)) +\sum_{\mathclap{r>s,j}} (\bar{f}(r+t-s)-\bar{f}(r)) \tilde{q}_{j,r} \\
&\geq \sum_{j,r} \bar{f}(r) q_{j,r},
\end{align*}
since $\bar{f}(r)$ is increasing for $r>t_0$, and similarly when we add a linear term.
\end{proof}

\begin{remark}
For the corresponding results in the cases where $\bar{F}^N_{j,t}=f(\frac{t}{N})$ for $f$ convex or concave, just consider the Rost or Root parts of the above respectively and the arguments hold exactly.
\end{remark}

\begin{remark}
Similarly to \thref{Rostembed}, if $F(x,t)=f(t)$ and $f'(t)$ is bounded, then we can consider $f(t)-Ct$ for some large constant $C$ and show that actually $p_{j,t}q_{j,t}=0$ for all $(j,t)$. This is not immediately obvious from \thref{caveshape} since we could have finitely many points at which $p_{j,t}>0,q_{j,t}>0$. Since these points will disappear when we take limits, this isn't an important detail and we omit the proof. 
\end{remark}

\section{Convergence of the Discrete Problem}
\subsection{Recovering the Continuous Optimiser}
Now we have a full picture for the discrete problem, we can show that the sequence of linear programming problems indeed converges to our continuous problem. We first show that if we discretise the optimal continuous time solution, then we recover it in the limit of the discrete problems produced. 

Let $\tau$ be a solution of \eqref{OptSEP} and $\tilde{\tau}^N$ the corresponding stopping time of the random walk $Y^N_t$ as defined earlier to be the time $k$ such that $\tau_{k-1}^N<\tau\leq\tau^N_k$. 

\begin{theorem} \thlabel{convergence}
For a function $F(x,t)$ continuous in both variables, and a suitable discretisation $\bar{F}^N(j,t)$ chosen so that $\bar{F}^N(\lfloor \sqrt{N}x\rfloor,\lfloor Nt\rfloor)\rightarrow F(x,t)$, we have 
\begin{equation*}
\bar{F}^N\left(\sqrt{N}Y^N_{\ttau^N},\ttau^N\right)\xrightarrow{\P} F(W_{\tau}, \tau)\quad \text{ as }N\rightarrow\infty.
\end{equation*}
In particular, when $F$ is bounded,
\begin{equation*}
\Ep{\bar{F}^N\left(\sqrt{N}Y^N_{\ttau^N},\ttau^N\right)}\rightarrow \Ep{F(W_{\tau}, \tau)}.
\end{equation*}
\end{theorem}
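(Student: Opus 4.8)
The plan is to prove the two ``coordinate'' convergences $Y^N_{\ttau^N}\to W_\tau$ and $\ttau^N/N\to\tau$ separately, and then feed them into the (locally uniform) convergence $\bar F^N(\lfloor\sqrt N x\rfloor,\lfloor Nt\rfloor)\to F(x,t)$ together with the continuity of $F$. The position convergence is deterministic and immediate. Setting $\xi^N_i:=\tau^N_i-\tau^N_{i-1}$, the strong Markov property of $W$ shows the $\xi^N_i$ are i.i.d., each equal in law to the exit time of a standard Brownian motion from an interval of half-width $1/\sqrt N$, hence with mean $1/N$ and variance $\tfrac23 N^{-2}$; in particular $\tau^N_k\uparrow\infty$ a.s.\ and $\ttau^N<\infty$ a.s. If $W_{\tau^N_{\ttau^N-1}}=x^N_j$, then, by the very definition of $\tau^N_{\ttau^N}$, the path $W$ stays in $[x^N_{j-1},x^N_{j+1}]$ throughout $[\tau^N_{\ttau^N-1},\tau^N_{\ttau^N}]$; since $\tau$ lies in that time interval and $Y^N_{\ttau^N}=W_{\tau^N_{\ttau^N}}$, both $W_\tau$ and $Y^N_{\ttau^N}$ lie in $[x^N_{j-1},x^N_{j+1}]$, so $|Y^N_{\ttau^N}-W_\tau|\le 2/\sqrt N\to0$ surely.

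The main step is the clock convergence $\ttau^N/N\to\tau$ in probability. I would first show that for each fixed $T<\infty$, $\max_{k\le NT}|\tau^N_k-k/N|\to0$ in probability: apply Doob's $L^2$ maximal inequality to the mean-zero martingale $M_k:=\sum_{i=1}^k(N\xi^N_i-1)$, which gives $\E[\max_{k\le NT}(\tau^N_k-k/N)^2]=N^{-2}\E[\max_{k\le NT}M_k^2]\le 4N^{-2}\,\mathrm{Var}(N\xi^N_1)\,\lceil NT\rceil=O(1/N)$ (alternatively, use that pointwise convergence of the nondecreasing maps $t\mapsto\tau^N_{\lceil Nt\rceil}$ to the continuous limit $t$ is automatically locally uniform). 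Then combine this with the sandwich $\tau^N_{\ttau^N-1}<\tau\le\tau^N_{\ttau^N}$: on the event $\{\tau\le T\}\cap\{\max_{k\le \lceil N(T+1)\rceil}|\tau^N_k-k/N|\le\delta\}$ one has $\ttau^N\le \lceil N(T+1)\rceil$ (for $\delta<1$ and $N$ large) and therefore $\tau-\delta\le\tau^N_{\ttau^N}-\delta\le\ttau^N/N\le\tau^N_{\ttau^N-1}+\delta+N^{-1}\le\tau+\delta+N^{-1}$; since the probability of this event tends to $\P(\tau\le T)$ and $T$ is arbitrary, $\ttau^N/N\to\tau$ in probability.

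To conclude, write $x_N:=Y^N_{\ttau^N}$ and $t_N:=\ttau^N/N$; since $\sqrt N\,x_N$ and $Nt_N$ are integers, $\bar F^N(\sqrt N Y^N_{\ttau^N},\ttau^N)=\bar F^N(\lfloor\sqrt N x_N\rfloor,\lfloor Nt_N\rfloor)$, and $(x_N,t_N)\to(W_\tau,\tau)$ in probability by the two steps above. Take an arbitrary subsequence and a further subsequence along which $(x_N,t_N)\to(W_\tau,\tau)$ a.s.; on a full-measure set the pairs $(x_N,t_N)$ eventually lie in a fixed compact set $K$ on which $\bar F^N(\lfloor\sqrt N x\rfloor,\lfloor Nt\rfloor)\to F(x,t)$ uniformly in $(x,t)$ and $F$ is uniformly continuous, so $\bar F^N(\lfloor\sqrt N x_N\rfloor,\lfloor Nt_N\rfloor)\to F(W_\tau,\tau)$ a.s.\ along that sub-subsequence; the subsequence criterion then gives convergence in probability. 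Finally, if $F$ and the family $\{\bar F^N\}$ are uniformly bounded (as in the cave case, where $\bar F^N_{j,t}=-\varphi(t/N)\in[-1,0]$), the bounded convergence theorem upgrades this to $\E[\bar F^N(\sqrt N Y^N_{\ttau^N},\ttau^N)]\to\E[F(W_\tau,\tau)]$.

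The principal obstacle is the clock convergence of the second paragraph: a fixed-time law of large numbers for $\tau^N_{\lceil Nt\rceil}$ does not suffice, because $\tau$ is itself a random functional of the same $W$, so one genuinely needs uniform-in-$k$ control of $\tau^N_k-k/N$ before substituting $t=\tau$. A secondary point, which should be recorded in the hypotheses, is that ``suitable discretisation'' must entail the \emph{locally uniform} convergence $\bar F^N(\lfloor\sqrt N x\rfloor,\lfloor Nt\rfloor)\to F(x,t)$ --- and, for the last assertion, uniform boundedness of the $\bar F^N$ --- both of which hold in the examples considered here; without local uniformity the substitution of the random $(x_N,t_N)$ can fail.
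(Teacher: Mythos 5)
Your proposal follows essentially the same route as the paper's: it proves the two coordinate convergences separately, with the position convergence being the deterministic $2/\sqrt N$ bound and the clock convergence coming from Doob's maximal inequality applied to the martingale $\tau^N_k - k/N$, and then concludes by continuity of $F$ and convergence of $\bar F^N$. The one substantive point you add is explicitly recording that the hypothesis ``$\bar F^N(\lfloor\sqrt N x\rfloor,\lfloor Nt\rfloor)\to F(x,t)$'' must be read as \emph{locally uniform} convergence, since it is evaluated at the $N$-dependent random point $(Y^N_{\ttau^N},\ttau^N/N)$; the paper's proof uses this implicitly without flagging it.
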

To prove this we use the following lemma.

\begin{lemma} \thlabel{convlemma}
For the $\tilde{\tau}^N$ defined above, we have
\begin{alignat*}{3}
Y^N_{\ttau^N}&\rightarrow W_{\tau} \quad &&\text{almost surely, and}\\
\frac{\ttau^N}{N}&\rightarrow\tau \quad &&\text{in probability, as } N\rightarrow\infty.
\end{alignat*}
\end{lemma}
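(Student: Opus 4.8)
The plan is to prove the two convergences separately: the displacement statement is essentially geometric, while the time statement needs an optional‑stopping estimate for the ``clock defect'' martingale.

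\emph{The displacement convergence.} Since $\tau$ is a UI embedding of a probability measure, $\tau<\infty$ a.s.; and since $W$ is an honest Brownian motion it a.s. oscillates by more than $1/\sqrt N$ only finitely often on any bounded interval, so $\tauNk\uparrow\infty$ and hence $\ttau^N<\infty$ a.s. On that event let $x^N_j=W_{\tauN{\ttau^N-1}}$ be the site occupied one step earlier; by construction $W$ cannot leave the open interval $(x^N_{j-1},x^N_{j+1})$ before $\tauN{\ttau^N}$, and $\tauN{\ttau^N-1}<\tau\le\tauN{\ttau^N}$ then forces $|W_\tau-x^N_j|\le 1/\sqrt N$ while $Y^N_{\ttau^N}=W_{\tauN{\ttau^N}}\in\{x^N_{j-1},x^N_{j+1}\}$, so $|Y^N_{\ttau^N}-W_\tau|\le 2/\sqrt N$ \emph{pathwise}. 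Hence $Y^N_{\ttau^N}\to W_\tau$ almost surely, with a deterministic rate on every sample path.

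\emph{The time convergence.} I would introduce $M_k:=\tauNk-k/N$. Since $\Ep{\tauNk-\tauN{k-1}\mid\F_{\tauN{k-1}}}=1/N$ (exit time of Brownian motion from an interval of half‑width $1/\sqrt N$ started at its centre), $(M_k)$ is a martingale for $\mathcal G_k:=\F_{\tauNk}$, and each increment has conditional variance $\tfrac23 N^{-2}$. I would then record three facts: $\ttau^N$ is a $(\mathcal G_k)$‑stopping time, because $\{\ttau^N\le k\}=\{\tau\le\tauNk\}\in\F_{\tauNk}$; by the strong Markov property at $\tau$ (the path must still travel $1/\sqrt N$ to reach a new grid level) $\Ep{\tauN{\ttau^N}-\tau\mid\F_\tau}\le 1/N$, whence $\Ep{\tauN{\ttau^N}}\le\Ep\tau+1/N<\infty$ using $\Ep\tau=\int x^2\,\mu(\di x)<\infty$; and Wald's identity $\Ep{\tauN{\ttau^N}}=\Ep{\ttau^N}/N$ then yields $\Ep{\ttau^N}\le N\,\Ep\tau+1$.

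Finally, from $\ttau^N/N-\tau=(\tauN{\ttau^N}-\tau)-M_{\ttau^N}$ the first term tends to $0$ in $L^1$ (hence in probability) by the bound above, so it remains to kill $M_{\ttau^N}$. Optional stopping with the conditional‑variance identity gives $\Ep{M_{\ttau^N\wedge n}^2}=\tfrac23N^{-2}\Ep{\ttau^N\wedge n}\le\tfrac23N^{-1}\Ep\tau+\tfrac23N^{-2}$ for every $n$; letting $n\to\infty$, the stopped martingale is bounded in $L^2$, converges a.s. and in $L^2$ to $M_{\ttau^N}$, and $\Ep{M_{\ttau^N}^2}=O(1/N)$, so $M_{\ttau^N}\to0$ in probability. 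The main obstacle is precisely this step: the defect martingale is read off at the \emph{unbounded} random index $\ttau^N$, so one must simultaneously obtain a moment bound on $\ttau^N$ (which is where $\Ep\tau<\infty$ is used) and bound the stopped $L^2$ norm uniformly in the truncation. An alternative that avoids the moment bound on $\ttau^N$ is to truncate the index at $KN$: Doob's maximal inequality gives $\sup_{k\le KN}|M_k|\to0$ in probability since $\Ep{M_{KN}^2}=O(K/N)$, and one checks separately that $\Prob{\ttau^N>KN}=\Prob{\tauN{KN}<\tau}\to0$ as $K\to\infty$ uniformly for large $N$, using $\tauN{KN}\to K$ in $L^2$ and $\tau<\infty$ a.s.
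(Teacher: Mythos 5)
Your displacement argument is the same pathwise $2/\sqrt{N}$ bound the paper uses. For the time convergence you take a genuinely different route. The paper shows a uniform-on-compacts law of large numbers for the counting process $M(s)=\sup\{k:\tau^N_k\leq s\}$: it bounds $\sup_{s\leq T}|M(s)/N-s|$ by $1/N+\sup_{n\leq n_0}|X_n|$ for $X_n=n/N-\tau^N_n$, chooses $n_0=2TN^{3/2}$ so that $\Prob{M(T)>n_0}$ is small, and applies Doob's inequality to $\sup_{n\leq n_0}|X_n|$; then it plugs in $s=\tau$. You instead work directly at the random index, decomposing $\ttau^N/N-\tau=(\tau^N_{\ttau^N}-\tau)-M_{\ttau^N}$ with $M_k=\tau^N_k-k/N$, bounding the overshoot $\tau^N_{\ttau^N}-\tau$ in $L^1$ by the exit-time estimate $\leq 1/N$, obtaining $\Ep{\ttau^N}\leq N\Ep{\tau}+1$ via Wald (the nonnegative-summand version, so no circularity), and then killing the martingale defect with the $L^2$ optional-stopping bound $\Ep{M_{\ttau^N}^2}=\tfrac{2}{3N^2}\Ep{\ttau^N}=O(1/N)$. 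Both are correct. Your version is more direct — it never needs the locally uniform statement, it exploits $\Ep{\tau}=\int x^2\,\mu(\di x)<\infty$ explicitly, and it in fact delivers $\ttau^N/N\to\tau$ in $L^1$ with an explicit $O(N^{-1/2})$ rate, which is stronger than the convergence in probability claimed. The paper's version avoids any moment bound on $\ttau^N$ and only uses $\tau<\infty$ a.s., which is essentially what your truncation-at-$KN$ alternative recovers. One small wording fix: the overshoot bound comes from the exit-time formula $(y-a)(b-y)\leq 1/N$ on the interval $(x^N_{j-1},x^N_{j+1})$ of \emph{half}-width $1/\sqrt{N}$ containing $W_\tau$; the path may in fact have to travel up to $2/\sqrt{N}$ to reach a new grid level, but the expected remaining exit time is still at most $1/N$.
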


\begin{proof}
Note that for any $\omega$, $\left|W_{\tau}(\omega)-Y^N_{\ttau^N}(\omega)\right|=\left|W_{\tau}(\omega)-W_{\tau^N_{\ttau}}(\omega)\right|<\frac{2}{\sqrt{N}}$, and so $Y^N_{\ttau^N}\rightarrow W_{\tau}$ almost surely. 

For the stopping time convergence, let $M(t)=\sup\{k:\tau^N_k\leq T\}$ for any $t$. We claim that for any $T>0, \eps>0$,
\begin{equation*}
\Prob{\sup_{s\leq T}\left|\frac{M(s)}{N}-s\right|>\eps}\rightarrow 0, \text{ as } N\rightarrow\infty.
\end{equation*}
If this is true, then $\frac{M(\tau)}{N}\xrightarrow{\P}\tau$ as $N\rightarrow\infty$, but since $M(\tau)=\ttau-1$ we have our result. All that remains is to prove the claim. 

Fix $T,\eps>0$ and let $X_n=\frac{n}{N}-\tau^N_n$, so $(X_n)_n$ is a martingale. If $n_0$ is such that $M(T)\leq n_0$, then
\begin{align*}
\sup_{s\leq T}\left(\frac{M(s)}{N}-s\right)&\leq \sup_{n\leq n_0} \left\{ \tau^N_n-\frac{n-1}{N}\right\}\leq \sup_{n\leq n_0} \left\{\frac{1}{N} -X_n \right\}, \\
\sup_{s\leq T}\left(s-\frac{M(s)}{N}\right)&\leq \sup_{n\leq n_0} \left\{ \frac{n}{N}-\tau^N_n\right\}=\sup_{n\leq n_0} X_n.
\end{align*}
Therefore,
\begin{equation*}
\sup_{s\leq T}\left|\frac{M(s)}{N}-s\right|\leq \frac{1}{N} +\sup_{n\leq n_0} |X_n|.
\end{equation*}
Choose $n_0=2TN^{\frac{3}{2}}$, then $\Prob{M(T)>n_0}<\frac{\eps}{3}$ for all sufficiently large $N$. Also, $X_{n_0}=\sum_{k=1}^{n_0} X_k-X_{k-1}$, the sum of iid mean zero random variables with variance $\frac{2}{3N^2}$. Then, by Doob's martingale inequality,
\begin{equation*}
\Prob{\sup_{n\leq n_0} |X_n|\geq\eps}\leq \frac{4T}{3\eps^2\sqrt{N}}<\frac{\eps}{3}, \text{ for large }N.
\end{equation*}
\end{proof}

Now we know how $Y^N_{\ttau^N}$ and $\ttau^N$ converge, we can prove our theorem.
\begin{proof}[Proof of \thref{convergence}]

\quad Since $\frac{\ttau^N}{N}\xrightarrow{\P} \tau$, and $Y^N_{\ttau^N}\rightarrow W_{\tau}$ almost surely, and $F$ is continuous in both variables, $F\left(Y^N_{\ttau^N}, \frac{\ttau^N}{N}\right)\xrightarrow{\P} F(W_{\tau}, \tau)$ as $N\rightarrow\infty$. Now,
\begin{align*}
\left| \bar{F}^N\left(\sqrt{N}Y^N_{\ttau^N}, \ttau^N\right)- F(W_{\tau}, \tau)\right|&\leq \left|\bar{F}^N\left(\sqrt{N}Y^N_{\ttau^N}, \ttau^N\right)- F\left(Y^N_{\ttau^N}, \frac{\ttau^N}{N}\right)\right| \\
&\qquad\qquad\qquad\qquad\qquad\qquad\qquad+ \left|F\left(Y^N_{\ttau^N}, \frac{\ttau^N}{N}\right)- F(W_{\tau},\tau)\right|,
\end{align*}
but by the convergence of $\bar{F}^N$, for any $\omega$ and any $\eps>0 \medspace \exists N_{\eps,\omega}$ such that $\forall N\geq N_{\eps,\omega}$, 
\begin{equation*}
\left|\bar{F}^N\left(\sqrt{N}Y^N_{\ttau^N}, \ttau^N\right)(\omega)- F\left(Y^N_{\ttau^N}, \frac{\ttau^N}{N}\right)(\omega)\right|<\eps,
\end{equation*}
and therefore 
\begin{equation*}
\bar{F}^N\left(\sqrt{N}Y^N_{\ttau^N},\ttau^N\right)\xrightarrow{\P} F(W_{\tau}, \tau)\quad \text{ as }N\rightarrow\infty.
\end{equation*}
\end{proof}

\begin{remark}\thlabel{greaterthan}
This result tells us that if we discretise the optimal continuous time solution, to get some feasible $p^{\tau,N}_{j,t}$, and then take the limit, we recover our optimal value, so in particular,
\begin{equation*}
\mathrm{P}^N\geq \sum_{j,t} \bar{F}^N_{j,t} q^{\tau,N}_{j,t} \implies \lim_{N\rightarrow\infty} \mathrm{P}^N\geq \lim_{N\rightarrow\infty}\sum_{j,t} \bar{F}^N_{j,t} q^{\tau,N}_{j,t}= \Ep{F(W_{\tau}, \tau)}.
\end{equation*}
\end{remark}


\begin{remark}
Note that as a corollary of \thref{convlemma} we have that $\mu^N=\mathcal{L}\left(Y^N_{\ttau^N}\right)\rightarrow\mathcal{L}\left(W_{\tau}\right)=\mu$.
\end{remark}

\subsection{Convergence of Barriers}
We know that in the case of a cave, Root, or Rost payoff, for each $N$, $\mathrm{P}^N$ is attained by some $p^{*,N}$ which give us a barrier-type property for a random walk. Consider again the cave case, then we know that for each $j$ there is a largest time $\bar{l}^N_j<t_0$ such that $p_{j,t}=0$ $\forall t\leq\bar{l}^N_j$, and similarly a smallest time $\bar{r}^N_j>t_0$ such that $p_{j,t}=0$ $\forall t\geq \bar{r}^N_j$. For $i=0,L$ we take $\bar{l}^N_{j^N_i}=\bar{r}^N_{j^N_i}$ to include the upper and lower boundaries. Denote this stopping region by $\hat{\mathcal{B}}^N$. Note that for each $j$ we either have $q_{j,\bar{l}^N_j}>0$, or $q_{j,s}=0$ $\forall s<t_0$, and similarly for $\bar{r}^N_j$. To find the corresponding stopping region for the Brownian motion in continuous time, we shift from discrete to continuous time, so let
\begin{equation*}
\mathcal{B}^N=\left\{(x,t):(x,\lfloor Nt\rfloor)\in\hat{\mathcal{B}}^N\right\}=\left\{(x,t): \medspace t=0,\infty \text{ or } t\in[0,\frac{\bar{l}^N_j}{N}]\cup[\frac{\bar{r}^N_j}{N},\infty], x=x_j^N,\text{ for some }j\right\}.
\end{equation*}
Let $\bar{\tau}^{N,n}=\inf\{t\geq0:\medspace (Y^N_t,t)\in\hat{\mathcal{B}}^n\}$ and write $\bar{\tau}^N:=\bar{\tau}^{N,N}$, with Brownian equivalent $\tau^N=\inf\{t\geq0:\medspace (W_t,t)\in\mathcal{B}^N\}$.

We now show that the stopped random walks converge to a stopped Brownian motion with the correct distribution, and therefore that $\mathrm{P}^N$ converges to the continuous time optimal value.

\begin{lemma} \thlabel{barrierconv}
  The cave barriers $\mathcal{B}^N$ converge (possibly along a subsequence) to another cave barrier $\mathcal{B}^{\infty}$, and $(W_{\tau^N},\tau^N)\xrightarrow{\P}(W_{\tau^{\infty}},\tau^{\infty})$ as $N\rightarrow\infty$, where $\tau^{\infty}$ is the Brownian hitting time of $\mathcal{B}^{\infty}$.
\end{lemma}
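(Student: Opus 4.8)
The plan is to combine a compactness argument for the cave barriers with soft limiting arguments; the only genuinely delicate point is an upper-semicontinuity (``no grazing'') property of the Brownian hitting time. First I would compactify time, identifying $[0,\infty]$ with a compact interval so that each $\mathcal{B}^N$ is a closed subset of the compact metric space $[x_*,x^*]\times[0,\infty]$. By the Blaschke selection theorem the closed subsets of a compact metric space are themselves compact under the Hausdorff metric, so along a subsequence $\mathcal{B}^N\to\mathcal{B}^\infty$ in Hausdorff distance for some closed $\mathcal{B}^\infty$. That $\mathcal{B}^\infty$ is again a cave barrier with the same $t_0$ follows because the defining properties are closed conditions: if $(x_n,t_n)\in\mathcal{B}^{N_n}$ converges to $(x,t)$ with $t<t_0$, then for large $n$ the point $(x_n,t_n)$ lies in the inverse-barrier part of $\mathcal{B}^{N_n}$, so $(x_n,s)\in\mathcal{B}^{N_n}$ eventually for each $s<t$ and hence $(x,s)\in\mathcal{B}^\infty$; symmetrically, above $t_0$ one gets $(x,s)\in\mathcal{B}^\infty$ for all $s>t$. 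Since every $\mathcal{B}^N$ contains the absorbing levels $x^N_{j^N_0},x^N_{j^N_L}$ (which tend to $x_*,x^*$) at all times, the limit contains $\{x_*,x^*\}\times[0,\infty]$, so $\tau^\infty<\infty$ almost surely.

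Next I would prove the easy half of the hitting-time convergence, $\liminf_N\tau^N\ge\tau^\infty$ a.s.: if $\tau^{N_k}(\omega)\to s<\infty$ along a subsequence then $W_{\tau^{N_k}}\to W_s$ by path continuity while $(W_{\tau^{N_k}},\tau^{N_k})\in\mathcal{B}^{N_k}$, so Hausdorff convergence forces $(W_s,s)\in\mathcal{B}^\infty$ and hence $\tau^\infty\le s$ (the case $s=\infty$ is trivial). At the same time each $\tau^N$ is dominated by the exit time of $W$ from $[x_*,x^*]$, which has moments of all orders, so $(\tau^N)_N$ is uniformly integrable.

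The substance is the matching upper bound, which I would obtain via the embedded laws. A potential-function argument --- Tanaka's formula for $\Ep{L^y_{\tau^N}}$, the convergence $\mu^N\to\mu$ (weakly, with convergent second moments), the a.s.\ inequality $\liminf_N\tau^N\ge\tau^\infty$, and Fatou applied to the continuous increasing functionals $t\mapsto L^y_t$ --- shows that $\nu:=\mathcal{L}(W_{\tau^\infty})$ satisfies $\int|z-y|\,\nu(\di z)\le\int|z-y|\,\mu(\di z)$ for all $y$, i.e.\ $\nu\preceq_{\mathrm{cx}}\mu$, and in particular $\Ep{\tau^\infty}=\int z^2\,\nu(\di z)\le\int z^2\,\mu(\di z)$. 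To upgrade this to $\nu=\mu$ I would use optimality: by \thref{greaterthan} (applied to the cave embedding of $\mu$ from \cite{Beiglboeck:2013aa}) together with the dual bound, $\mathrm{P}^N$ converges to the value of \eqref{OptSEP}, while $\mathrm{P}^N=-\Ep{\varphi(\bar{\tau}^N/N)}$; arguing as in \thref{convlemma} and \thref{convergence}, the random walk's physical stopping time, and hence $\varphi(\bar{\tau}^N/N)$, has the same limiting law as $\varphi(\tau^N)$, so passing to a further subsequence along which the stopped paths $W_{\cdot\wedge\tau^N}$ converge in law, the limit is a stopped Brownian motion that embeds $\mu$ and attains the optimal value, hence is the (unique) cave embedding of $\mu$; together with $\liminf_N\tau^N\ge\tau^\infty$ this forces $\nu=\mu$, whence $\Ep{\tau^N}\to\Ep{\tau^\infty}$. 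Finally, from $\liminf_N\tau^N\ge\tau^\infty$ and $(\tau^N-\tau^\infty)^-\le\tau^\infty\in L^1$ one gets $\Ep{(\tau^N-\tau^\infty)^-}\to0$, hence $\Ep{|\tau^N-\tau^\infty|}\to0$, so $\tau^N\to\tau^\infty$ in probability; $W_{\tau^N}\to W_{\tau^\infty}$ in probability then follows from path continuity, giving $(W_{\tau^N},\tau^N)\xrightarrow{\P}(W_{\tau^\infty},\tau^\infty)$.

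The main obstacle is exactly this ``no grazing'' step $\nu=\mu$, equivalently upper semicontinuity of the Brownian hitting time along $\mathcal{B}^N\to\mathcal{B}^\infty$: a priori the path could touch $\mathcal{B}^\infty$ at an irregular boundary point without entering it, so that $\tau^\infty$ would embed a measure strictly dominated by $\mu$; every other step is compactness or a soft limiting argument. An alternative to the optimality route above would be to prove directly that $\mathcal{B}^\infty$ is regular for $W$, using the strong Markov property together with the barrier/inverse-barrier structure, as in the classical treatments of the Root and Rost barriers.
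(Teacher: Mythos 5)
Your compactness step (Blaschke selection after compactifying time, showing the limit is a cave barrier) is sound and essentially the same device the paper uses via Root's compactified metric $d$. The easy bound $\liminf_N \tau^N \ge \tau^\infty$ a.s.\ by Hausdorff convergence and path continuity, and the uniform integrability of $(\tau^N)$, are also correct and useful observations, though the paper does not phrase things this way.

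The genuine gap is in your optimality route to $\nu=\mu$, on two counts. First, it is circular at the level of the paper's logic: you invoke ``$\mathrm{P}^N$ converges to the value of \eqref{OptSEP}'' together with \thref{greaterthan}, but \thref{greaterthan} only gives $\liminf_N \mathrm{P}^N \ge \Ep{F(W_\tau,\tau)}$; the matching upper bound is precisely \thref{primalconv}, whose proof relies on the present lemma and on \thref{rwbarrierconv}. There is no independent continuous-time ``dual bound'' established in the paper at this point. Second, and more structurally, your potential-function/Fatou argument silently identifies $\mathcal{L}(W_{\tau^N})$ with $\mu^N=\mathcal{L}(Y^N_{\bar\tau^N})$, but these are different: $\tau^N$ is a \emph{Brownian} hitting time of the continuous-time set $\mathcal{B}^N$, whereas $\bar\tau^N$ is the random-walk step at which $\hat{\mathcal{B}}^N$ is hit, and the Brownian path can enter a vertical segment of $\mathcal{B}^N$ at a physical time that is not one of the grid-exit times $\tau^N_k$. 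Reconciling the two laws is exactly the content of \thref{rwbarrierconv}, whose proof also cites the present lemma. So both inputs to your Tanaka/Fatou computation depend on the statement you are trying to prove.

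Your proposed ``alternative'' --- proving regularity of $\mathcal{B}^\infty$ directly via the barrier/inverse-barrier structure --- is in fact what the paper does: it splits $\mathcal{B}^N$ into its Root-barrier and Rost inverse-barrier pieces, invokes Root (1969) for convergence in probability of the Root hitting times under $d$-convergence of barriers, invokes the curve estimates $b_\eps\le b^N\le b^\eps$ and Girsanov comparison from Cox--Peskir (2015) for the inverse-barrier piece (after observing that the absorbing levels $x_*,x^*$ let one localise to a compact region where $d$-convergence controls the curves), and then combines by taking the minimum of the two hitting times. This avoids any appeal to optimality or to the embedded law. To repair your argument along your own lines you would need to establish the ``no grazing'' property directly from the geometry (for instance, regularity of every boundary point of a barrier/inverse-barrier for Brownian motion, as in the classical Root/Rost treatments), rather than routing through $\nu=\mu$ and the value of the problem.
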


\begin{proof}
  Define a sequence of measures $\rho_N$ on $[x_*,x^*]\times[0,\infty]$ by $\rho_N(\cdot)=\Prob{(W_{\tau^N},\tau^N)\in \cdot}$. Since $\tau^N\leq H_{x_*}\wedge H_{x^*}$ for all $N$, and $H_{x_*}\wedge H_{x^*}$ is an integrable stopping time, we know that $\forall \eps>0 \, \exists y_{\eps}$ such that $\Prob{\tau^N>y_{\eps}}<\eps \medspace \forall N$, and therefore there is a compact set $A\subseteq [x_*,x^*]\times\R_+$ such that $\rho_N(A)<\eps$ $\forall N$. In particular, the sequence $(\rho_N)$ is tight, and then by Prokhorov's theorem, there exists some $\rho_{\infty}$ such that $\rho_N\xrightarrow{w}\rho_{\infty}$ (perhaps after restricting to a suitable subsequence). What remains to show is that $\rho_{\infty}(\cdot)=\Prob{(W_{\tau^{\infty}},\tau^{\infty})\in\cdot}$, and we follow the ideas of \cite{Root:1969aa} and \cite{Cox:2015aa}.

In \cite{Root:1969aa}, Root maps the closed half plane onto a closed, bounded rectangle and defines a norm on the space of closed subsets of the half plane by $d(R,S)=\sup_{x\in R} \inf_{y\in S} r(x,y)$, where $r$ is the metric induced by taking the Euclidean metric on the rectangle. Under $d$, the space of closed subsets of our half plane is a separable, compact metric space and the space of all cave barriers is a closed subspace of the space, so is compact. We have a sequence of regions $\mathcal{B}^N$, and then by compactness they converge (possibly after taking a further subsequence) to some cave barrier $\mathcal{B}^{\infty}$ in this norm. Denote the hitting time of $\mathcal{B}^{\infty}$ by $\tau^{\infty}$.

Consider first our Root barrier, and let $\mathcal{B}^N$ now just denote the barrier part of the stopping region. By \cite{Root:1969aa} we then know that the hitting times of the $\mathcal{B}^N$ converge to the hitting time of $\mathcal{B}^{\infty}$ in probability as $N\rightarrow\infty$, and therefore $W_{\tau^N}\xrightarrow{\P} W_{\tau^{\infty}}$ also (for example by considering $\tau^N\wedge\tau^{\infty}$ and $\tau^N\vee \tau^{\infty}$).

Now we consider $\mathcal{B}^N$ to be just the Rost inverse-barrier section of our stopping region. In \cite{Cox:2015aa} the authors define the Rost inverse-barrier by curves $b:(0,\infty)\rightarrow \R\cup\{+\infty\}$ and $c:(0,\infty)\rightarrow\R\cup\{-\infty\}$ so that the Rost stopping time is $\tau_{b,c}=\inf\left\{ t>0: \medspace W_t\geq b(t)\text{ or }W_t\leq c(t) \right\}$. We can also define our atomic stopping region like this for each $N$, giving a sequence of curves $b^N$, and apply the same arguments. All we need to show is that $\forall \eps>0$, $\exists n$ such that $\forall N\geq n$, $b^{\eps}\geq b^N \geq b_{\eps}$, where $b^{\eps}(t)=b(t+\eps)+\eps$ and $b_{\eps}(t)=b(t-\eps)-\eps$ are defined in \cite{Cox:2015aa}. We know that each $\mathcal{B}^N$ has absorbing boundaries, and this must hold in the limit, i.e. $b^N(t)=b^{\infty}(t)=x^*$ $\forall t\geq K(x^*)$ and similarly for $c$, where $b^{\infty}$ is the curve associated to $\mathcal{B}^{\infty}$. This means we can just consider the part of the Rost inverse-barriers that are in the compact region to the left of the curve $K(x)$, but then in this region we have that the metric $r$ is bounded, and so for any $\delta>0$, we can find an $\eps>0$ such that $d(\mathcal{B}^N,\mathcal{B})<\delta \implies b^{\eps}\geq b^N \geq b_{\eps}$, with $d$ the above Root norm.

Then, combining the results of \cite{Root:1969aa} and \cite{Cox:2015aa} by considering the minimum of the two stopping times, we have that $(W_{\tau^N},\tau^N)\xrightarrow{\P}(W_{\tau^{\infty}},\tau^{\infty})$ as $N\rightarrow\infty$. 
\end{proof}

\begin{lemma} \thlabel{rwbarrierconv}
With the stopping times defined previously, 
\begin{equation*}
\left| \left( Y^N_{\bar{\tau}^{N}},\frac{\bar{\tau}^{N}}{N}\right) - \left( W_{\tau^N},\tau^N\right) \right| \xrightarrow{d} 0  \quad \text{as } N\rightarrow\infty.
\end{equation*}
\end{lemma}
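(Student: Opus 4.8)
The plan is to exploit that $Y^N$ is, by construction, the driving Brownian motion $W$ sampled at its successive grid–crossing times, so that $Y^N_k=W_{\tau^N_k}$ and both $\bar{\tau}^N$ and $\tau^N$ are functionals of the \emph{same} path; since convergence in distribution to the constant $0$ is the same as convergence in probability, it suffices to prove the latter. Writing $\sigma^N:=\tau^N_{\bar{\tau}^N}$ for the physical time at which $Y^N$ enters $\hat{\mathcal{B}}^N$, one has the exact identity $Y^N_{\bar{\tau}^N}=W_{\sigma^N}$, so the claim reduces to the two statements $|\sigma^N-\bar{\tau}^N/N|\xrightarrow{\P}0$ and $(W_{\sigma^N},\sigma^N)-(W_{\tau^N},\tau^N)\xrightarrow{\P}0$.

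For the first statement I would reuse the martingale estimate from the proof of \thref{convlemma}. Because the absorbing levels $x_*,x^*$ lie in every barrier $\mathcal{B}^N$ (there $\bar{l}^N_j=\bar{r}^N_j$), the walk is absorbed no later than when $W$ exits $[x_*,x^*]$, so $\bar{\tau}^N/N\le H_{x^*}\wedge H_{x_*}+o(1)$; since $H_{x^*}\wedge H_{x_*}<\infty$ a.s. this gives $\bar{\tau}^N\le n_0:=2TN^{3/2}$ with probability tending to $1$ for $T$ large, and on that event the bound $\sup_{n\le n_0}|\tau^N_n-n/N|\xrightarrow{\P}0$ proved there yields $|\sigma^N-\bar{\tau}^N/N|\le\sup_{n\le n_0}|\tau^N_n-n/N|\xrightarrow{\P}0$. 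The same estimate, with uniform continuity of $W$ on $[0,H_{x^*}\wedge H_{x_*}]$, shows that the time–rescaled sampled path converges uniformly on compacts, $\sup_{t\le T}|Y^N_{\lfloor Nt\rfloor}-W_t|=\sup_{t\le T}|W_{\tau^N_{\lfloor Nt\rfloor}}-W_t|\to0$ almost surely, which I will need below.

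For the second statement I would route through the limiting barrier of \thref{barrierconv}: since it is enough to argue along subsequences, fix one along which $\mathcal{B}^N\to\mathcal{B}^{\infty}$ in Root's metric, $\mathcal{B}^{\infty}$ is again a cave barrier, and $(W_{\tau^N},\tau^N)\xrightarrow{\P}(W_{\tau^{\infty}},\tau^{\infty})$ (keeping in mind the $N$–dependent starting point $x^N_{j^{*,N}}\to0$, handled as there); it then remains to show $\sigma^N\xrightarrow{\P}\tau^{\infty}$, after which $W_{\sigma^N}\xrightarrow{\P}W_{\tau^{\infty}}$ follows by uniform continuity of $W$. To get $\sigma^N\to\tau^{\infty}$, observe that $\hat{\mathcal{B}}^N$ read in physical time via $(x,t)\mapsto(x,\lfloor Nt\rfloor)$ differs from $\mathcal{B}^N$ only through the rounding $\lfloor Nt\rfloor$, an $o(1)$ discrepancy once rescaled; combining with $\mathcal{B}^N\to\mathcal{B}^{\infty}$ one may sandwich, for every $\eps>0$ and all large $N$, $\mathcal{B}^{\infty}_{\eps}\subseteq\hat{\mathcal{B}}^N\subseteq(\mathcal{B}^{\infty})^{\eps}$ between the $\eps$–shrunk and $\eps$–enlarged cave barriers, exactly as the curves $b_{\eps},b^{\eps}$ of \cite{Cox:2015aa} are used in the proof of \thref{barrierconv} (the absorbing boundaries letting one restrict to a compact region where Root's metric controls Euclidean distance). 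Monotonicity of hitting times in the barrier then brackets $\sigma^N$ between the hitting times, by the sampled walk $Y^N$, of these two \emph{fixed} cave barriers; since $Y^N_{\lfloor N\cdot\rfloor}\to W$ uniformly on compacts and, following \cite{Root:1969aa}, the hitting time of a fixed cave barrier is almost surely a continuous functional of the path — because $W$ a.s. hits the barrier transversally rather than grazing it or entering only at the corner $t=t_0$ — these bracketing times converge in probability to the Brownian hitting times of $(\mathcal{B}^{\infty})^{\eps}$ and $\mathcal{B}^{\infty}_{\eps}$. Letting $N\to\infty$ and then $\eps\to0$, and invoking Root's regularity once more to see $\tau_{\mathcal{B}^{\infty}_{\eps}},\tau_{(\mathcal{B}^{\infty})^{\eps}}\to\tau^{\infty}$, gives $\sigma^N\xrightarrow{\P}\tau^{\infty}$.

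The hard part is the last step: hitting times are not continuous functionals of the path in general, so the argument rests on the regularity of the cave barrier and the non-degeneracy of Brownian motion — precisely the input that makes \thref{barrierconv} work — now used to control \emph{simultaneously} the barrier perturbation $\hat{\mathcal{B}}^N\leftrightarrow\mathcal{B}^{\infty}$ and the path perturbation $Y^N_{\lfloor N\cdot\rfloor}\leftrightarrow W$. A secondary nuisance is the behaviour at the ends of the time axis — near $t=0$, where the walk starts on a grid level that is nominally a point of $\mathcal{B}^N$ for infinitesimal times, and near the corner $t=t_0$ — which concerns exceptional time–sets of length $o(1)$ and is dealt with as in \thref{barrierconv}.
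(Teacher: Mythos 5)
Your proposal takes a genuinely different route from the paper, and the high-level structure is sound, so it is worth comparing the two carefully.

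The paper's proof does \emph{not} exploit the exact pathwise identity $Y^N_k = W_{\tau^N_k}$ at this point; instead it introduces a fresh Brownian motion $B$ via Donsker's theorem (Skorokhod representation), coupled to $Y^N_{\lfloor N\cdot\rfloor}$ only in an $\eps$-sup-norm sense on compacts, and then (i) for the Rost inverse-barrier part runs a quantitative Girsanov change-of-measure argument to bound $\left|\Prob{N^{-1}\bar\tau^N\ge t}-\Prob{\tau^N\ge t}\right|$, and (ii) for the Root barrier part invokes Lemma~2.4 of \cite{Root:1969aa} together with \thref{donskconv}. Your decomposition via $\sigma^N:=\tau^N_{\bar\tau^N}$, using the exact identity $Y^N_{\bar\tau^N}=W_{\sigma^N}$, is cleaner in spirit: step (a), reusing the Doob-martingale estimate from \thref{convlemma} to get $|\sigma^N-\bar\tau^N/N|\xrightarrow{\P}0$, is correct and avoids having to re-derive Donsker. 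Your step (b) then reduces everything to a sandwiching of the random barrier between fixed $\eps$-shrunk and $\eps$-enlarged cave barriers plus a.s.\ continuity of the hitting-time functional of a \emph{fixed} cave barrier at the Brownian path. This is a valid programme, and the $\eps\to0$ limit you invoke at the end is essentially what \thref{barrierconv} supplies, while the fixed-barrier convergence is essentially \thref{donskconv}.

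Where you have a genuine gap is in the assertion, for the Rost (inverse-barrier) half of the cave barrier, that ``the hitting time of a fixed cave barrier is almost surely a continuous functional of the path.'' The hitting-time functional of a closed set is lower semi-continuous for free, but upper semi-continuity at a path $W$ requires showing that $W$ enters the interior immediately after first touching the barrier; for a Root barrier this is essentially Lemma~2.4 of \cite{Root:1969aa}, but for a Rost inverse-barrier, where the barrier lies to the \emph{left} of the boundary curves, the argument requires genuinely different input — a nearby path could slip past a tooth of the inverse-barrier that $W$ only grazes. The paper controls this quantitatively by the Girsanov estimate with drift $\pm\eps t$, giving an explicit bound $\le\delta/2 + \e^{\eps x^*}\left(1-\e^{-2\eps x^*}\right)$, which is precisely the content of the $b_\eps,b^\eps$ comparison of \cite{Cox:2015aa}. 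You flag that this is the hard part, which is good, but your reduction to ``transversality'' glosses over the asymmetry between the Root and Rost halves. You would need to either replicate the Girsanov calculation or prove directly the regularity property of Brownian motion at the left-closed boundary of an inverse-barrier (that starting $\eps$-outside the barrier one re-hits it within time $o_\eps(1)$ with probability $1-o_\eps(1)$), which is the same technical content packaged differently.

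One further small point: your sandwiching $\mathcal{B}^\infty_\eps\subseteq\hat{\mathcal{B}}^N\subseteq(\mathcal{B}^\infty)^\eps$ should be stated at the level of the defining curves $b,c$ as in \cite{Cox:2015aa} and only on the compact region to the left of the absorbing-boundary curve $K(x)$, since Root's metric $d$ only controls Euclidean distance there — the paper's proof of \thref{barrierconv} makes exactly this restriction and you should carry it over explicitly.
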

\begin{proof}
We again consider the barrier and inverse-barrier hitting times separately and use the ideas of  \cite{Root:1969aa} and \cite{Cox:2015aa}.

For the Rost inverse-barrier alone we consider the Girsanov Theorem approach of \cite{Cox:2015aa}. Fix $\delta>0$ and some $T>0$ and let $\eps=\frac{\delta}{8x^*}$. Donsker's theorem tells us that for any fixed $T$ there is a Brownian motion $B$ such that $\P\left(\sup_{0\leq t\leq T} \left|B_t-Y^N_{\lfloor Nt \rfloor}\right| >\eps\right)\rightarrow0$ as $N\rightarrow\infty$. In particular, if $A^{N,\eps}=\{\sup_{0\leq t\leq T} \left|B_t-Y^N_{\lfloor Nt \rfloor}\right| \leq\eps\}$ then $\exists N_0$ such that $N\geq N_0 \implies \P\left(\left(A^{N,\eps}\right)^C\right)<\frac{\delta}{4}$. Let $B^{\pm\eps}_t=B_t\pm \eps t$ and denote the associated hitting times of $\mathcal{B}^N$ by $\tau^{N,\pm \eps}$. The Girsanov Theorem tells us that $B^{\eps}$ is a $\Q_{\eps}$-Brownian motion, where $\frac{\di \Q_{\eps}}{\di \P}\Big|_{\F_t}=\exp\left(\eps B_t-\half \eps^2 t\right)$, and similarly for $B^{-\eps}$. Also, on $A_{N,\eps}$, we have $\tau^{N,\eps}\leq \tau^N, \frac{\bar{\tau}^N}{N}\leq \tau^{N,\eps}$. For any $t\leq T$, $N\geq N_0$,
\begin{align*}
\left| \Prob{N^{-1}\bar{\tau}^N\geq t} -\Prob{\tau^N\geq t} \right|&\leq\left| \Probc{N^{-1}\bar{\tau}^N\geq t}{A^{N,\eps}} -\Probc{N^{-1}\tau^N\geq t}{A^{N,\eps}}\right|\Prob{A^{N,\eps}} \\
& \quad + \left| \Probc{N^{-1}\bar{\tau}^N\geq t}{\left(A^{N,\eps}\right)^C} -\Probc{\tau^N\geq t}{\left(A^{N,\eps}\right)^C}\right|\Prob{\left(A^{N,\eps}\right)^C} \\
&\leq \left| \Probc{N^{-1}\bar{\tau}^N\geq t}{A^{N,\eps}} -\Probc{\tau^N\geq t}{A^{N,\eps}}\right| +2 \Prob{\left(A^{N,\eps}\right)^C} \\
&\leq \frac{\delta}{2} +\left| \Prob{\tau^{N,\eps}\geq t} -\Prob{\tau^{N,-\eps}\geq t} \right| \\
&=\frac{\delta}{2} +\left| \Eps{\eps}{\iden\{\tau^N\geq t\}} - \Eps{-\eps}{\iden\{\tau^N\geq t\}} \right| \\
&=\frac{\delta}{2} +\left| \Ep{\left(\left.\frac{\di \Q_{\eps}}{\di \P}-\frac{\di \Q_{-\eps}}{\di \P}\right)\right|_{\F_{\tau^N}}\iden\{\tau^N\geq t\}} \right| \\
&=\frac{\delta}{2} +\left| \Ep{\e^{\eps B_{\tau^N} -\frac{1}{2}\eps^2 {\tau^N}}\left(1-\e^{-2\eps B_{\tau^N}}\right)\iden\{\tau^N\geq t\}} \right| \\
&\leq \frac{\delta}{2} + \e^{\eps x^*}\left(1-\e^{-2\eps x^*}\right) \\
&\leq \frac{\delta}{2} + 2\eps x^* \\
&< \delta.
\end{align*}
It follows that $\left|\frac{\bar{\tau}^N}{N}-\tau^N\right|\xrightarrow{d}0$ as $N\rightarrow\infty$, and therefore $\left|\frac{\bar{\tau}^N}{N}-\tau^N\right|\xrightarrow{\P}0$ since the limit is a constant. Then, $\left| Y^N_{\bar{\tau}^{N}}-W_{\tau^N}\right|\xrightarrow{\P}0$ also as $N\rightarrow\infty$, and therefore for the Rost part of the stopping time we have the required convergence in probability. To see that $\left| Y^N_{\bar{\tau}^{N}}-W_{\tau^N}\right|\xrightarrow{\P}0$, note that $\left| Y^N_{\bar{\tau}^{N}}-W_{\tau^N}\right|\leq \left| Y^N_{\bar{\tau}^{N}}-W_{\frac{\bar{\tau}^{N}}{N}}\right|+\left|W_{\frac{\bar{\tau}^{N}}{N}}-W_{\tau^N}\right|$ and use Donsker's theorem on the first term and the convergence of the stopping times in the second term.

Now consider just the Root barrier and fix $\eps>0$. We repeat the proof of \cite[Lemma~2.4]{Root:1969aa} to show that for any $\eps>0$, $\exists N_0,n_0$ such that $ N\geq N_0,n\geq n_0 \implies \Prob{\left|\bar{\tau}^{N,n}-\bar{\tau}^N\right|>\eps}<\eps$ and therefore $\left|\bar{\tau}^{N}-\bar{\tau}^{N,n}\right|\xrightarrow{\P}0$ as $N\rightarrow\infty$. Fix $\eps>0$. Again by Donsker's Theorem, $\exists N_0$ such that $N\geq N_0 \implies \P\left(A^{N,\eps}\right)>1-\frac{\eps}{6}$. We can choose $\eta>0$ such that $\P\left(\sup_{\eta<t<\eps} B_t-\eps>\eta \text{ and } \inf_{\eta<t<\eps} B_t+\eps>-\eta\right)>1-\frac{\eps}{6}$, and therefore for $N\geq N_0$,
\begin{align*}
\P\left(\sup_{\eta<t<\eps} Y^N_{Nt}>\eta\sqrt{N} \text{ and } \inf_{\eta<t<\eps} Y^N_{Nt}>-\eta\sqrt{N}\right)&\geq \P\left(A^{N,\eps}\right)\P\left(\sup_{\eta<t<\eps} B_t-\eps>\eta \text{ and } \inf_{\eta<t<\eps} B_t+\eps>-\eta\right) \\
&>1-\frac{\eps}{3}.
\end{align*}
Let $\tau^{\pi}=H_{x_{j^N_0}}\wedge H_{x_{J^N_L}}$ and $\bar{\tau}^{N,\pi}=H^N_0\wedge H^N_L$. Since the hitting region with atoms only at $j=j^N_0,j^N_L$ is an example of a Rost barrier, we know from the above that $\left(Y^N_{\bar{\tau}^{N,\pi}},\frac{\bar{\tau}^{N,\pi}}{N}\right)\xrightarrow{d}\left(W_{\tau^{\pi}},\tau^{\pi}\right)$ as $N\rightarrow\infty$, and so since we are working on a bounded domain, $\Ep{(Y^N_{\bar{\tau}^{N,\pi}})^2}\rightarrow\Ep{W_{\tau^{\pi}}^2}$. Then,
\begin{equation*}
\Ep{\bar{\tau}^N}\leq \Ep{\bar{\tau}^{N,\pi}}=\Ep{(Y^N_{\bar{\tau}^{N,\pi}})^2}\rightarrow\Ep{W_{\tau^{\pi}}^2},
\end{equation*}
as $N\rightarrow\infty$, and also each $\Ep{\bar{\tau}^{N,\pi}}$ is bounded. We can therefore find a uniform bound on the $\Ep{\bar{\tau}^N}$, and in particular $\exists T$ such that, by the Markov Inequality,
\begin{equation*}
\Prob{\bar{\tau}^N\geq T}\leq \frac{\Ep{\bar{\tau}^N}}{T}<1-\frac{\eps}{3} \quad \forall N.
\end{equation*}
We can also find $N\geq N_0, n\geq n_0$ such that $d\left(\hat{\mathcal{B}}^N,\hat{\mathcal{B}}^{n}\right)<\eta$, and then can follow exactly the argument of \cite[Lemma~2.4]{Root:1969aa} to get that $\left|\bar{\tau}^{N}-\bar{\tau}^{N,n}\right|\xrightarrow{\P}0$ as $N,n\rightarrow\infty$. We can use a similar argument to the above to then show that $\left|Y^N_{\bar{\tau}^{N}}-Y^N_{\bar{\tau}^{N,n}}\right|\xrightarrow{\P}0$, and so $\left|\left(Y^N_{\bar{\tau}^{N}},\bar{\tau}^{N}\right)-\left(Y^N_{\bar{\tau}^{N,n}},\bar{\tau}^{N,n}\right)\right|\xrightarrow{\P}0$ as $N,n\rightarrow\infty$. Donsker's theorem also shows that for any $n$, $\left(\frac{\bar{\tau}^{N,n}}{N}, Y^N_{{\bar{\tau}}^{N,n}}\right)\xrightarrow{d}\left(\tau^n, W_{\tau^n}\right)$ as $N\rightarrow\infty$, and we prove this in \thref{donskconv}. Combining these results and \thref{barrierconv} gives the necessary convergence.
\end{proof}

\begin{lemma} \thlabel{donskconv}
If $\tau^n$ and $\bar{\tau}^{N,n}$ are as defined above, then 
\begin{equation*}
\left(\frac{\bar{\tau}^{N,n}}{N}, Y^N_{{\bar{\tau}}^{N,n}}\right)\xrightarrow{d}\left(\tau^n, W_{\tau^n}\right) \quad \text{as } N\rightarrow\infty.
\end{equation*}
\end{lemma}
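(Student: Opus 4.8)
The plan is to fix $n$ and identify $\bar\tau^{N,n}$ with the hitting time of the barrier $\mathcal B^n$ (rescaled) by the linearly-interpolated random walk, then appeal to Donsker's theorem together with a continuous-mapping-type argument for hitting times of barriers. First I would introduce the rescaled continuous-time process $\widetilde Y^N_t := Y^N_{\lfloor Nt\rfloor}$ (or its linear interpolation), so that Donsker's theorem gives $\widetilde Y^N \xrightarrow{d} W$ in $C[0,\infty)$ (equivalently in the Skorokhod topology on a bounded time interval, after the usual tightness/truncation since $\tau^n \le H_{x_*}\wedge H_{x^*}$ which is a.s.\ finite). By Skorokhod's representation theorem I may assume this convergence is almost sure, working on a common probability space. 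The key observation is that $\bar\tau^{N,n}/N$ is (up to an error of order $1/N$, coming from the difference between $\bar\tau^{N,n}$ and $\tau^N_{\bar\tau^{N,n}}$, exactly as in the bound $|W_\tau - Y^N_{\tilde\tau^N}| < 2/\sqrt N$ used in \thref{convlemma}) the hitting time of the fixed region $\mathcal B^n$ by $\widetilde Y^N$, since $\hat{\mathcal B}^n$ is defined through the grid levels $x^N_j$ but the \emph{shape} (the times $\bar l^n_j/N$, $\bar r^n_j/N$) is frozen once $n$ is fixed.

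The second step is the hitting-time continuity. Since $\mathcal B^n$ is a cave barrier — a Root barrier glued to a Rost inverse-barrier at $t_0$ — I would split $\bar\tau^{N,n} = \bar\tau^{N,n}_{\mathrm{Root}} \wedge \bar\tau^{N,n}_{\mathrm{Rost}}$ and treat the two pieces by the two mechanisms already invoked in \thref{barrierconv} and \thref{rwbarrierconv}: for the Root part, Root's original argument (\cite{Root:1969aa}) shows that for a fixed regular barrier the hitting time is a.s.\ continuous functional of the path, so $\bar\tau^{N,n}_{\mathrm{Root}}/N \to \tau^n_{\mathrm{Root}}$ a.s.\ on the Skorokhod space; for the Rost inverse-barrier part, the Girsanov/curve-sandwiching estimates of \cite{Cox:2015aa} (reproduced in the proof of \thref{rwbarrierconv}) give the analogous convergence. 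The only subtlety is the possible discontinuity of the first-hitting map at paths that merely graze the barrier boundary; as in \cite{Root:1969aa} one argues that Brownian motion hits the (closed, regular) barrier set with positive probability arbitrarily soon after first touching it, so the bad set of paths is $W$-null, and the a.s.\ convergence survives.

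Once $\bar\tau^{N,n}/N \to \tau^n$ a.s.\ (on the Skorokhod-representation space), the spatial coordinate follows from the path convergence: $|Y^N_{\bar\tau^{N,n}} - W_{\tau^n}| \le |\widetilde Y^N_{\bar\tau^{N,n}/N} - W_{\bar\tau^{N,n}/N}| + |W_{\bar\tau^{N,n}/N} - W_{\tau^n}|$, where the first term is controlled by uniform (on compacts) path convergence from Donsker plus the $O(1/\sqrt N)$ grid error, and the second by continuity of $t\mapsto W_t$ together with the time convergence just established. Hence $(\bar\tau^{N,n}/N,\, Y^N_{\bar\tau^{N,n}}) \to (\tau^n,\, W_{\tau^n})$ a.s.\ on the representation space, which gives the claimed convergence in distribution on the original space. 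I expect the main obstacle to be making the ``hitting time of a fixed cave barrier is a.s.\ continuous under Skorokhod convergence'' step fully rigorous at the Root--Rost junction $t_0$ and near the absorbing boundaries $x_*, x^*$ — i.e.\ checking that the grazing/irregular paths form a null set for $W$ and that the interpolation error does not create spurious early or late hits — but all the needed ingredients are exactly those already deployed in the proofs of \thref{barrierconv} and \thref{rwbarrierconv}, so this is assembly rather than new work.
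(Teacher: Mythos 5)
Your route is correct in spirit but genuinely different from the paper's. You invoke the Skorokhod representation theorem to upgrade Donsker's convergence to a.s.\ convergence of paths, treat the first-hitting map of the fixed cave barrier $\mathcal{B}^n$ as an a.s.\ continuous functional of the path (splitting into the Root and Rost pieces and citing \cite{Root:1969aa}, \cite{Cox:2015aa} for the respective continuity statements), and close with the continuous mapping theorem. The paper instead stays entirely with convergence in distribution and applies the Portmanteau theorem directly: for each atom $(t,x^n_i)$ of $\mathcal{B}^n$ it constructs by hand the Borel set $\mathcal{K}$ of continuous paths that first hit $\mathcal{B}^n$ inside a small time-window $\bar B(\eps)$ around that atom --- building $\mathcal{K}$ from countable unions and intersections of cylinder-type sets $\mathcal{K}^{\eps}_q, \mathcal{K}^{\eps,\delta}_q, \mathcal{K}^y, \mathcal{K}^z, \mathcal{K}^1$ --- and then verifies directly that $\Prob{W\in\partial\mathcal{K}}=0$, the key point being that a Brownian path touching an atom of the boundary a.s.\ passes through it rather than grazing. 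Both arguments rest on the same probabilistic fact (grazing paths form a $W$-null set), packaged once as ``a.s.\ continuity of the hitting functional'' and once as ``$W$-null boundary of the hitting event.'' Your route is more modular and re-uses machinery cited elsewhere in the paper; the paper's route avoids the Skorokhod representation altogether and exploits the finite atomic structure of $\hat{\mathcal{B}}^n$ explicitly, which is precisely where the case-checking you acknowledge but defer (the Root--Rost junction, the absorbing boundaries, the interpolation error) actually gets carried out. One small caution: your statement that the needed ingredients are ``exactly those already deployed in the proofs of \thref{barrierconv} and \thref{rwbarrierconv}'' is mildly circular, since the paper's proof of \thref{rwbarrierconv} invokes the present lemma as its final step; what you really mean is that the \emph{techniques} are shared, which is accurate.
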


\begin{proof}
By the choice of our discretisation, we know by Donsker's Theorem that $(Y^N_{\lfloor Nt \rfloor}; t\leq T)\xrightarrow{d} (W_t;t\leq T)$ as $N\rightarrow \infty$ for any $T>0$, and then by the Portmanteau Theorem,
\begin{equation*}
\lim_{N\rightarrow\infty} \Prob{(Y^N_{Nt})_{t\leq T}\in \mathcal{K}}=\Prob{(W_t)_{t\leq T}\in \mathcal{K}} \quad \text{for all Borel } \mathcal{K} \text{ with } \Prob{X\in \partial\mathcal{K}}=0,
\end{equation*}
where we consider $\mathcal{K}$ to be a subset of the set of continuous paths $f\in \mathcal{C}[0,T]$. 

Fix $n$ and consider $Y^N_{\bar{\tau}^{N,n}}$ and $W_{\tau^n}$ for $N\geq n$. Take some point $(t, x_i^n)\in\mathcal{B}^n$ and fix a closed interval, $\bar{B}(\eps)=\{x_i^n\}\times[t-\gamma,t+\gamma]$, of width $\gamma<n^{-1}$ around this point. We show that the set of continuous paths which hit $\mathcal{B}^n$ for the first time in $\bar{B}(\eps)$ is a Borel set. Note that since we are working with discrete barriers, there is a smallest $y>x_i^n$ such that $\left(\{y\}\times[t-\gamma,t+\gamma]\right)\cap \mathcal{B}^n\neq \emptyset$, and also a largest $z<x_i^n$ satisfying the same property. Now consider the sets
\begin{alignat*}{3}
&\mathcal{K}^{\eps}_q&&:=\left\{ f\in \mathcal{C}[0,T]:\medspace f(s)<y-\eps \medspace \forall s\in[t-\gamma,q]\cap\Q\right\}, &\\
&\mathcal{K}^{\eps,\delta}_q&&:=\left\{ f\in \mathcal{C}[0,T]:\medspace f(q)<x_i^n+\delta\right\}\cap\mathcal{K}^{\eps}_q, \\
&\mathcal{K}^{\eps,\delta}&&:=\bigcup_{\substack{q\in[t-\gamma,t+\gamma] \\ q\in\Q}} \mathcal{K}^{\eps,\delta}_q, \\
&\mathcal{K}^y&&:=\bigcap_{\substack{\delta>0 \\ \delta\in\Q}} \bigcup_{\substack{\eps>0 \\ \eps\in\Q}} \mathcal{K}^{\eps,\delta}.
\end{alignat*}
Then, since $\mathcal{K}^{\eps}_q,\mathcal{K}^{\eps,\delta}_q$ are Borel, $\mathcal{K}^y$ is also. Similarly we can define the above when considering $z$ instead of $y$, with the opposite inequalities, and we would find that $\mathcal{K}^z$ is Borel. Since our barrier is a closed region, $\mathcal{K}^1:=\left\{ f\in \mathcal{C}[0,T]:\medspace f \text{ doesn't hit $\mathcal{B}^n$ before time } t-\gamma \right\}$ is open in $\mathcal{C}[0,T]$, and therefore $\mathcal{K}:=\mathcal{K}^1\cap\left(\mathcal{K}^y \cup \mathcal{K}^z\right)$ is a Borel set. But $\mathcal{K}$ is exactly the set of paths which hit $\mathcal{B}^n$ for the first time in $\bar{B}(\eps)$.

Now, $\partial\mathcal{K}$ is the set of paths which start at $0$ and either hit $\bar{B}(\eps)$ at times $t\pm \gamma$, or hit $\bar{B}(\eps)$ anywhere but also hit $\mathcal{B}^n$ elsewhere first without passing through any atoms of $\mathcal{B}^n$. The probability a Brownian path hits at $t\pm \gamma$ or one of the finite number of end points of the atoms is 0, but if it touches an atom of the boundary elsewhere then it will almost surely pass through the atom. Therefore, $\Prob{W\in\partial\mathcal{K}}=0$, so the Portmanteau Theorem applies. By the definition of $\mathcal{K}$, and since $\gamma,i$ were arbitrary, 
\begin{equation*}
\left(\frac{\bar{\tau}^{N,n}}{N}, Y^N_{\bar{\tau}^{N,n}}\right)\xrightarrow{d}\left(\tau^n, W_{\tau^n}\right) \quad \text{as } N\rightarrow\infty, \text{ for any }n.
\end{equation*}
\end{proof}

\begin{theorem} \thlabel{primalconv}
If $\mathrm{P}^N$ is our discrete optimal value, and $\tau$ is the optimal continuous time stopping time, then
\begin{equation*}
\mathrm{P}^N\rightarrow \Ep{F\left(W_{\tau},\tau\right)}, \quad \text{as }N\rightarrow\infty.
\end{equation*}
\end{theorem}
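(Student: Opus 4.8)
The plan is to establish the two bounds $\liminf_{N\rightarrow\infty}\mathrm{P}^N\geq\Ep{F(W_{\tau},\tau)}$ and $\limsup_{N\rightarrow\infty}\mathrm{P}^N\leq\Ep{F(W_{\tau},\tau)}$ separately. The lower bound is essentially already in hand: by \thref{greaterthan}, discretising the optimal continuous-time stopping time $\tau$ gives feasible $p^{\tau,N}$ with $\mathrm{P}^N\geq\sum_{j,t}\bar{F}^N_{j,t}q^{\tau,N}_{j,t}$, and \thref{convergence}, applied with $F$ bounded, shows the right-hand side converges to $\Ep{F(W_{\tau},\tau)}$.

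For the upper bound I would argue along subsequences using the optimal primal solutions and the barrier structure. For each $N$, \thref{caveshape} supplies an optimiser $p^{*,N}$ supported on a discrete cave barrier $\hat{\mathcal{B}}^N$, and \thref{Rostembed} gives $Y^N_{\bar{\tau}^N}\sim\mu^N$ for $\bar{\tau}^N$ the hitting time of $\hat{\mathcal{B}}^N$; moreover the optimal value equals $\mathrm{P}^N=\Ep{\bar{F}^N\left(\sqrt{N}Y^N_{\bar{\tau}^N},\bar{\tau}^N\right)}$. Given any subsequence along which $\mathrm{P}^N$ tends to $\limsup_{N\rightarrow\infty}\mathrm{P}^N$, I would pass to a further subsequence along which, by the compactness of the space of cave barriers in Root's metric used in \thref{barrierconv}, $\mathcal{B}^N\rightarrow\mathcal{B}^{\infty}$ for some cave barrier $\mathcal{B}^{\infty}$ with Brownian hitting time $\tau^{\infty}$. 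Along this subsequence, combining \thref{barrierconv} with \thref{rwbarrierconv} (and \thref{donskconv}) yields $\left(Y^N_{\bar{\tau}^N},\frac{\bar{\tau}^N}{N}\right)\xrightarrow{d}(W_{\tau^{\infty}},\tau^{\infty})$. Then, estimating $\left|\bar{F}^N(\sqrt{N}Y^N_{\bar{\tau}^N},\bar{\tau}^N)-F(Y^N_{\bar{\tau}^N},\frac{\bar{\tau}^N}{N})\right|$ by the uniform convergence $\bar{F}^N(\lfloor\sqrt{N}x\rfloor,\lfloor Nt\rfloor)\rightarrow F(x,t)$ exactly as in the proof of \thref{convergence}, and invoking boundedness of $F$ to pass from convergence in distribution to convergence of expectations, gives $\mathrm{P}^N\rightarrow\Ep{F(W_{\tau^{\infty}},\tau^{\infty})}$ along the subsequence.

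It then suffices to show $\Ep{F(W_{\tau^{\infty}},\tau^{\infty})}\leq\Ep{F(W_{\tau},\tau)}$, as this forces $\limsup_{N\rightarrow\infty}\mathrm{P}^N\leq\Ep{F(W_{\tau},\tau)}$. For this I would check that $\tau^{\infty}$ is feasible for \eqref{OptSEP}: it is the hitting time of a cave barrier, and $W_{\tau^{\infty}}\sim\mu$ because $\mathcal{L}(Y^N_{\bar{\tau}^N})=\mu^N\rightarrow\mu$, $\left|\left(Y^N_{\bar{\tau}^N},\frac{\bar{\tau}^N}{N}\right)-(W_{\tau^N},\tau^N)\right|\xrightarrow{d}0$ and $(W_{\tau^N},\tau^N)\xrightarrow{\P}(W_{\tau^{\infty}},\tau^{\infty})$ by \thref{rwbarrierconv} and \thref{barrierconv}; since $\tau^{\infty}\leq H_{x_*}\wedge H_{x^*}$ the embedding is automatically uniformly integrable. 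Optimality of $\tau$ then gives $\Ep{F(W_{\tau^{\infty}},\tau^{\infty})}\leq\Ep{F(W_{\tau},\tau)}$, and combining with the lower bound completes the proof.

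I expect the main obstacle to be the composition of limits in the upper-bound step: chaining \thref{barrierconv}, \thref{rwbarrierconv} and \thref{donskconv} so that ``random walk stopped at the discrete barrier $\hat{\mathcal{B}}^N$'' converges to ``Brownian motion stopped at the limiting continuous barrier $\mathcal{B}^{\infty}$'' — rather than merely to Brownian motion stopped at each fixed barrier — while simultaneously controlling the $\bar{F}^N$-versus-$F$ discrepancy and keeping the double subsequence extraction honest (first for $\limsup$, then for barrier convergence), since \thref{barrierconv} only provides barrier convergence along a subsequence. The boundedness hypothesis on $F$ is what makes the final passage from weak convergence to convergence of the objective values legitimate.
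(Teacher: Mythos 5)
Your proposal is correct and follows essentially the same route as the paper: the lower bound comes from discretising the optimal $\tau$ via Remark \thref{greaterthan} and Theorem \thref{convergence}, and the upper bound chains Lemmas \thref{rwbarrierconv} and \thref{barrierconv} (with \thref{donskconv}) to obtain $\mathrm{P}^N\rightarrow\Ep{F(W_{\tau^{\infty}},\tau^{\infty})}\leq\Ep{F(W_{\tau},\tau)}$, using boundedness of $F$ to pass from weak convergence to convergence of expectations and the fact that $\tau^{\infty}$ embeds $\mu$. The only difference is cosmetic: you make explicit the double subsequence extraction required because Lemma \thref{barrierconv} only yields barrier convergence along a subsequence, a point the paper leaves implicit.
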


\begin{proof}
We know from our choice of $\bar{F}$ that $\| \bar{F}^N(x,Nt)-F(x,t) \|_{\infty}\rightarrow 0$ as $N\rightarrow\infty$. Then by \thref{rwbarrierconv} and the boundedness of $F$, we have
\begin{align*}
\Ep{\left|\bar{F}^N\left(\sqrt{N}Y^N_{\bar{\tau}^N},\bar{\tau}^N\right)-F\left(W_{\tau^N},\tau^N\right)\right|}&\leq\Ep{\| \bar{F}^N(x,Nt)-F(x,t) \|_{\infty}}\\
&\qquad\qquad\qquad\qquad + \Ep{\left|F\left(Y^N_{\bar{\tau}},\frac{\bar{\tau}^N}{N}\right)-F\left(W_{\tau^N},\tau^N\right)\right|} \\
&\rightarrow 0, \quad \text{as } N\rightarrow\infty.
\end{align*}
Also, \thref{barrierconv} shows that $(W_{\tau^N},\tau^N)\xrightarrow{\P}(W_{\tau^{\infty}},\tau^{\infty})$ as $N\rightarrow\infty$ and so, since $F$ is bounded,
\begin{equation*}
\Ep{F\left(W_{\tau^N},\tau^N\right)}\rightarrow\Ep{F\left(W_{\tau^{\infty}},\tau^{\infty}\right)} \quad \text{as } N\rightarrow\infty,
\end{equation*}
where $\tau^{\infty}$ is the hitting time of a cave barrier such that $W_{\tau^{\infty}}\sim\mu$, since $\mathcal{L}\left(W_{\tau^{\infty}}\right)=\lim_{N\rightarrow\infty}\mathcal{L}\left(Y^N_{\bar{\tau}^N}\right)\\=\lim_{N\rightarrow\infty}\mu^N=\mu$. Then, by the optimality of $\tau$, combining these results gives
\begin{equation*}
\mathrm{P}^N=\Ep{\bar{F}^N\left(\sqrt{N}Y^N_{\bar{\tau}^N},\bar{\tau}^N\right)}\rightarrow \Ep{F\left(W_{\tau^{\infty}},\tau^{\infty}\right)}\leq \Ep{F\left(W_{\tau},\tau\right)}.
\end{equation*}
\thref{convergence} gives the other inequality, as mentioned in \thref{greaterthan}.
\end{proof}

\section{Conclusions}
\thref{barrierconv} and \thref{primalconv} recover the cave embedding result proved using the monotonicity principle in \cite{Beiglboeck:2013aa}, that is, there exists a cave barrier such that the hitting time of that barrier minimises $\Ep{\varphi(\sigma)}$ over stopping times $\sigma$ such that $W_{\sigma}\sim\mu$. In addition, these results characterise these boundaries/stopping times as the limits of solutions to a discrete problem. The equivalent results hold in the cases of the Root and Rost embeddings, and also in the case of $K$-cave embeddings from \cite{Cox:2016aa}.

As well as this approach being a novel way of reproving the existence of these embeddings, it can also be used to derive properties of the continuous time problem which are not easily deduced otherwise. For example, our principal motivation for this work was to establish the form of the optimal superhedging portfolio of a European call option on a leveraged exchange traded fund, and without the work here it is not clear that such an optimal portfolio exists. In Section \ref{dualsection} we give an indication of how our dual optimisers $(\eta^{*,N},\nu^{*,N})$ converge to functions with which we can superhedge our payoff $F$, and this is formalised in \cite[Section~5.1]{Cox:2016aa}. Optimal superhedging portfolios are given for Root and Rost-type payoffs in \cite{Cox:2013ab} and \cite{Cox:2013ac} respectively, and we can also use this approach to recover those functions.

The discrete setup of this problem is robust in the sense that we can change the problem somewhat and hope to still prove strong duality and derive properties of the associated continuous time problem. By changing the conditions at $t=1$ in $\mathcal{P}^N$ we can consider the problem where our random walk starts according to some more general initial distribution. The strong duality and convergence results above will still hold provided we choose a starting measure (for the Brownian motion) $\mu_0$ such that $\mu_0\leq\mu$ in convex order, and choose the discretisations $\mu_0^N$ carefully.

Here we consider the case where the full distribution of the process at the terminal time is known, but this approach could give more insight into the problem where the potential of the terminal distribution is only given at finitely many points.

In the financial problem of finding robust bounds on the price of some exotic option, the Skorokhod embedding problem arises as a consequence of the Breeden-Litzenberger formula of \cite{Breeden:1978aa}. This result allows us to calculate the terminal distribution of a price process if we can observe the prices of European call options on this process at some fixed terminal time for all possible strikes. A more realistic assumption is that we can observe the call prices at finitely many strikes, and then we can only calculate the potential of the terminal distribution at these points.

\nocite{Morters:2010aa}
\nocite{Noschese:2013aa}
\nocite{Hobson:1998aa}
\bibliography{LETFnew.bib}
\bibliographystyle{abbrv}
\end{document}